\newtheorem*{rep@theorem}{\rep@title}
\newcommand{\newreptheorem}[2]{%
\newenvironment{rep#1}[1]{%
 \def\rep@title{#2 \ref{##1}}%
 \begin{rep@theorem}}%
 {\end{rep@theorem}}}
\newtheorem{theorem}{Theorem}
\newtheorem{lemma}[theorem]{Lemma}
\def\Lcal{{\mathcal{L}}}
\newcommand{\E}{\mathbb{E}}
\newcommand{\R}{\mathbb{R}}
\newcommand{\cL}{\mathcal{L}}
\newcommand{\Zeta}{Z}
\newcommand{\dist}{\mathsf{d}}
\newtheorem{corollary}[theorem]{Corollary}
\newtheorem{fact}[theorem]{Fact}
\theoremstyle{definition}
\newtheorem{remark}[theorem]{Remark}
\numberwithin{equation}{section}
\numberwithin{theorem}{section}
\title{Complexity of Single Loop Algorithms for Nonlinear Programming with Stochastic Objective and Constraints}
\date{}
\author{Ahmet Alacaoglu\footnote{Wisconsin Institute for Discovery, University of Wisconsin--Madison, \url{alacaoglu@wisc.edu}} \and Stephen J. Wright\footnote{Department of Computer Sciences, University of Wisconsin--Madison, \url{swright@cs.wisc.edu}}}
\begin{document}

\maketitle
\begin{abstract}
We analyze the complexity of single-loop quadratic penalty and augmented Lagrangian algorithms for solving nonconvex optimization problems with functional equality constraints. 
We consider three cases, in all of which the objective is stochastic and smooth, that is, an expectation over an unknown distribution that is accessed by sampling.
The nature of the equality constraints differs among the three cases: deterministic and linear in the first case, deterministic, smooth and nonlinear in the second case, and stochastic, smooth and nonlinear in the third case.
Variance reduction techniques are used to improve the complexity.
To find a point that satisfies $\varepsilon$-approximate first-order conditions, we require $\widetilde{O}(\varepsilon^{-3})$ complexity in the first case, $\widetilde{O}(\varepsilon^{-4})$ in the second case, and $\widetilde{O}(\varepsilon^{-5})$ in the third case. 
For the first and third cases, they are the first algorithms of ``single loop'' type (that also use $O(1)$ samples at each iteration) that still achieve the best-known complexity guarantees.
\end{abstract}

\section{Introduction}\label{sec: intro}
Augmented Lagrangian and quadratic penalty algorithms have been a mainstay for solving nonlinear optimization problems for several decades~\citep{hestenes1969multiplier,powell1969method,fiacco1968nonlinear,bertsekas2014constrained}. 
We consider first the following  nonlinear programming template:
\begin{equation}\label{eq: prob_gen1}
\min_{x\in X} f(x)\text{~subject to~} c(x) = 0,
\end{equation}
where $f:\R^d\to\R$ and $c:\R^d \to \R^m$.
Historically, algorithms for \eqref{eq: prob_gen1} are analyzed for the case of nonlinear and nonconvex $f$ and $c$. 
Typical results show \emph{asymptotic} iterate convergence with \emph{local linear} or \emph{superlinear} rate guarantees. 
In the last two decades, with the influence of the emerging field of data science, there has been wider interest in global convergence rates, including \emph{sublinear} rates.
Nonasymptotic convergence rate analyses of augmented Lagrangian method (ALM) and quadratic penalty method (QPM) for nonlinear programming in both convex~\citep{lan2013iteration,lan2016iteration,xu2017accelerated,xu2021iteration} and nonconvex cases~\citep{hong2016decomposing,xie2021complexity,li2021rate,lin2022complexity,lu2022single,huang2023single,kong2023iteration} are surprisingly recent.

With large data sets fueling many recent advances in machine learning and data sciences, \emph{stochastic} algorithms for solving \eqref{eq: prob_gen1} have become a necessity. 
These algorithms generally work with a single-sample or a mini-batch of the full dataset at each iteration. 
In many applications, even one pass over the data can be prohibitive. 
\emph{Unconstrained} and \emph{simple-constrained} versions of~\eqref{eq: prob_gen1}, where $c(x)$ is absent, have been solved with  stochastic \emph{projected} gradient algorithms for convex or nonconvex $f$~\citep{lan2020first,davis2019stochastic,cutkosky2019momentum}. 
In this paper, we focus on three subclasses of~\eqref{eq: prob_gen1} in which $c(x)$ is nontrivial, so that projection onto the feasible set of \eqref{eq: prob_gen1} is too expensive to be practical.

In all problems considered in this paper, the objective $f$ has expectation form, so we restate \eqref{eq: prob_gen1} more narrowly as follows:
\begin{equation} \label{eq:gen}
\min_{x\in X} \big\{ f(x):= \mathbb{E}_{\xi}[\tilde{f}(x, \xi)]\big\} \text{~subject to~} c(x) = 0,
\end{equation}
where $f\colon \mathbb{R}^d \to \mathbb{R}$ and $c\colon \mathbb{R}^d \to \mathbb{R}^m$ are functions with Lipschitz continuous gradients that can be nonconvex, while the set $X \subseteq \mathbb{R}^d$ is closed and convex. The function $\tilde{f}$ maps $\R^d \times \Xi$ to $\R$, where $\Xi$ is the sample space with $\xi \in \Xi$ and $\xi$ is distributed according to $P_{\Xi}$.
$\E_\xi$ is the expectation over the distribution of $\xi$. 
(We sometimes abbreviate $\E_{\xi}$ as $\E$ when the context is clear.)

\paragraph{Three cases. } We consider three instances of the template \eqref{eq:gen}, motivated by applications in machine learning and data science.

\begin{enumerate}
\item[I.] Let $c(x):= Ax-b$ and $X = \mathbb{R}^d$, yielding
\begin{equation}
\label{eq:I} \tag{I}
\begin{aligned} 
\min_{x\in \mathbb{R}^d} \big\{ f(x):= \mathbb{E}[\tilde{f}(x, \xi)]\big\} 
\text{~subject to~} Ax=b,
\end{aligned}
\end{equation}
where $A \in \R^{m \times d}$.
This problem arises in the context of distributed optimization where the linear constraints enforce consensus~\citep{hong2018gradient,hong2016decomposing}.
It also arises in resource allocation~\cite[Section~7.3]{boyd2011distributed}, reformulations of problems involving the composition of convex or nonconvex functions with linear operators, and other contexts (see also \cite[Section 1.1]{hong2016decomposing}).
Oracle accesses in this case require stochastic gradients of $f$ and matrix multiplications by $A$ and  $A^\top$.
We denote by $\delta > 0$ the smallest nonzero eigenvalue of $A^\top A$, so that \begin{equation} \label{eq:def.delta}
\|A^\top \lambda \| \ge \sqrt\delta \| \lambda \| \quad \mbox{for all $\lambda \in \mbox{Range}(A)$.}
\end{equation}
\item[II.] 
In the second case, $c: \mathbb{R}^d \to \mathbb{R}^m$ is a deterministic nonlinear function, yielding
\begin{equation} 
\label{eq:II} \tag{II}
\begin{aligned}
\min_{x\in X} \big\{ f(x):= \mathbb{E}[\tilde{f}(x, \xi)]\big\} 
\text{~subject to~} c(x) = 0,
\end{aligned}
\end{equation}
with $X \subseteq \mathbb{R}^d$ a closed, convex set.
Problems of this type arise in optimization problems with partial differential equation (PDE) constraints~\citep{kupfer1992numerical, rees2010optimal,curtis2021worst}.
The oracle access requires stochastic gradients of $f$, and evaluations of constraint function and gradient, $c$ and $\nabla c$.
\item[III.]
The third problem has the constraint $c$ as a nonlinear function defined as an expectation
\begin{equation} \label{eq:III} \tag{III}
\begin{aligned}
\min_{x\in X} \big\{ f(x):= \mathbb{E}_\xi[\tilde{f}(x, \xi)]\big\}, \text{subject to~} c(x) = \mathbb{E}_{\zeta}[\tilde{c}(x, \zeta)] = 0,
\end{aligned}
\end{equation}
where $\tilde{c}:\R^d \times \Zeta \to \R^m$, with $\Zeta$ being the sample space and $X\subseteq \mathbb{R}^d$ is closed and convex.
The oracle access requires stochastic gradients and stochastic function evaluations for both objective and constraint. 
This problem is motivated by recent applications of neural network training with output constraints, in such tasks as out-of-distribution detection or fair machine learning~\citep{katz2022training, liu2020chance, dener2020training, zafar2019fairness}.
\end{enumerate}
Problems \eqref{eq:I}, \eqref{eq:II}, \eqref{eq:III} are studied in Sections~\ref{sec: main_linear}, \ref{subsec: main_det_nonlinear}, and \ref{sec: qp_stoc_const}, respectively.
Inequality constraints can be accommodated into \eqref{eq:II} and \eqref{eq:III} via the use of slack variables, which can be constrained to be nonnegative by membership in an appropriately defined set $X$. 

\paragraph{First-order stationarity.} We say that $\bar x$ is {\em $\varepsilon$-stationary} for~\eqref{eq:gen} if there exists $\bar \lambda\in\mathbb{R}^m$ such that
\begin{equation}\label{eq:1o.eps}
\begin{aligned}
\dist(\nabla f(\bar x) + \nabla c(\bar x)^\top \bar\lambda, -N_X(\bar x)) &\leq \varepsilon, \\ \| c(\bar x) \| &\leq \varepsilon,
\end{aligned}
\end{equation}
where $\dist(x, C) =\min_{y\in C}\|x-y\|$ is the distance function and $N_X(\bar{x})$ is the normal cone to $X$ at $\bar{x}$. This is the same first-order stationarity definition as in~\cite{sahin2019inexact,li2021rate,lin2022complexity,li2023stochastic} and generalizes~\cite{xie2021complexity} to the case when $X$ is present in the problem formulation.

We say that $\bar x$ is an {\em $\varepsilon$-stationary point in expectation} if~\eqref{eq:1o.eps} holds in expectation.

\paragraph{Augmented Lagrangian and Quadratic Penalty.} Augmented Lagrangian methods were proposed to overcome both the theoretical and practical drawbacks of quadratic penalty (QP) approaches \citep{hestenes1969multiplier}. 
Instances of ALM are traditionally equipped with stronger guarantees than QPM; they incorporate dual updates that help with feasibility guarantees and subproblem conditioning \citep{bertsekas2014constrained}. 
However, for existing nonasymptotic analyses in the nonconvex cases, the literature does not reflect these advantages. 
In the deterministic case, the existing analyses for ALM with best-known guarantees require the penalty parameters to increase rapidly to infinity, so the dual step size effectively needs to decay~\citep{sahin2019inexact,li2021rate}. 
The only work to our knowledge with a constant dual step size and penalty parameter is~\cite{xie2021complexity}, which has worse complexity than the methods with growing penalty parameters and decaying dual step sizes.

Since constant penalty parameter and constant dual step sizes are the main features of ALM, we focus on a version of ALM for problem \eqref{eq:I} that uses constant penalty parameters and constant dual step sizes. 
For the more general problems, we focus on QP-based algorithms and their extensions to ALM-type algorithms with small dual step sizes.

\paragraph{Oracle model.} 
We assume throughout to have access to an unbiased oracle for gradients, a standard setting used for example in  \citep{arjevani2022lower,cutkosky2019momentum}. That is, there exists $\tilde\nabla f(x, \xi)$ such that
\begin{equation}\label{eq: oracle}
\begin{aligned}
    \mathbb{E}_\xi[\tilde \nabla f(x, \xi)] &= \nabla f(x), \\
    \mathbb{E}_\xi\| \tilde \nabla f(x, \xi) - \tilde \nabla f(y, \xi) \|^2 &\leq L^2 \| x-y\|^2.
    \end{aligned}
\end{equation}

\paragraph{Algorithmic Approaches and Contributions.}

We handle the functional constraints via two classical approaches: quadratic penalty and augmented Lagrangian~\citep{bertsekas2014constrained,numopt2}. 
These algorithmic frameworks normally require solution of subproblems at every iteration. 
Instead of solving these subproblems exactly, we perform one stochastic gradient descent step on each subproblem, yielding an overall approach that is single-loop in nature. 
This technique is also known as \emph{linearization} in the context of ALM and QPM~\citep{ouyang2015accelerated}. 
To obtain improved sample complexity guarantees, we also use variance reduction techniques (see~\cite{cutkosky2019momentum}). Our main aim in the paper is to provide \emph{simple} and \emph{easily implementable} single-loop algorithms for the described problem classes with optimal or best-known complexity results.\footnote{Such a goal is nontrivial and not always achievable. See e.g., \cite{ji2022will} and \cite{zhang2020single} for different settings where one currently needs multiple loops for best complexity and research for single-loop methods is active.}

In the case of linear constraints studied in Section~\ref{sec: main_linear}, we use constant penalty parameter and constant dual step sizes for an ALM with variance reduction. In this case, we show the complexity $\widetilde{O}(\varepsilon^{-3})$ which is optimal (up to a log factor) even for unconstrained, smooth, stochastic optimization~\citep{arjevani2022lower}.

With functional constraints,  consistent with the literature on deterministic instances of our template, we use increasing penalty parameters with quadratic penalty (and decreasing dual step sizes with ALM in Section~\ref{subsec: main_alm_stoc}). We show $\widetilde{O}(\varepsilon^{-4})$ complexity with deterministic constraints in Section~\ref{subsec: main_det_nonlinear} and  $\widetilde{O}(\varepsilon^{-5})$ complexity with stochastic constraints in Section~\ref{sec: qp_stoc_const}.

Besides being single-loop and requiring only a single sample at each iteration, each iteration of our algorithms requires only simple projections and simple vector operations.
We do not require complicated auxiliary subproblems to be solved.
As a consequence, our sample complexity and computational complexity results are essentially the same.

\subsection{Related Works}\label{subsec: relworks}

Algorithms for nonlinear programming have been studied for many decades, but recent years have seen more focus on the case of functions defined as expectations.
There is focus also on algorithms that identify an approximate solution in some finite time, expressed in terms of a parameter $\varepsilon > 0$ that quantifies the inexactness in the solution of the problem.
For ease of presentation we discuss the related works separately for each of our three special cases.

\paragraph{Problem \eqref{eq:I}: Nonconvex stochastic optimization with linear constraints. } 
Complexity of ALM in the  case of deterministic $f$ is studied in many  works; see for example \cite{zhang2020proximal,zhang2022global,hong2016decomposing,hong2018gradient}.
A complexity of $O(\varepsilon^{-2})$ is typical for identifying a point $\bar{x}$ that satisfies first-order conditions $\varepsilon$-approximately in the sense of \eqref{eq:1o.eps}, for some $\bar\lambda$.
Among the works mentioned, \cite{hong2016decomposing,hong2018gradient} focus on the unconstrained case and~\cite{zhang2020proximal,zhang2022global,zhang2022iteration} focus on the case in which additional polyhedral constraints (or more general nonlinear functional constraints with further assumptions) are present,  requiring error bounds to estimate distances to the optimal set.
One feature of the methods in these works is that both the penalty parameter and the dual step size in ALM are constant.

With nonconvex and stochastic objective,~\cite{huang2019faster} obtained complexity ${O}(\varepsilon^{-3})$ with additional assumptions such as $A$ having a full rank, large batch sizes depending on accuracy $\varepsilon$, and a uniform upper bound on  $\|\nabla f(x)\|^2$.
(The latter often does not hold for problems in the form~\eqref{eq:I}.)
See Sec.~\ref{subsec: analysis_linconst} for the details and how we address these shortcomings\footnote{The same limitations are present in~\cite[Thm. 5.6]{lin2022stochastic}.}. 

The work of~\cite{zhang2021gt} focuses on a consensus-optimization instance of \eqref{eq:I} with nonconvex stochastic objective, and uses gradient tracking and the variance-reduction approach  from~\cite{cutkosky2019momentum} to obtain $\widetilde{O}(\varepsilon^{-3})$ complexity.  
We achieve the same rate for a more general problem than consensus optimization; see for instance \cite[Sec.~1]{hong2016decomposing},~\cite[Section~7]{boyd2011distributed} for a ``sharing problem" example or \cite{boct2020proximal}, for standard splitting approaches to represent composite optimization problems as linearly constrained optimization.
Our ALM type method is more general and different from the problem-specific method of~\cite{zhang2021gt}.

\paragraph{Problem \eqref{eq:II}: Nonconvex stochastic optimization with nonlinear deterministic constraints.} 
For this problem, \cite{shi2022momentum} analyzes an algorithm similar to ours except that the penalty parameter is fixed (ours is variable) and depends on the predefined number of iterations $K$. 
Their approach involves an initial stage of finding a  feasible point of the nonconvex constraint and has complexity $O(\varepsilon^{-4})$. 
We show that the initial stage is unnecessary when we use variable parameters that depend on the current iterate $k$. 
The  sequential quadratic programming (SQP) method of \cite{curtis2021worst} has sample complexity $\widetilde{O}(\varepsilon^{-4})$,  but this paper does not address iteration complexity or computational complexity directly. 
In addition, each iteration requires solution of a linear system, a more expensive operation than the vector operations required at each iteration of an ALM.

\paragraph{Problem \eqref{eq:III}: Nonconvex stochastic optimization with nonlinear stochastic constraints. } 
The two works building on a \emph{regularization} idea for solving problem \eqref{eq:III}, with inequality constraints instead of equalities, are~\cite{boob2022stochastic} and~\cite{ma2020quadratically}. 
Both assume the existence of a strictly feasible solution, so their applicability to equality constraints is not clear. 
Both describe a  complexity of $ O(\varepsilon^{-6})$ on a slightly weaker assumption on Lipschitz continuity of the gradient. The algorithms of these papers have a double loop structure, compared to the single-loop algorithm that we analyze.

The recent independent work of~\cite{li2023stochastic} considered a similar idea of using STORM estimator for this problem to obtain complexity $O(\varepsilon^{-5})$.
In contrast to us, they analyzed an inexact ALM. 
Apart from the complicated structure of a double loop method, an important drawback of this approach is that termination rule of the inner loop is generally not implementable. 
This is because the number of required iterations of the inner loop depends on the optimal value of the subproblems, variance upper bounds or other unknown values\footnote{\cite[Lemma 5]{li2023stochastic} suggests that optimal value of subproblems can be replaced by other values such as the diameter of balls containing the iterates, upper bound of function values or the parameter of regularity-condition ($\delta$ in~\eqref{eq: asp_gen4} in our notation, $v$ in \cite[Assumption 3]{li2023stochastic}) to set the number of inner iterations. Unfortunately, these values are also normally unknown.}.
The other alternative for termination of the inner loop requires computing first order stationarity, which in turn requires the computation of full gradients, an operation that is not practical with stochastic algorithms. 
By contrast, single-loop any-time algorithms like ours have a  straightforward implementation both conceptually and in practice.

\paragraph{Notation. } To improve readability, we use standard asymptotic notations such as $O$, $\Omega$, $\asymp$ in the main text by suppressing universal constants. The distance between a point $x\in\mathbb{R}^n$ and a set $C\subseteq\mathbb{R}^n$ is denoted as $\dist(x, C)=\min_{y\in C}\|x-y\|$. \emph{Any-time} refers to an algorithm that does not require setting $\varepsilon$ in advance.
\section{Linear Constraints: Problem \eqref{eq:I}}\label{sec: main_linear}
\begin{algorithm*}
		\small
		\caption{Stochastic Linearized Augmented Lagrangian Method with Variance Reduction for~\eqref{eq:I}}
		\label{algorithm:lalm_lincons}
		\begin{algorithmic}[1]
			\STATE {\textbf{Input}:} Initialize $\lambda_0, x_0, g_0$ arbitrarily and $\alpha_k, \rho, \eta_k$ as in~\eqref{subsec: analysis_linconst}. 
   \FOR{$k=0, 1, \dots$}
   \STATE $x_{k+1} = x_k - \eta_{k+1} (g_k + A^\top \lambda_k + \rho A^\top(Ax_k - b))$
   \STATE $\lambda_{k+1} = \lambda_k + \rho (Ax_{k+1} - b)$
   \STATE Sample $\xi_{k+1}\sim P_{\Xi}$ and set $g_{k+1} = \tilde \nabla {f}(x_{k+1}, \xi_{k+1}) + (1-\alpha_{k+1})(g_k - \tilde \nabla {f}(x_k, \xi_{k+1}))$
   \ENDFOR
		\end{algorithmic}
	\end{algorithm*}
\subsection{Algorithm and the Main Result}
In this section, we address \eqref{eq:I}, restated here as
\begin{equation} \tag{I}
\min_{x\in \mathbb{R}^n} \big\{ f(x):= \mathbb{E}[\tilde{f}(x, \xi)]\big\} \text{~subject to~} Ax=b,
\end{equation}
for which the augmented Lagrangian is
\begin{equation*}
\mathcal{L}_\rho(x, \lambda) := f(x) + \langle \lambda, Ax-b\rangle + \frac{\rho}{2} \| Ax - b\|^2,
\end{equation*}
for parameter $\rho>0$. 
We describe and analyze a linearized ALM given in Algorithm~\ref{algorithm:lalm_lincons}, in which a single step of stochastic gradient descent replaces the minimization  of augmented Lagrangian with respect to the primal variable.  
This algorithm can be seen as a variance-reduced version of ALM with constant step sizes, studied in the deterministic setting by \cite{hong2016decomposing}.
Due to stochasticity in the objective, we use a variance-reduced estimator of $\nabla f(x_k)$ based on sampling of the oracle $\tilde \nabla {f}(x_i,\xi_i)$ for $i=0,1,\dotsc,k$; see~\eqref{eq: oracle} for the oracle description.
The output of the algorithm (denoted as $\bar x$ in Thm \ref{thm: informal_lincons}), after running for $K$ iterations, is a randomly selected primal-dual pair, i.e., $(x_{\hat k}, \lambda_{\hat k})$ where $\hat k$ is selected uniformly at random from $\{1,2,\dots, K\}$.

In this section, we obtain optimal complexity results with constant penalty parameter / dual step size $\rho$ in ALM. 
The latter feature of the algorithm is the main challenge in the analysis, and is the reason for our separate focus on the linearly constrained case.

We make the following assumptions in this case (see also~\eqref{eq: oracle}):
\begin{equation}\label{eq: asp_case1}\tag{A1}
\begin{aligned}
\mathbb{E}_\xi \| \tilde\nabla {f}(u, \xi) - \tilde\nabla {f}(v, \xi) \|^2 &\leq L_f^2\|u - v \|^2, \\
\mathbb{E}_\xi\|\tilde\nabla {f}(x, \xi) - \nabla f(x) \|^2 &\leq V^2, \\
f(x) &\geq 0, ~~\forall x.
\end{aligned}
\end{equation}
The first assumption in \eqref{eq: asp_case1} is Lipschitz continuity of the gradients on average (also called mean-square smoothness, see~\cite[eq. (4)]{arjevani2022lower}) while the second is a standard variance bound. 
By Jensen's inequality, the first inequality in~\eqref{eq: asp_case1} also implies that $
\|\nabla f(u) - \nabla f(v) \|\leq L_f\|u-v\|$.
The last assumption in \eqref{eq: asp_case1}, also made in~\cite{hong2016decomposing} is without loss of generality\footnote{As mentioned in \cite[footnote 1, pg 5]{hong2016decomposing} this assumption is equivalent to lower boundedness of $f$. }.

In the following subsection, we prove the following result, stated informally here for simplicity. 
The choices of parameters $\rho$, $\alpha_k$, and $\eta_k$, and the full result appear as Theorem~\ref{th: main1} and Corollary~\ref{cor: sor4}. 
\begin{theorem}[Informal]\label{thm: informal_lincons}
    With the assumptions in~\eqref{eq: asp_case1} and suitable choices of $\eta_k, \rho, \alpha_{k+1}$ as in~\eqref{eq: params_sec1}, Algorithm~\ref{algorithm:lalm_lincons} outputs $(\bar x, \bar\lambda)$ such that
\begin{equation*}
\mathbb{E}\| \nabla f(\bar x) + A^\top \bar \lambda \| \leq \varepsilon \text{~~~and~~~} \mathbb{E}\| A\bar x - b\| \leq \varepsilon,
\end{equation*}
after $K=\widetilde{O}(\varepsilon^{-3})$ iterations, thus requiring $\widetilde{O}(\varepsilon^{-3})$ evaluations of stochastic gradients of $f$.
\end{theorem}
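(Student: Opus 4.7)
The plan is to build a Lyapunov potential that combines the augmented Lagrangian with a dual-difference correction and a STORM-style variance-error term,
\[
\Phi_k \;=\; \mathcal{L}_\rho(x_k, \lambda_k) + \frac{c_1}{\rho}\|\lambda_k - \lambda_{k-1}\|^2 + c_2\|g_{k-1} - \nabla f(x_{k-1})\|^2,
\]
for constants $c_1, c_2$ to be chosen, and to prove a one-step expected-descent recursion that I then telescope. A preliminary step is to verify $\Phi_k$ is bounded below. Completing the square gives $\mathcal{L}_\rho(x, \lambda) \ge f(x) - \tfrac{1}{2\rho}\|\lambda\|^2$, and since $\lambda_0$ can be chosen in $\mathrm{Range}(A)$ and the dual update preserves this range-membership by induction, the range-space property \eqref{eq:def.delta} lets $\|\lambda_k\|^2$ be controlled through $\|A^\top\lambda_k\|^2/\delta$, which in turn is expressible via primal stationarity residuals; combined with $f \ge 0$ from \eqref{eq: asp_case1}, this yields the lower bound.

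For the descent I would combine three estimates. First, $(L_f + \rho\|A\|^2)$-smoothness of $\mathcal{L}_\rho(\cdot, \lambda_k)$ applied to the primal step gives
\[
\mathcal{L}_\rho(x_{k+1}, \lambda_k) - \mathcal{L}_\rho(x_k, \lambda_k) \le -\Bigl(\tfrac{1}{\eta_{k+1}} - \tfrac{L_f + \rho\|A\|^2}{2}\Bigr)\|x_{k+1}-x_k\|^2 + \tfrac{\eta_{k+1}}{2}\|g_k - \nabla f(x_k)\|^2.
\]
Second, the dual step contributes the \emph{positive} term $\mathcal{L}_\rho(x_{k+1}, \lambda_{k+1}) - \mathcal{L}_\rho(x_{k+1}, \lambda_k) = \tfrac{1}{\rho}\|\lambda_{k+1}-\lambda_k\|^2$, and absorbing this while $\rho$ stays \emph{constant} is the principal technical obstacle. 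The key device is \eqref{eq:def.delta}: since $\lambda_{k+1} - \lambda_k = \rho(Ax_{k+1}-b) \in \mathrm{Range}(A)$, one has $\|\lambda_{k+1}-\lambda_k\|^2 \le \tfrac{1}{\delta}\|A^\top(\lambda_{k+1}-\lambda_k)\|^2$. Eliminating $\rho A^\top(Ax_k - b)$ via the primal update and using Lipschitz continuity of $\nabla f$ yields a bound
\[
\|A^\top(\lambda_{k+1}-\lambda_k)\| \;\lesssim\; \tfrac{\|x_{k+1}-x_k\|}{\eta_{k+1}} + L_f\|x_k - x_{k-1}\| + \|g_k - \nabla f(x_k)\| + \|\lambda_k - \lambda_{k-1}\|,
\]
so the positive dual term is absorbed by the primal descent, by the $c_1\|\lambda_k-\lambda_{k-1}\|^2/\rho$ correction in $\Phi_k$, and by the variance-error term, provided $\delta\rho$ is large enough relative to $L_f, \|A\|^2$ and $\eta_{k+1}$ is small enough. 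Third, the STORM recursion (\cite{cutkosky2019momentum}) applied to line 5 of Algorithm \ref{algorithm:lalm_lincons} using \eqref{eq: oracle} gives
\[
\mathbb{E}\|g_{k+1} - \nabla f(x_{k+1})\|^2 \le (1-\alpha_{k+1})^2 \mathbb{E}\|g_k - \nabla f(x_k)\|^2 + 2(1-\alpha_{k+1})^2 L_f^2 \mathbb{E}\|x_{k+1}-x_k\|^2 + 2\alpha_{k+1}^2 V^2,
\]
so the $c_2\|g_{k-1} - \nabla f(x_{k-1})\|^2$ term contracts and the primal-movement term it produces is reabsorbed by the primal descent.

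Combining the three estimates yields $\mathbb{E}[\Phi_{k+1} - \Phi_k] \lesssim -\tfrac{1}{\eta_{k+1}}\mathbb{E}\|x_{k+1}-x_k\|^2 - \tfrac{1}{\rho}\mathbb{E}\|\lambda_{k+1}-\lambda_k\|^2 + O(\alpha_{k+1}^2 V^2)$. Telescoping and using the standard momentum tuning $\eta_k \asymp k^{-1/3}$, $\alpha_k \asymp k^{-2/3}$, with $\rho$ a sufficiently large constant depending on $L_f, \|A\|, \delta$, produces $\sum_{k=1}^{K}\bigl(\mathbb{E}\|x_{k+1}-x_k\|^2/\eta_{k+1} + \mathbb{E}\|\lambda_{k+1}-\lambda_k\|^2/\rho\bigr) = O(\log K)$. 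Finally, the primal update rearranges to the identity
\[
\nabla f(x_{k+1}) + A^\top \lambda_{k+1} \;=\; \bigl(\nabla f(x_{k+1}) - g_k\bigr) + \bigl(\rho A^\top A - \tfrac{1}{\eta_{k+1}}I\bigr)(x_{k+1}-x_k),
\]
while feasibility follows directly from $Ax_{k+1}-b = (\lambda_{k+1}-\lambda_k)/\rho$. Averaging over uniformly random $\hat k \in \{1,\dots,K\}$ and applying Jensen's inequality translates the summed bounds into $\mathbb{E}\|\nabla f(\bar x) + A^\top \bar\lambda\| + \mathbb{E}\|A\bar x - b\| \lesssim \sqrt{\log K}/K^{1/3}$, so $K = \widetilde O(\varepsilon^{-3})$ iterations suffice, each of which uses $O(1)$ stochastic-gradient samples.
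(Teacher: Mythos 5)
Your overall architecture (augmented-Lagrangian potential plus a STORM variance term, the range-space inequality \eqref{eq:def.delta} to convert $\|\lambda_{k+1}-\lambda_k\|^2$ into primal quantities, and the final reduction to stationarity and feasibility) matches the paper's, but there are two genuine gaps.

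First, your descent recursion does not close with a constant $\rho$ and $\eta_k\to 0$. Subtracting consecutive primal optimality conditions gives $A^\top(\lambda_{k+1}-\lambda_k)=g_{k-1}-g_k-Q_{k+1}(x_{k+1}-2x_k+x_{k-1})+O(\eta_k^{-1}-\eta_{k+1}^{-1})\|x_k-x_{k-1}\|$ with $Q_{k+1}=\eta_{k+1}^{-1}I-\rho A^\top A$. By splitting the second difference into first differences, as your displayed bound $\|A^\top(\lambda_{k+1}-\lambda_k)\|\lesssim \eta_{k+1}^{-1}\|x_{k+1}-x_k\|+\cdots$ does, squaring produces a term of order $\frac{1}{\rho\delta\eta_{k+1}^2}\|x_{k+1}-x_k\|^2$ in $\mathbb{E}[\Phi_{k+1}-\Phi_k]$. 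The only negative term available to absorb it is $-\frac{1}{2\eta_{k+1}}\|x_{k+1}-x_k\|^2$, so you would need $\eta_{k+1}\gtrsim 1/(\rho\delta)$ uniformly, which contradicts $\eta_k\asymp k^{-1/3}$. This is precisely why the paper's potential $Y_k$ carries the extra terms $\frac{\rho m}{2\eta_{k+1}}\|Ax_k-b\|^2$ and $\frac{m}{2\eta_k}\|x_k-x_{k-1}\|^2_{Q_k}$ (Lemma~\ref{lem: gjr5}): their recursion generates a negative second-difference term $-\frac{m}{2\eta_{k+1}}\|x_{k+1}-2x_k+x_{k-1}\|^2_{Q_{k+1}}$ that cancels the positive $\frac{3(1+c_1)}{\rho\delta}\|x_{k+1}-2x_k+x_{k-1}\|^2_{Q_{k+1}^\top Q_{k+1}}$ from the dual bound via the matrix inequality $\frac{1}{\eta_{k+1}}Q_{k+1}\succeq Q_{k+1}^\top Q_{k+1}$, a cancellation that survives $\eta_k\to 0$. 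Your $\Phi_k$ has no mechanism for this.

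Second, your lower bound on the potential is circular. Writing $\mathcal{L}_\rho(x,\lambda)\ge f(x)-\frac{1}{2\rho}\|\lambda\|^2$ and then controlling $\|\lambda_k\|$ through $\|A^\top\lambda_k\|/\sqrt{\delta}$ requires a uniform bound on $\|g_{k-1}\|+\|Q_k(x_k-x_{k-1})\|$, i.e.\ on exactly the stationarity residuals you are trying to prove small; no such a priori bound is available (the paper explicitly avoids the bounded-gradient assumption of~\cite{huang2019faster}, and $\eta_k^{-1}\|x_k-x_{k-1}\|$ is not bounded pointwise). The paper's Lemma~\ref{lem: potential_lb} instead shows the \emph{sum} $\sum_k\mathbb{E}\mathcal{L}_\rho(x_k,\lambda_k)\ge-\|\lambda_0\|^2/\rho$ by telescoping $\langle\lambda_{k+1},Ax_{k+1}-b\rangle=\frac{1}{2\rho}(\|\lambda_{k+1}\|^2-\|\lambda_k\|^2+\|\lambda_{k+1}-\lambda_k\|^2)$, and combines this summability with the almost-monotonicity $\mathbb{E}Y_{k+1}\le\mathbb{E}Y_k+v_kV^2$, $\sum_k v_k<\infty$, in a contradiction argument: if $\mathbb{E}Y_k$ ever dropped far below zero it would stay there, making the sum diverge. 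You need some version of this argument; the pointwise bound you sketch does not exist.
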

\begin{remark}
An important aspect of this result, which is critical for ALM, is that the penalty parameter and the dual step size $\rho$ is a constant and independent of final accuracy $\varepsilon$ or iteration counter $k$. 
The choices $\alpha_k, \eta_k$ are independent of $\varepsilon$ and they depend on $k$ because of the stochastic setting we focus on. 
The independence from $\varepsilon$ is critical to ensuring that a certain potential function can increase only by a controlled amount at every iteration, which in turn is important for lower boundedness of the expected potential.
Further explanations appear in Section~\ref{subsec: analysis_linconst}.

\end{remark}
\subsection{Analysis}\label{subsec: analysis_linconst}
We use the following parameters which are written with asymptotic notations for readability (recall \eqref{eq:def.delta} and \eqref{eq: asp_case1}). 
We suppress only universal constants; full specifications are provided in~\eqref{eq: params_sec1_app}.
\begin{equation}\label{eq: params_sec1}
\begin{aligned}
\rho &\asymp \frac{L_f}{\delta},~~~k_0 =\Omega(\mathrm{poly}(\delta^{-1})),~~~
\eta = \frac{1}{11(L_f + \rho \| A\|^2)}, \\
\eta_k &= \frac{\eta}{(k+k_0)^{1/3}\log(k+k_0)},~~~ \alpha_k = 121L_f^2\eta_k^2. 
\end{aligned}
\end{equation}
We start with a lemma that analyzes a single iteration of the algorithm. 
This lemma constructs a potential function $Y_k$ that we show later to be non-increasing in expectation, up to a small error. 
Due to the combination of ALM with constant dual step sizes/penalty parameters and the use of variance-reduction techniques, each with complicated constants, the coefficients in this lemma are rather involved. 
We only provide the orders of some terms, which suffice to convey the central ideas in the lemma.
\begin{lemma}\label{lem: skl4}
Let the assumptions in~\eqref{eq: asp_case1} hold. Set $\rho$, $\eta_k$, $\alpha_{k}$ as~\eqref{eq: params_sec1}. For the iterates of Alg.~\ref{algorithm:lalm_lincons}, we have
\begin{equation}\label{eq: gjt7}
\begin{aligned}
\mathbb{E}Y_{k+1} \leq \mathbb{E}Y_k  - \frac{\eta_{k+1}}{2} \mathbb{E} \| g_k - \nabla f(x_k)\|^2  + (\beta_{1, k+1} + \beta_{2, k+1}) \mathbb{E} \| x_{k+1} - x_k\|^2 + w_k + v_k V^2,
\end{aligned}
\end{equation}
where
    \begin{equation*}
\begin{aligned}
Y_{k+1} &= L_{\rho}(x_{k+1}, \lambda_{k+1}) + \frac{\rho m}{2\eta_{k+2}} \| Ax_{k+1} - b\|^2  + \frac{m}{2\eta_{k+1}} \| x_{k+1} - x_k\|^2_{Q_{k+1}} + \beta_{1, k+1} \| x_{k+1} - x_k\|^2 \\
    &\quad+ \frac{2}{c\eta_{k+1}} \| g_{k+1} - \nabla f(x_{k+1})\|^2 + \left(\frac{6(1+c_1)}{\rho\delta} + \frac{4m}{L_f\eta_k} \right)  \| g_{k} - \nabla f(x_{k})\|^2
\end{aligned}
\end{equation*}
with $c_1 > 0$, $c =121L_f^2$ , $m \asymp \frac{1}{L_f}$, and
\begin{align*}
    \beta_{1, k} &\asymp L_f+\eta_{k+1}^{-1},~~~
    \beta_{2, k} = O\left(\frac{L_f}{\delta}+\eta_{k+1}^{-1}\right) - \frac{1}{2\eta_{k}}, \\
    w_k &= \frac{3(1+c_1)}{\delta\rho} \mathbb{E}\| x_{k+1} - 2x_k + x_{k-1}\|^2_{Q_{k+1}^\top Q_{k+1}} - \frac{m}{2\eta_{k+1}}\mathbb{E} \| x_{k+1} - 2x_k + x_{k-1} \|^2_{Q_{k+1}},\\
    v_k &= O\left(\frac{\alpha_k^2}{\eta_k}\right),~~~ Q_k = \eta_k^{-1} I - \rho  A^\top A \succeq 0.
\end{align*}
\end{lemma}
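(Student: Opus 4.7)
The plan is a single-iteration potential analysis combining three ingredients: a descent estimate for $\mathcal{L}_\rho$ along the primal step, a change-of-dual bound that converts $\lambda_{k+1}-\lambda_k$ into primal differences via the minimum-eigenvalue inequality \eqref{eq:def.delta}, and the STORM recursion for the gradient estimator.

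First I would split
\[
\mathcal{L}_\rho(x_{k+1},\lambda_{k+1})-\mathcal{L}_\rho(x_k,\lambda_k) = \bigl[\mathcal{L}_\rho(x_{k+1},\lambda_k)-\mathcal{L}_\rho(x_k,\lambda_k)\bigr] + \tfrac{1}{\rho}\|\lambda_{k+1}-\lambda_k\|^2,
\]
using $\lambda_{k+1}-\lambda_k=\rho(Ax_{k+1}-b)$. The bracket is handled by $(L_f+\rho\|A\|^2)$-smoothness of $x\mapsto\mathcal{L}_\rho(x,\lambda_k)$ combined with the rewrite $\nabla_x\mathcal{L}_\rho(x_k,\lambda_k) = \eta_{k+1}^{-1}(x_k-x_{k+1}) + (\nabla f(x_k)-g_k)$ coming from the primal update in Algorithm~\ref{algorithm:lalm_lincons}. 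This produces a $-\eta_{k+1}^{-1}\|x_{k+1}-x_k\|^2$ descent, a smoothness quadratic, and a cross term $\langle\nabla f(x_k)-g_k,\,x_{k+1}-x_k\rangle$, which Young's inequality splits into the $\frac{\eta_{k+1}}{2}\|g_k-\nabla f(x_k)\|^2$ contribution visible on the RHS of \eqref{eq: gjt7} plus an $\|x_{k+1}-x_k\|^2$ penalty absorbed into $\beta_{1,k+1}$.

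The main obstacle is the $\rho^{-1}\|\lambda_{k+1}-\lambda_k\|^2$ term: with $\rho$ held constant, I cannot dispose of it by shrinking a dual step, as the existing ALM literature does. Instead, since $\lambda_{k+1}-\lambda_k\in\mathrm{Range}(A)$, I would invoke \eqref{eq:def.delta} to get $\|\lambda_{k+1}-\lambda_k\|^2\le\delta^{-1}\|A^\top(\lambda_{k+1}-\lambda_k)\|^2$ and substitute the identity
\[
A^\top\lambda_{k+1} = -Q_{k+1}(x_{k+1}-x_k) - g_k,
\]
derived from the $x$-update, the dual update, and the definition $Q_{k+1}=\eta_{k+1}^{-1}I-\rho A^\top A$. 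Differencing this identity at iterations $k+1$ and $k$ yields $A^\top(\lambda_{k+1}-\lambda_k) = -Q_{k+1}(x_{k+1}-x_k)+Q_k(x_k-x_{k-1})-(g_k-g_{k-1})$; squaring, adding and subtracting $Q_{k+1}(x_k-x_{k-1})$, and applying Young's inequality produces exactly the $Q_{k+1}^\top Q_{k+1}$-weighted second-difference $\|x_{k+1}-2x_k+x_{k-1}\|^2$ appearing in $w_k$, together with residual $\|x_{k+1}-x_k\|^2$ and $\|g_k-\nabla f(x_k)\|^2$ contributions. These residuals are absorbed by the $\frac{m}{2\eta_{k+1}}\|x_{k+1}-x_k\|^2_{Q_{k+1}}$ and $\|g_{k-1}-\nabla f(x_{k-1})\|^2$ summands of $Y_k$, whose telescoping against their $Y_{k+1}$ counterparts is precisely what generates the compensating $-\frac{m}{2\eta_{k+1}}\|x_{k+1}-2x_k+x_{k-1}\|^2_{Q_{k+1}}$ piece of $w_k$.

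Finally I would apply the STORM recursion in conditional expectation,
\[
\mathbb{E}\|g_{k+1}-\nabla f(x_{k+1})\|^2 \le (1-\alpha_{k+1})^2\|g_k-\nabla f(x_k)\|^2 + 2L_f^2\|x_{k+1}-x_k\|^2 + 2\alpha_{k+1}^2 V^2,
\]
scaled by $\tfrac{2}{c\eta_{k+1}}$. The tuning $c=121L_f^2$ and $\alpha_{k+1}=c\eta_{k+1}^2$ is chosen so that, after subtracting the $\tfrac{2}{c\eta_k}\|g_k-\nabla f(x_k)\|^2$ summand of $Y_k$, the surplus absorbs $-\tfrac{\eta_{k+1}}{2}\|g_k-\nabla f(x_k)\|^2$ on the RHS, while $\tfrac{4L_f^2}{c\eta_{k+1}}\|x_{k+1}-x_k\|^2=O(\eta_{k+1}^{-1})\|x_{k+1}-x_k\|^2$ folds into $\beta_{1,k+1}$ and $\tfrac{4\alpha_{k+1}^2 V^2}{c\eta_{k+1}}$ matches $v_k V^2$. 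Collecting every $\|x_{k+1}-x_k\|^2$ coefficient—smoothness, Young's, the $\delta^{-1}$-dual bound, and STORM—into $\beta_{1,k+1}+\beta_{2,k+1}$ (the $-\tfrac{1}{2\eta_k}$ hidden in $\beta_{2,k+1}$ records the drop of $\beta_{1,k}\|x_k-x_{k-1}\|^2$ from $Y_k$), and handling the telescoping of $\tfrac{\rho m}{2\eta_{k+2}}\|Ax_{k+1}-b\|^2$ via $\|Ax_{k+1}-b\|^2\le 2\|Ax_k-b\|^2 + 2\|A\|^2\|x_{k+1}-x_k\|^2$, completes \eqref{eq: gjt7}. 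The delicate step is this final bookkeeping, which is exactly what forces the scaling $\rho\asymp L_f/\delta$ in \eqref{eq: params_sec1}.
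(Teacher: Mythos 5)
Your first and third ingredients match the paper: the split $\mathcal{L}_\rho(x_{k+1},\lambda_{k+1})-\mathcal{L}_\rho(x_k,\lambda_k)=[\mathcal{L}_\rho(x_{k+1},\lambda_k)-\mathcal{L}_\rho(x_k,\lambda_k)]+\rho^{-1}\|\lambda_{k+1}-\lambda_k\|^2$, the bound on the dual change via \eqref{eq:def.delta} and the differenced stationarity identity $A^\top(\lambda_{k+1}-\lambda_k)=-Q_{k+1}(x_{k+1}-x_k)+Q_k(x_k-x_{k-1})-(g_k-g_{k-1})$, and the STORM recursion scaled by $2/(c\eta_{k+1})$ are exactly Lemma~\ref{lem: gjr4}, Corollary~\ref{cor: gjr5}, and Lemma~\ref{lem: var_bd}. (Your STORM constants $2L_f^2$ and $2\alpha_{k+1}^2V^2$ should be $7L_f^2$ and $3\alpha_{k+1}^2V^2$ as in \eqref{eq: trw3}, but that only changes absolute constants.)

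The genuine gap is in how you propose to handle the feasibility and $Q$-weighted iterate-difference parts of the potential. First, the bound $\|Ax_{k+1}-b\|^2\le 2\|Ax_k-b\|^2+2\|A\|^2\|x_{k+1}-x_k\|^2$ does \emph{not} telescope: it yields $\frac{\rho m}{2\eta_{k+2}}\|Ax_{k+1}-b\|^2\le\frac{\rho m}{\eta_{k+2}}\|Ax_k-b\|^2+\cdots$, whose coefficient on $\|Ax_k-b\|^2$ is roughly \emph{twice} the coefficient carried in $Y_k$, and the surplus $\approx\frac{\rho m}{2\eta_{k+1}}\|Ax_k-b\|^2$ is not controlled by anything in $w_k$, $v_k$, or the negative terms of \eqref{eq: gjt7} (feasibility is not small a priori). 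Second, the negative piece $-\frac{m}{2\eta_{k+1}}\|x_{k+1}-2x_k+x_{k-1}\|^2_{Q_{k+1}}$ of $w_k$ is not ``generated by telescoping'' the $\frac{m}{2\eta}\|\cdot\|^2_{Q}$ summands of $Y_k$ against those of $Y_{k+1}$; mere telescoping produces only the difference of those two terms. In the paper both issues are resolved simultaneously by Lemma~\ref{lem: gjr5}: one takes the inner product of the differenced stationarity identity (which contains $\rho A^\top(Ax_{k+1}-b)$ via $\lambda_{k+1}-\lambda_k=\rho(Ax_{k+1}-b)$) with $x_{k+1}-x_k$, and applies the three-point identity $2\langle a,b\rangle=\|a\|^2+\|b\|^2-\|a-b\|^2$ twice --- once to $\frac{\rho}{\eta_{k+1}}\langle Ax_{k+1}-b,A(x_{k+1}-x_k)\rangle$, producing the exact telescoping of $\frac{\rho}{2\eta}\|Ax-b\|^2$, and once to $\langle Q_{k+1}(x_{k+1}-2x_k+x_{k-1}),x_{k+1}-x_k\rangle$, producing the negative second-difference term. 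That negative term is essential because it must dominate the positive $\frac{3(1+c_1)}{\delta\rho}\|x_{k+1}-2x_k+x_{k-1}\|^2_{Q_{k+1}^\top Q_{k+1}}$ contribution from your dual bound (condition \eqref{eq: hee4_1}); without Lemma~\ref{lem: gjr5} your argument has no source for it and the potential inequality cannot close.
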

At a high level, the lemma requires us  to show that \emph{(i)} $\beta_{1, k+1} + \beta_{2,k+1} < 0$, \emph{(ii)} $w_k \leq 0$ and \emph{(iii)} $\sum_{k=1}^\infty v_k < \infty$ to obtain that the function $Y_k$ is non-increasing in expectation up to a small error (see~\eqref{lem: skl4}). 
The parameter choices of \eqref{eq: params_sec1} with constants chosen as in in~\eqref{eq: params_sec1_app} can be shown to achieve the required properties, by a tedious but straightforward analysis.

The main theorem of this section utilizes the single-iteration inequality described in Lemma~\ref{lem: skl4} to show that both scaled iterate differences and variance term are small. 
Approximate stationarity follows in view of Theorem \ref{thm: informal_lincons} via standard reductions described in Corollary \ref{cor: sor4}. 
We provide a proof sketch to illustrate the main ideas; details are deferred to Section~\ref{app: lincons_main}.
\begin{theorem}\label{th: main1}
Let the assumptions in~\eqref{eq: asp_case1} hold and suppose that $\eta_k$ and the other algorithmic parameters are chosen as in~\eqref{eq: params_sec1} (see also~\eqref{eq: params_sec1_app}).
Then, for the iterates of Algorithm~\ref{algorithm:lalm_lincons}, we have for any $K>1$ that 
\begin{align*}
&\frac{1}{K}\sum_{k=1}^{K+1} \mathbb{E} \left[ \| \eta_{k}^{-1}(x_{k} - x_{k-1})\|^2 + \| g_{k-1} - \nabla f(x_{k-1})\|^2 \right]\\&= \widetilde{O}(K^{-2/3}).
\end{align*}
\end{theorem}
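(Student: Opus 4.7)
The plan is to telescope the per-iteration bound from Lemma \ref{lem: skl4} and then divide by $K$. Summing $\mathbb{E} Y_{k+1} - \mathbb{E} Y_k \leq -(\eta_{k+1}/2)\mathbb{E}\|g_k-\nabla f(x_k)\|^2 + (\beta_{1,k+1}+\beta_{2,k+1})\mathbb{E}\|x_{k+1}-x_k\|^2 + w_k + v_k V^2$ from $k=0$ to $K$, the left side collapses and rearrangement gives
\begin{equation*}
\sum_{k=0}^{K} \left[ \tfrac{\eta_{k+1}}{2}\mathbb{E}\|g_k-\nabla f(x_k)\|^2 - (\beta_{1,k+1}+\beta_{2,k+1})\mathbb{E}\|x_{k+1}-x_k\|^2 \right] \leq \mathbb{E} Y_0 - \mathbb{E} Y_{K+1} + \sum_{k=0}^{K} w_k + V^2 \sum_{k=0}^{K} v_k.
\end{equation*}
The argument then reduces to four checks against the parameter choices in \eqref{eq: params_sec1}: that $-(\beta_{1,k+1}+\beta_{2,k+1}) \geq c_0\eta_{k+1}^{-1}$ for some $c_0>0$, that $w_k\leq 0$, that $\sum_k v_k<\infty$, and that $\mathbb{E} Y_k$ is uniformly bounded below.

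Three of these reduce to bookkeeping. With $k_0$ polynomial in $\delta^{-1}$, the $-1/(2\eta_k)$ piece of $\beta_{2,k+1}$ dominates the $O(L_f/\delta + \eta_{k+1}^{-1})$ contributions and yields a negative coefficient of the required order. For $w_k$, the symmetric positive semidefinite $Q_{k+1}$ satisfies $Q_{k+1}^\top Q_{k+1} \preceq \eta_{k+1}^{-1} Q_{k+1}$, and the constants in \eqref{eq: params_sec1_app}, together with $m\delta\rho\asymp 1$ (from $m\asymp 1/L_f$ and $\rho\asymp L_f/\delta$), make the positive part of $w_k$ absorbed by the negative part. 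For summability, substituting $\alpha_k=121L_f^2\eta_k^2$ gives $v_k \asymp L_f^4\eta_k^3 \asymp 1/[(k+k_0)\log^3(k+k_0)]$, whose series converges by the integral test. The hard part is the lower bound on $\mathbb{E} Y_k$: the potential contains the unsigned inner product $\langle\lambda_{k+1},Ax_{k+1}-b\rangle$ in $\mathcal{L}_\rho$, and since $\rho$ is a fixed constant one cannot absorb it by enlarging the penalty. To handle this I rearrange the primal update as $A^\top\lambda_k = -\eta_{k+1}^{-1}(x_{k+1}-x_k) - g_k - \rho A^\top(Ax_k-b) \in \mathrm{Range}(A)$ and apply \eqref{eq:def.delta} to bound the projection of $\lambda_k$ onto $\mathrm{Range}(A)$ by a multiple of $\delta^{-1/2}(\eta_{k+1}^{-1}\|x_{k+1}-x_k\| + \|g_k\| + \rho\|A\|\,\|Ax_k-b\|)$; combined with $f\geq 0$ from \eqref{eq: asp_case1} and Young's inequality, the cross term is dominated by constant multiples of the nonnegative pieces already present in $Y_k$.

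Once the four checks hold, the telescoped inequality yields $\sum_{k=0}^{K}[\eta_{k+1}\mathbb{E}\|g_k-\nabla f(x_k)\|^2 + \eta_{k+1}^{-1}\mathbb{E}\|x_{k+1}-x_k\|^2] = O(1)$. Since $\eta_k$ is nonincreasing, $\eta_{k+1}\geq\eta_{K+1}$ and $\eta_{k+1}^{-1}\leq\eta_{K+1}^{-1}$, so that $\sum_{k=0}^{K}\mathbb{E}\|g_k-\nabla f(x_k)\|^2 \leq \eta_{K+1}^{-1}\cdot O(1)$ and $\sum_{k=0}^{K}\eta_{k+1}^{-2}\mathbb{E}\|x_{k+1}-x_k\|^2 \leq \eta_{K+1}^{-1}\cdot O(1)$. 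Plugging in $\eta_{K+1}^{-1}\asymp K^{1/3}\log K$ and dividing by $K$ produces $\widetilde{O}(K^{-2/3})$ on each term, and a harmless index shift matches the sum $\sum_{k=1}^{K+1}$ appearing in the statement.
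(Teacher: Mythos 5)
Your overall architecture matches the paper's: telescope Lemma~\ref{lem: skl4}, verify that $w_k\le 0$ and that $\beta_{1,k+1}+\beta_{2,k+1}\le -c_0\eta_{k+1}^{-1}$, note $\sum_k v_k<\infty$, invoke lower-boundedness of $\mathbb{E}Y_k$, and then convert $\sum_k[\eta_{k+1}^{-1}\mathbb{E}\|x_{k+1}-x_k\|^2+\eta_{k+1}\mathbb{E}\|g_k-\nabla f(x_k)\|^2]=O(1)$ into the stated rate using monotonicity of $\eta_k$ and $\eta_{K+1}^{-1}/K=\widetilde{O}(K^{-2/3})$. Those parts are fine. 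The problem is the step you yourself identify as the hard one: the lower bound on $\mathbb{E}Y_k$. Your proposed argument bounds $\|A^\top\lambda_k\|$ via the primal update by $\eta_{k+1}^{-1}\|x_{k+1}-x_k\|+\|g_k\|+\rho\|A\|\,\|Ax_k-b\|$ and then claims the cross term $\langle\lambda_{k+1},Ax_{k+1}-b\rangle$ is absorbed by ``nonnegative pieces already present in $Y_k$.'' But $\|g_k\|^2$ is not such a piece: only $\|g_k-\nabla f(x_k)\|^2$ appears in the potential, and $\|\nabla f(x_k)\|^2$ is controlled by nothing in $Y_k$. The paper is explicit that it does \emph{not} assume bounded gradients (this is precisely its criticism of \cite{huang2019faster}, whose bounded-gradient assumption ``trivially implies a lower bound for the potential''), so this absorption does not go through as written. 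A possible repair is the inequality $\|\nabla f(x)\|^2\le 2L_f f(x)$ valid for globally $L_f$-smooth $f\ge 0$ (which may be what your appeal to ``$f\ge 0$'' is gesturing at), but you do not state it, there is an index mismatch ($f(x_{k+1})$ sits in $Y_{k+1}$ while the update produces $\|\nabla f(x_k)\|$), and the Young parameter must simultaneously satisfy an upper bound of order $m\delta$ and a lower bound of order $1/(m\rho^2\|A\|^2)$ --- consistent with \eqref{eq: params_sec1_app} but far from the ``bookkeeping'' you describe.

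The paper takes a genuinely different route to lower-boundedness (Lemma~\ref{lem: potential_lb}): using $\langle\lambda_{k+1},Ax_{k+1}-b\rangle=\tfrac{1}{2\rho}(\|\lambda_{k+1}\|^2-\|\lambda_k\|^2+\|\lambda_{k+1}-\lambda_k\|^2)$ it shows the \emph{sum} $\sum_{k}\mathbb{E}\mathcal{L}_\rho(x_{k+1},\lambda_{k+1})\ge -\|\lambda_0\|^2/\rho$ by telescoping in $\|\lambda_k\|^2$, and then combines this with the almost-monotonicity $\mathbb{E}Y_{k+1}\le\mathbb{E}Y_k+v_kV^2$, $\sum_k v_k<\infty$, in a contradiction argument: if some $\mathbb{E}Y_{k_1}$ dropped below $-2C$ with $C=V^2\sum_k v_k$, every subsequent $\mathbb{E}Y_k$ would stay below $-C$, contradicting the finiteness of the sum. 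You should either adopt that argument or fully carry out the pointwise absorption sketched above; as it stands, the key claim is unsupported.
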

\begin{proof}[Proof sketch]
In the result of Lemma~\ref{lem: skl4}, we use the parameter choices given in~\eqref{eq: params_sec1} to obtain
\begin{align}
\mathbb{E}Y_{k+1} &\leq \mathbb{E}Y_k  -\frac{1}{16\eta_{k+1}}\mathbb{E} \| x_{k+1} - x_k\|^2 - \frac{\eta_{k+1}}{2} \mathbb{E} \| g_k - \nabla f(x_k)\|^2  + v_k V^2.\label{eq: nuy4}
\end{align}
Note that by the choices of $\eta_k, \alpha_k$ and the definition of $v_k$, we have that $\sum_{k=1}^\infty v_k = O(1)$.
By adjusting  \eqref{eq: nuy4} and summing for $k\geq 1$, we obtain
\begin{align*}
&\frac{\eta_{K+1} }{32}\sum_{k=1}^{K+1} \left(\mathbb{E} \| x_{k} - x_{k-1} \|^2 + \mathbb{E} \| g_{k-1} - \nabla f(x_{k-1})\|^2\right)\\
&\leq  \mathbb{E}Y_1 + \frac{1}{32\eta_1}\| x_1-x_0\|^2 + \frac{\eta_1}{4}\|g_0 - \nabla f(x_0)\|^2  -\mathbb{E} Y_{K+1} +O(1).
\end{align*}
To ensure that the the right-hand side is upper bounded by a constant, we need to show $\mathbb{E}Y_{k+1}$ is lower bounded. This is not immediate, because our use of a constant dual step size blocks the derivation of a uniform upper bound on the norm of dual variable $\lambda_k$. 
Lack of monotonicity of $\mathbb{E}Y_k$ also prevents us from using the estimates available in deterministic cases; see \cite{hong2016decomposing}. 
In Lemma~\ref{lem: potential_lb}, we show that the \emph{almost} monotonicity of $\mathbb{E}Y_k$ given in~\eqref{eq: nuy4} is sufficient to show  lower boundedness. This fact leads to the result.
\end{proof}
With this result, we can use the standard reductions of Corollary \ref{cor: sor4} to prove Theorem~\ref{thm: informal_lincons}. 
It is worth noting that most of the estimations in the analysis would simplify if we were to fix all $\eta_k, \alpha_k$ at values that depend on the final iterate $K$ (or equivalently $\varepsilon$). 
However, such choices  would not suffice to show lower boundedness of $\mathbb{E}Y_k$ mentioned above, which is necessary to obtain the right constants. 
Our use of variable step sizes also allows us to  derive an ``any-time" algorithm with no need to set an accuracy $\varepsilon$ in advance. 
For example,~\cite{huang2019faster} needed to assume uniform boundedness of $\|\nabla f(x_k)\|^2$ which trivially implies a lower bound for the potential; see \cite[eq. (69)]{huang2019faster}. 
However, this assumption does not hold normally and the limitations of bounded gradient assumption are well-known; see, for example, \cite[Section 3]{yang2023two}, \cite{faw2022power}. 
Our analysis does not need this restriction to obtain a lower bound for the potential. 

\section{Stochastic Constraints: Problem~\eqref{eq:III}}\label{sec: qp_stoc_const}
\begin{algorithm*}
		\small
		\caption{Stochastic Linearized Quadratic Penalty Method with Variance Reduction for~\eqref{eq:III}}
		\label{algorithm:stoc_lalm}
		\begin{algorithmic}[1]
			\STATE {\textbf{Input}:} Initialize $x_1\in X$ and $g_1 = \tilde\nabla Q_{\rho_1}(x_1,B_1)$ and $\alpha_k, \rho_k, \eta_k$ as in Theorem \ref{eq: asb4}
   \FOR{$k=1, 2, \dots$}
   \STATE $x_{k+1} = P_{X}(x_k - \eta_k g_k)$
   \STATE $\text{Sample $\xi^0_{k+1}$, $\zeta^{1}_{k+1}$, $\zeta^{2}_{k+1}$ to get }B_{k+1} = (\xi^0_{k+1}, \zeta^{1}_{k+1}, \zeta^{2}_{k+1}) \in \Xi \times \Zeta^{2}$ where $\zeta^{1}_{k+1}$ and $\zeta^{2}_{k+1}$ are i.i.d.
   \STATE $\tilde{\nabla}Q_{\rho}(x, B_{k+1}) = \tilde{\nabla}f(x, \xi^{0}_{k+1}) + \rho \sum_{i=1}^m \tilde \nabla c_{i}(x, \zeta^{1}_{k+1}) \tilde c_i(x, \zeta^{2}_{k+1})$
    \STATE $g_{k+1} = \tilde\nabla Q_{\rho_{k+1}}(x_{k+1}, B_{k+1}) + (1-\alpha_{k+1})(g_k - \tilde\nabla Q_{\rho_{k}}(x_{k}, B_{k+1}))$
   \ENDFOR
		\end{algorithmic}
	\end{algorithm*}
 \subsection{Algorithm and the Main Result}\label{subsec: alg_stoc_qp}
 In this section, we address \eqref{eq:III}, restated here as
 \begin{equation*} 
\begin{aligned}
\min_{x\in X} \big\{ f(x):= \mathbb{E}[\tilde{f}(x, \xi)]\big\} \text{~subject to~} c_i(x) := \mathbb{E}_{\zeta}[\tilde{c}_i(x, \zeta)] = 0, ~~~\forall i\in\{1,\dots,m\},
\end{aligned}
\end{equation*}
where $X$ is convex and closed, $f$ and  $c_i$, $i=1,2,\dotsc,m$  are smooth functions, $c(x)=(c_1(x),\dots,c_m(x))^\top$, and $\nabla c$ is the Jacobian.\footnote{One can consider $m=1$ in the first reading for simplicity.} The notation $P_X$ denotes projection onto $X$. In addition to the oracle model described in~\eqref{eq: oracle}, in this section we also have access to $\tilde \nabla c_i$ such that $\mathbb{E}[\tilde \nabla c_i(x, \zeta)] = \nabla c_i(x)$.
We assume that there are constants $\tilde{L}_{\nabla f}$, $\tilde{L}_{\nabla c}$, and $\tilde{L}_c$ such that for all $x, y$, we have
\begin{equation}\label{eq: htr_eqs}\tag{A2}
\begin{aligned}
\mathbb{E}_{\xi}\| \tilde \nabla f(x, \xi) - \tilde \nabla f(y, \xi) \|^2 &\leq \tilde L_{ \nabla f}^2 \| x - y\|^2, 
\\
\mathbb{E}_{\zeta}\| \tilde \nabla c(x, \zeta) - \tilde \nabla c(y, \zeta) \|^2 &\leq \tilde L_{ \nabla c}^2 \| x - y\|^2, 
\\
\mathbb{E}_{\zeta}\| \tilde c(x, \zeta) - \tilde c(y, \zeta) \|^2 &\leq  \tilde L_{c}^2 \| x - y\|^2.
\end{aligned}
\end{equation}
These conditions are stronger than mere smoothness of $f, c$ but are necessary for variance reduction in general~\citep{arjevani2022lower}. 
Other recent works (for example, \citep{boob2022stochastic}) do not use this assumption but obtain a slightly worse complexity result (see Section~\ref{subsec: relworks}).

Algorithm \ref{algorithm:stoc_lalm} is based on the quadratic penalty function
\begin{equation*}
Q_{\rho}(x) = f(x) + \frac{\rho}{2} \sum_{i=1}^m (c_i(x))^2.
\end{equation*}
The variance reduced estimator $g_{k+1}$ is based on STORM ~\citep{cutkosky2019momentum}.
The quadratic terms $(c_i(x))^2$ are not in the suitable form to apply SGD due to their \emph{compositional} structure.
However, it is a special form for which simply using independent samples can give an unbiased sample for the gradient. (This observation appeared in the recent independent work~\citep{li2023stochastic}.)
Let us define the stochastic oracle
\begin{align}
\tilde{\nabla}Q_{\rho}(x, B)= \tilde{\nabla}f(x, \xi^{0}) + \rho \sum_{i=1}^m \tilde \nabla c_{i}(x, \zeta^{1}) \tilde c_i(x, \zeta^{2})\label{eq: sor1}
\end{align}
 where $B = (\xi^0, \zeta^{1}, \zeta^{2}) \in \Xi \times \Zeta^2$ with $\zeta^{1}$ and $\zeta^{2}$ i.i.d. 
 We then have that $\mathbb{E}[\tilde{\nabla}f(x, \xi^0)] = \nabla f(x_k)$. Additionally, $\forall i =1,2,\dotsc,m$, we have
\begin{align*}
\mathbb{E}_{\zeta^{1}, \zeta^{2}} [\tilde \nabla c_i(x, \zeta^{1}) \tilde c_i(x, \zeta^{2})] 
&= \mathbb{E}_{\zeta^{1}}[\tilde \nabla c_i(x, \zeta^{1})] \mathbb{E}_{\zeta^{2}}[\tilde c_i(x, \zeta^{2})] \\
&=\nabla c_i(x)c_i(x),
\end{align*}
where the first step is by independence of $\zeta^{1}$ and $\zeta^{2}$. Hence, we have $\mathbb{E}\tilde \nabla Q_{\rho}(x) = \nabla Q_\rho(x)$.
We assume that there are positive constants $\sigma_f$, $\sigma_{\nabla c}$, and $\sigma_c$ such that
\begin{equation}\label{eq: asp_gen2}\tag{A3}
\begin{aligned}
\mathbb{E} \| \tilde \nabla f(x, \xi) - \nabla f(x) \|^2 &\leq \sigma_f^2,\\
\mathbb{E} \| \tilde \nabla c(x, \xi) - \nabla c(x) \|^2 &\leq \sigma_{\nabla c}^2, \\\mathbb{E} \|  c(x, \xi) - c(x) \|^2 &\leq \sigma_c^2,
\end{aligned}
\end{equation}
a set of assumptions also made in~\cite[eq. (2.9)]{boob2022stochastic}.
Other assumptions include the following:
\begin{equation}\label{eq: asp_gen3}\tag{A4}
\begin{aligned}
&\| \nabla c(x) \| \leq C_{\nabla c},~~~ \|c(x) \| \leq C_{c},\\
&\| \tilde \nabla c(x, \zeta) \| \leq \tilde C_{\nabla c},~~~ \|c(x, \zeta) \| \leq \tilde C_{c},\\
&|f(x_k)|\leq B_f,~~~ \|\nabla f(x)\|\leq C_{\nabla f},\\
&Q_{\rho}(x)\geq \underline Q > -\infty ~~\forall \rho, x.
\end{aligned}
\end{equation}
These boundedness assumptions are widespread for nonconvex constrained problems, even with deterministic objective and constraints, see for example \citep{sahin2019inexact,lin2022complexity,li2021rate}.

Under these assumptions, we have that $x\mapsto Q_{\rho_k}(x)$ is $L_{\rho_k}$-smooth with $L_{\rho_k} = \rho_k(L_{\nabla f} + m(C_c L_{\nabla c} + C_{\nabla c} L_c))$ 
which, for example, we can see by direct calculation on the gradient.
For variance reduction, we use 
\begin{equation}\label{eq: stoc_lips}
\begin{aligned}
\mathbb{E}_B \| \tilde \nabla Q_{\rho_k}(x, B) - \tilde \nabla Q_{\rho_k}(y, B) \|^2 \leq \tilde L_{\rho_k}^2 \| x-y\|^2,
\end{aligned}
\end{equation}
where $\tilde L_{\rho}^2  = \tilde{L} \rho^2, \; \tilde{L} := 4\tilde L_{\nabla f}^2 + 4m^2 (\tilde C_c^2 \tilde L_{\nabla c}^2 + \tilde C_{\nabla c}^2 \tilde L_c^2)$, with $\rho_k\geq 1$.
This can be shown the same way as the $L_{\rho}$, by using \eqref{eq: htr_eqs} and \eqref{eq: asp_gen3} (see also \eqref{eq: uew3} and \eqref{eq: vpt4}). 

We also use a generalization of the full rank assumption on the Jacobian. Recall that without the set inclusion constraint $x \in X$, this means $\| \nabla c(x_k)^\top c(x_k) \| \geq \delta |c(x_k)|$.  
With constraints, we assume
\begin{equation}\label{eq: asp_gen4}\tag{A5}
\dist(\nabla c(x_k)^\top c(x_k), -N_X(x_k)) \geq \delta \|c(x_k)\|.
\end{equation}
This assumption, despite being strong, is common in the existing literature of deterministic or stochastic algorithms with nonconvex functional constraints, see e.g.,~\cite{sahin2019inexact,li2021rate,lin2022complexity,li2023stochastic}. Let us note that this is implied by assuming LICQ in the whole space. 
\cite{bolte2018nonconvex} make a similar assumption that they term uniform regularity.~\cite{lin2022complexity} considered the relationship of this assumption with Kurdyka-{\L}ojasiewicz and constraint qualifications.

We state now the main result for this section. 
\begin{theorem}\label{eq: asb4}
Let the assumptions in~\eqref{eq: htr_eqs}, \eqref{eq: asp_gen2}, \eqref{eq: asp_gen3}, \eqref{eq: asp_gen4} hold. Set the parameters of Algorithm~\ref{algorithm:stoc_lalm} as
\[
\eta_k = \frac{1}{9\tilde L \rho(k+1)^{3/5}},
\rho_k = \rho k^{1/5}, \alpha_{k+1} = \frac{72}{81(k+1)^{4/5}},
\]
for some $\rho>1$. Then, there exists $\lambda$ such that
\begin{align*}
    \mathbb{E}\dist(\nabla f(x_{\hat k+1}) + \nabla c(x_{\hat k+1})^\top \lambda, -N_X(x_{\hat k +1})) &\leq \varepsilon,\\
    \mathbb{E}\|c(x_{\hat k + 1})\|&\leq\varepsilon.
\end{align*}
with number of iterations $K$ of Algorithm~\ref{algorithm:stoc_lalm} bounded by $\widetilde{O}(\varepsilon^{-5})$ and $\hat k$ selected uniformly at random from $\{1,\dots, K\}$.
\end{theorem}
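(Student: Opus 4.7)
The plan mirrors the single-iteration-potential strategy of Section~\ref{sec: main_linear} but must accommodate three features absent from the linearly constrained case: an increasing penalty parameter $\rho_k$, the compositional stochastic constraint accessed through the independent-sample oracle of~\eqref{eq: sor1}, and the set constraint $x\in X$ enforced by the projection $P_X$. Throughout, the quadratic penalty function $Q_{\rho_k}(x_k)$ serves as the primary merit function, augmented by a term tracking the variance of the STORM estimator $g_k$. In contrast to Section~\ref{sec: main_linear}, the lower bound $Q_{\rho_k}\ge\underline Q$ is directly available from~\eqref{eq: asp_gen3}, so the intricate lower-boundedness argument of Lemma~\ref{lem: potential_lb} is not needed here.

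First I would prove a one-step descent inequality for $Q_{\rho_k}$. Using the $L_{\rho_k}$-smoothness of $Q_{\rho_k}$ (with $L_{\rho_k}\asymp\rho_k$), the optimality condition of the projection $x_{k+1}=P_X(x_k-\eta_k g_k)$, and Young's inequality to split the estimator error, one obtains
\begin{equation*}
Q_{\rho_k}(x_{k+1}) \le Q_{\rho_k}(x_k) - \frac{1-\eta_k L_{\rho_k}}{2\eta_k}\|x_{k+1}-x_k\|^2 + \frac{\eta_k}{2}\|g_k-\nabla Q_{\rho_k}(x_k)\|^2.
\end{equation*}
The parameter schedule yields $\eta_k L_{\rho_k}\asymp k^{-2/5}\ll 1$, so the quadratic-increment coefficient is effectively $-1/(2\eta_k)$. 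When chaining this bound across iterations, replacing $Q_{\rho_k}(x_{k+1})$ by $Q_{\rho_{k+1}}(x_{k+1})$ produces a penalty-drift term $\tfrac{1}{2}(\rho_{k+1}-\rho_k)\|c(x_{k+1})\|^2$, which is summable by $\|c(x)\|\le C_c$ from~\eqref{eq: asp_gen3} together with $\rho_{k+1}-\rho_k\asymp k^{-4/5}$.

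Next I would derive a STORM-type recursion for $\E\|g_{k+1}-\nabla Q_{\rho_{k+1}}(x_{k+1})\|^2$. The new twist relative to standard STORM is that the target itself drifts with $\rho_k$, so the usual one-step analysis must separately absorb the target drift $\|\nabla Q_{\rho_{k+1}}(x)-\nabla Q_{\rho_k}(x)\|\le(\rho_{k+1}-\rho_k)C_{\nabla c}C_c$ via~\eqref{eq: asp_gen3}, the mean-square increment $\E_B\|\tilde\nabla Q_{\rho_k}(x_{k+1},B)-\tilde\nabla Q_{\rho_k}(x_k,B)\|^2\le\tilde L_{\rho_k}^2\|x_{k+1}-x_k\|^2$ via~\eqref{eq: stoc_lips}, and an oracle variance of order $\alpha_{k+1}^2\rho_{k+1}^2$ from~\eqref{eq: asp_gen2} scaled by the penalty. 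Combining these yields, for a universal constant $C$,
\begin{equation*}
\E\|g_{k+1}-\nabla Q_{\rho_{k+1}}(x_{k+1})\|^2 \le (1-\alpha_{k+1})^2\E\|g_k-\nabla Q_{\rho_k}(x_k)\|^2 + C\alpha_{k+1}^2\rho_{k+1}^2 + C\tilde L_{\rho_k}^2\|x_{k+1}-x_k\|^2 + C\alpha_{k+1}^{-1}(\rho_{k+1}-\rho_k)^2.
\end{equation*}
I would then combine the descent and variance bounds into a potential $\Phi_k=Q_{\rho_k}(x_k)+\tfrac{\gamma}{\alpha_k}\E\|g_k-\nabla Q_{\rho_k}(x_k)\|^2$ with $\gamma$ large enough, obtain $\E\Phi_{k+1}\le\E\Phi_k - c_1\eta_k^{-1}\E\|x_{k+1}-x_k\|^2 - c_2\alpha_k\E\|g_k-\nabla Q_{\rho_k}(x_k)\|^2 + e_k$ with $\sum_k e_k<\infty$, and telescope using $Q_{\rho_k}\ge\underline Q$.

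A standard weighted-averaging argument over $\hat k\in\{1,\dots,K\}$ then yields $\E\|x_{\hat k+1}-x_{\hat k}\|/\eta_{\hat k}$ and $\E\|g_{\hat k}-\nabla Q_{\rho_{\hat k}}(x_{\hat k})\|$ both of order $\widetilde O(K^{-1/5})$. Combining with the projection KKT $g_k+\eta_k^{-1}(x_{k+1}-x_k)\in-N_X(x_{k+1})$ and the smoothness of $Q_{\rho_k}$ delivers $\E\dist(\nabla Q_{\rho_{\hat k+1}}(x_{\hat k+1}),-N_X(x_{\hat k+1}))=\widetilde O(K^{-1/5})$, which is exactly the first conclusion after setting $\lambda:=\rho_{\hat k+1}c(x_{\hat k+1})$. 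The feasibility bound follows from~\eqref{eq: asp_gen4}: since $\nabla Q_{\rho_{\hat k+1}}(x_{\hat k+1})=\nabla f(x_{\hat k+1})+\rho_{\hat k+1}\nabla c(x_{\hat k+1})^\top c(x_{\hat k+1})$ and $\|\nabla f\|\le C_{\nabla f}$, regularity gives $\delta\rho_{\hat k+1}\E\|c(x_{\hat k+1})\|\le\widetilde O(K^{-1/5})+C_{\nabla f}$, so dividing by $\rho_{\hat k+1}\asymp K^{1/5}$ yields $\E\|c(x_{\hat k+1})\|=\widetilde O(K^{-1/5})$. Setting both rates equal to $\varepsilon$ gives $K=\widetilde O(\varepsilon^{-5})$. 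The hardest part will be the drift bookkeeping in the STORM recursion: because the target $\nabla Q_{\rho_k}$ changes each iteration, one must manage a Young-inequality cross term with a $1/\alpha_{k+1}$ factor, and the exponents $3/5$, $1/5$, $4/5$ in the schedule are forced precisely by the requirement that the four error terms above be simultaneously summable when weighted by $\gamma/\alpha_k$ in $\Phi_k$.
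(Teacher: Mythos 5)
Your overall architecture — descent on $Q_{\rho_k}$ via projection optimality, a STORM recursion with a drifting target, a combined potential, telescoping against $\underline Q$, and then $\lambda=\rho_{\hat k}c(x_{\hat k+1})$ with \eqref{eq: asp_gen4} for feasibility — is exactly the paper's. But there is one genuine gap, and it sits at the step the paper spends an entire lemma on. You claim the penalty-drift term $\tfrac12(\rho_{k+1}-\rho_k)\|c(x_{k+1})\|^2$ is summable ``by $\|c(x)\|\le C_c$ together with $\rho_{k+1}-\rho_k\asymp k^{-4/5}$.'' It is not: the sum telescopes to $\sum_{k=1}^K(\rho_{k+1}-\rho_k)C_c^2=(\rho_{K+1}-\rho_1)C_c^2\asymp K^{1/5}$, which diverges. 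Carrying this $K^{1/5}$ through your telescoping and dividing by $K\eta_K\asymp K^{2/5}$ would only give $\frac1K\sum_k\E[\dist^2]=\widetilde O(K^{-1/5})$, hence $\E[\dist]=\widetilde O(K^{-1/10})$ and a complexity of $\widetilde O(\varepsilon^{-10})$, not $\widetilde O(\varepsilon^{-5})$.

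The paper's fix (Lemma~\ref{lem: sps2}) is the nontrivial ingredient you are missing: instead of the crude bound $\|c\|\le C_c$, it invokes the regularity condition \eqref{eq: asp_gen4} to get $\|c(x_{k+1})\|\le\frac{1}{\rho_k\delta}\bigl(\|\nabla f(x_{k+1})\|+\dist(\nabla f(x_{k+1})+\rho_k\nabla c(x_{k+1})^\top c(x_{k+1}),-N_X(x_{k+1}))\bigr)$, so that $(\rho_{k+1}-\rho_k)\|c(x_{k+1})\|^2\lesssim k^{-6/5}\bigl(C_{\nabla f}^2+\dist^2(\cdots)\bigr)$; the residual $\dist^2$ term is then controlled by reusing the one-iteration inequality itself with decaying weights $b_k\asymp k^{-3/5}$, under which the crude $C_c^2$ bound \emph{does} become summable ($b_k|\rho_{k+1}-\rho_k|C_c^2\asymp k^{-7/5}$), yielding $\sum_k\E|Q_{\rho_k}(x_{k+1})-Q_{\rho_{k+1}}(x_{k+1})|=O(1)$. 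Without this bootstrap your argument does not deliver the stated rate. (Two smaller points: your drift term $C\alpha_{k+1}^{-1}(\rho_{k+1}-\rho_k)^2\asymp k^{-4/5}$ in the STORM recursion also needs the correct $1/(\rho_{k+1}^2\eta_k)$ scaling inside the potential to become summable, as in the paper's Lemma~\ref{cor: storm_qp}; and the feasibility step should divide by $\rho_k$ \emph{inside} the average over $k$ rather than treating $\rho_{\hat k+1}\asymp K^{1/5}$ for a uniformly random $\hat k$.)
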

\begin{remark}
    This is an iteration complexity result that directly translates to $\widetilde{O}(\varepsilon^{-5})$ sample complexity and arithmetic complexity. This is because each iteration of Algorithm~\ref{algorithm:stoc_lalm} requires one sample of each stochastic function $f, 
    (c_i)_{i=1}^m$ and each iteration only involves simple projections to $X$ and vector operations.
\end{remark}
\begin{remark}
    It is worth noting that it is straightforward to get rid of the logarithmic terms in the above bound by using parameters $\eta_k, \rho_k$ that depend on the final iterate. However, this would require an initial preprocessing stage to get a near-feasible point for getting the best complexity. \cite{shi2022momentum} used this approach to study the deterministic constraints setting.
\end{remark}
\begin{remark}
    We state our results for the constrained case for simplicity. The extension to the \emph{proximal case}, where we have the additional proper convex lower semicontinuous function instead of the constraint $x\in X$,  is straightforward with our analysis template.
\end{remark}
\subsection{Analysis}
As in the previous section, we start with the one iteration analysis of the algorithm for which we suppress some of the universal constants for readability. Statement of the lemma with details appears in Sec. \ref{subsec: one_it_stoch}.
\begin{lemma}\label{eq: one_it_ineq1}
Under the assumptions in~\eqref{eq: htr_eqs}, \eqref{eq: asp_gen2}, \eqref{eq: asp_gen3}, \eqref{eq: asp_gen4} and the parameters (see also \eqref{eq: stoc_lips}, \eqref{eq: asp_gen3}) 
\begin{equation*}
\begin{aligned}
\eta_k &= \frac{1}{9\tilde L \rho(k+1)^{3/5}},~~~
\rho_k = \rho k^{1/5},\\ \alpha_{k+1} &= \frac{72}{81(k+1)^{4/5}},
\end{aligned}
\end{equation*}
for some constant $\rho>1$, we have that
\begin{align*}
 &\frac{\eta_k}{72}\E \dist^2(\nabla f(x_{k+1}) + \rho_{k}\nabla c(x_{k+1})^\top c({x_{k+1}}), - N_X(x_{k+1})) \\
 &\leq  \E[Y_{k}- Y_{k+1} + |Q_{\rho_k}(x_{k+1}) - Q_{\rho_{k+1}}(x_{k+1})|] + \mathcal{E}_{k+1},
\end{align*}
where 
\begin{align*}
Y_{k+1} &= Q_{\rho_{k+1}}(x_{k+1}) + \frac{1}{72\tilde L^2 \rho_{k+1}^2 \eta_k} \| g_{k+1} - \nabla Q_{\rho_{k+1}}(x_{k+1}) \|^2,\\
\mathcal{E}_{k+1} &= O\left(\frac{(\rho_k-\rho_{k+1})^2}{\rho_{k+1}^2\eta_k}\right) + O\left( \frac{\alpha_{k+1}^2}{\eta_k}\right).
\end{align*}
\end{lemma}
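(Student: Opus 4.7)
The plan is to combine four ingredients: a smoothness-based descent inequality for $Q_{\rho_k}$ using the projection update, a STORM recursion for the variance $\|g_k-\nabla Q_{\rho_k}(x_k)\|^2$, the first-order optimality of the projection to convert iterate differences into the stationarity distance on the left-hand side, and a telescoping adjustment to account for the fact that $\rho_k\ne\rho_{k+1}$.

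First I would apply $L_{\rho_k}$-smoothness of $Q_{\rho_k}$ (where $L_{\rho_k}\asymp\rho_k$) together with the projection optimality $\langle g_k, x_{k+1}-x_k\rangle + \eta_k^{-1}\|x_{k+1}-x_k\|^2 \le 0$ and a Young split of the cross term $\langle \nabla Q_{\rho_k}(x_k)-g_k,\, x_{k+1}-x_k\rangle$. Since $\eta_k \lesssim 1/(\tilde L\rho_k)$, the $\eta_k^{-1}$ from the projection dominates $L_{\rho_k}/2$ and the Young coefficient, yielding an inequality of the form
\begin{equation*}
\E Q_{\rho_k}(x_{k+1}) \le \E Q_{\rho_k}(x_k) - \frac{c_1}{\eta_k}\E\|x_{k+1}-x_k\|^2 + c_2\,\eta_k\,\E\|g_k-\nabla Q_{\rho_k}(x_k)\|^2,
\end{equation*}
for absolute constants $c_1, c_2$.

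Second, I would derive a STORM recursion for the variance. Conditioning on the history up to step $k$, the correction $\tilde\nabla Q_{\rho_{k+1}}(x_{k+1},B_{k+1}) - \tilde\nabla Q_{\rho_{k}}(x_{k},B_{k+1})$ is mean zero, so expanding $\|g_{k+1}-\nabla Q_{\rho_{k+1}}(x_{k+1})\|^2$ and taking conditional expectation kills the cross terms. The mean-square smoothness \eqref{eq: stoc_lips} bounds the $\|x_{k+1}-x_k\|^2$ contribution by $\tilde L_{\rho_{k+1}}^2\|x_{k+1}-x_k\|^2$, the bounded variances in \eqref{eq: asp_gen2}--\eqref{eq: asp_gen3} contribute an $O(\alpha_{k+1}^2)$ term, and the mismatch between $\nabla Q_{\rho_{k+1}}(x_k)$ and $\nabla Q_{\rho_k}(x_k)$ contributes the $O((\rho_{k+1}-\rho_k)^2)$ term that appears in $\mathcal{E}_{k+1}$. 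Dividing by $72\tilde L^2\rho_{k+1}^2\eta_k$ and combining with the descent inequality via the potential $Y_k$, the decomposition $Q_{\rho_{k+1}}(x_{k+1}) - Q_{\rho_k}(x_k) = [Q_{\rho_{k+1}}(x_{k+1}) - Q_{\rho_k}(x_{k+1})] + [Q_{\rho_k}(x_{k+1})-Q_{\rho_k}(x_k)]$ moves the first bracket to the right-hand side as the stated $|Q_{\rho_k}(x_{k+1})-Q_{\rho_{k+1}}(x_{k+1})|$ term, while the second bracket is absorbed by descent. Tuning so that $(1-\alpha_{k+1})^2/(\tilde L^2\rho_{k+1}^2\eta_k) \le 1/(\tilde L^2\rho_k^2\eta_{k-1})$ turns the variance piece into a clean telescope, and preserves a reserve of $\|x_{k+1}-x_k\|^2$ with negative coefficient $\Omega(1/\eta_k)$ plus a reserve of $\|g_k-\nabla Q_{\rho_k}(x_k)\|^2$ with negative coefficient $\Omega(\eta_k)$.

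Finally, I would extract the target distance. The projection optimality $g_k - \eta_k^{-1}(x_k - x_{k+1}) \in -N_X(x_{k+1})$ combined with the triangle inequality gives
\begin{equation*}
\dist(\nabla Q_{\rho_k}(x_{k+1}),\,-N_X(x_{k+1})) \le L_{\rho_k}\|x_{k+1}-x_k\| + \|\nabla Q_{\rho_k}(x_k)-g_k\| + \eta_k^{-1}\|x_{k+1}-x_k\|,
\end{equation*}
and squaring produces exactly the two quantities for which we reserved negative coefficients. Multiplying by $\eta_k/72$ and absorbing into the reserves yields the stated bound, using $\nabla Q_{\rho_k}(x_{k+1}) = \nabla f(x_{k+1})+\rho_k\nabla c(x_{k+1})^\top c(x_{k+1})$. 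The main obstacle is the bookkeeping: one must choose $\eta_k, \rho_k, \alpha_{k+1}$ simultaneously so that the STORM telescope holds despite $\tilde L_{\rho_k}^2$ growing in $k$, and so that the same $\|x_{k+1}-x_k\|^2$ reserve absorbs both the Young cross term of the descent step and the contribution from the distance bound with the prescribed constant $1/72$. The stated choices $\eta_k\asymp 1/(\tilde L\rho(k+1)^{3/5})$, $\rho_k = \rho k^{1/5}$, $\alpha_{k+1}\asymp (k+1)^{-4/5}$ are the precise balance that makes all three conditions compatible.
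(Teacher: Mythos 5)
Your proposal is correct and follows essentially the same route as the paper: descent lemma plus projection optimality for the primal step, the STORM recursion (paper's Lemma \ref{cor: storm_qp}) with the three error sources you identify, the split of $Q_{\rho_{k+1}}(x_{k+1})-Q_{\rho_k}(x_k)$ that produces the $|Q_{\rho_k}(x_{k+1})-Q_{\rho_{k+1}}(x_{k+1})|$ term, and extraction of the stationarity distance from the projection inclusion. The one detail to state more carefully is the telescoping condition, which must read $(1-\alpha_{k+1})^2/(72\tilde L^2\rho_{k+1}^2\eta_k)\le 1/(72\tilde L^2\rho_k^2\eta_{k-1})-\eta_k$ so that the $\Omega(\eta_k)$ variance reserve you invoke is actually carved out; this is exactly what the choice $\alpha_{k+1}=72\tilde L^2\rho_{k+1}^2\eta_k^2$ delivers.
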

\begin{remark}\label{rem: cons1}
The first term of $\mathcal{E}_{k+1}$ has the order $O(k^{-7/5})$ and the second term of $\mathcal{E}_{k+1}$ has the order $O(k^{-1})$, therefore $\sum_{k=1}^K \mathcal{E}_{k+1} = O(\log(K+1))$.
\end{remark}
\begin{proof}[Proof sketch of Theorem~\ref{eq: asb4}]
    In view of Remark \ref{rem: cons1}, it is easy to see that the only remaining piece we need on top of Lemma~\ref{eq: one_it_ineq1} is the control over the penalty parameter changes. For this, we show in Lemma~\ref{lem: sps2} that 
    \begin{equation}\label{eq: soi4}
        \sum_{k=1}^\infty \mathbb{E}|Q_{\rho_k}(x_{k+1}) - Q_{\rho_{k+1}}(x_{k+1})| = O(1).
    \end{equation}
    The main idea in this lemma is to use the estimate of Lemma~\ref{eq: one_it_ineq1} with the uniform upper bound on $\|c(x_k)\|^2$ from~\eqref{eq: asp_gen3} and take advantage of the decay of $|\rho_k - \rho_{k+1}|$ and \eqref{eq: asp_gen4} to obtain \eqref{eq: soi4}.
    Using this estimate in Lemma \ref{eq: one_it_ineq1} gives the result.
\end{proof}

\section{Extensions}\label{sec: main_ext}
In this section, we consider two extensions (with details and proofs given in Sec. \ref{subsec: ext1} and \ref{subsec: ext2}) and show how they follow by minor adjustments on our analysis. 
\subsection{Dual Variable Updates}\label{subsec: main_alm_stoc}
In the context of nonconvex optimization with nonconvex functional constraints and ALM, the standard way of incorporating dual updates is to use small step sizes and large penalty parameters to ensure boundedness of the dual variable, see~\cite{li2021rate,sahin2019inexact,shi2022momentum}. Rapidly increasing, unbounded, penalty parameter is then used to obtain feasibility guarantees.
One exception is~\cite{xie2021complexity} which, unfortunately comes with worse complexity guarantees for first-order stationarity, compared to~\cite{li2021rate,lin2022complexity}.
We considered the quadratic penalty method in the previous section for simplicity, but we show in this section that dual updates can be incorporated as done in~\cite{li2021rate,sahin2019inexact,shi2022momentum}, with small step sizes.
The modification compared to Algorithm~\ref{algorithm:stoc_lalm} consists of changing the definition of $g_{k+1}$ and incorporating a dual update step. In particular, we will change the step for $g_{k+1}$ as
\begin{align*}
    g_{k+1} = \tilde\nabla Q_{\rho_{k+1}}(x_{k+1}, \lambda_{k+1}, B_{k+1}) + (1-\alpha_{k+1})(g_k - \tilde\nabla Q_{\rho_{k}}(x_{k}, \lambda_k, B_{k+1})),
\end{align*}
where
\begin{align*}
\tilde{\nabla}Q_{\rho}(x, \lambda, B) = \tilde{\nabla}f(x, \xi^{0}) + \sum_{i=1}^m \tilde \nabla c_i(x, \zeta^{1}) \lambda_{i}  + \rho \sum_{i=1}^m \tilde \nabla c_{i}(x, \zeta^{1}) \tilde c_i(x, \zeta^{2}).
\end{align*}
We also add the dual update step as
\begin{align}
    \lambda_{k+2, i} = \lambda_{k+1, i} + \gamma_{k+1, i} \tilde c_i(x_{k+1}, \zeta^{2}_{k+1}),\label{eq: alg_vr2_step4}
\end{align}
for all $i\in\{1,\dots,m\}$.
As alluded earlier, dual steps generally require a decaying step size as $\gamma_{k+1}$ (or clipping the contribution of the previous dual parameter by a constant amount as in \cite{lu2022single}) for getting the best-known guarantees. 
\begin{theorem}\label{th: th_ext_1_main}
For the algorithm described in Section~\ref{subsec: main_alm_stoc}, let 
\begin{align}
\eta_k &= \frac{1}{9\tilde L \rho(k+1)^{3/5}}, ~~
\rho_k = \rho k^{1/5},  \notag \\
\gamma_{k,i} &= \frac{\gamma}{k(\log(k+1))^2|\tilde c_{i}(x_{k}, \zeta^{2}_k)|},~~\alpha_{k+1} =  \frac{72}{81(k+1)^{4/5}}\notag
\end{align}
for some constant $\rho>1$, $\gamma>0$. Also let the assumptions in \eqref{eq: htr_eqs}, \eqref{eq: asp_gen2}, \eqref{eq: asp_gen3}, \eqref{eq: asp_gen4} hold. We have that there exists $\lambda$ such that
\begin{equation*}
    \mathbb{E}\dist(\nabla f(x_{\hat k+1}) +  \nabla c(x_{\hat k+1})^\top \lambda, -N_X(x_{\hat k +1})) \leq \varepsilon,
\end{equation*}
with number of iterations bounded by $\widetilde{O}(\varepsilon^{-5})$. Moreover, we also have that $\mathbb{E}\|c(x_{\hat k + 1})\|\leq\varepsilon$.
\end{theorem}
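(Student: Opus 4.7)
The plan is to mirror the proof of Theorem~\ref{eq: asb4} with the augmented Lagrangian $\mathcal{L}_{\rho_k}(x_k, \lambda_k) = f(x_k) + \langle \lambda_k, c(x_k)\rangle + \tfrac{\rho_k}{2}\|c(x_k)\|^2$ replacing the quadratic penalty $Q_{\rho_k}(x_k)$ inside the potential $Y_k$. The crucial observation is that the specific form of the dual step size is tailored so that
\[
|\lambda_{k+2,i} - \lambda_{k+1,i}| \;=\; \gamma_{k+1,i}\,|\tilde c_i(x_{k+1}, \zeta^{2}_{k+1})| \;=\; \frac{\gamma}{(k+1)(\log(k+2))^2}
\]
holds pointwise on every sample path, so $\sum_k \|\lambda_{k+2}-\lambda_{k+1}\|$ is deterministically summable and the sequence $\{\lambda_k\}$ is uniformly bounded by a constant $C_\lambda$ depending only on $\lambda_1$, $m$, and $\gamma$. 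This is the key technical feature that makes the whole analysis work at a constant bias over Theorem~\ref{eq: asb4}.

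\textbf{Adapting the one-iteration analysis.} Since $\nabla_x\mathcal{L}_\rho(x,\lambda) = \nabla f(x) + \nabla c(x)^\top\lambda + \rho\nabla c(x)^\top c(x)$, the additional $\lambda$-term only contributes smoothness of order $\|\lambda\|L_{\nabla c} \leq C_\lambda L_{\nabla c}$, so $x\mapsto \mathcal{L}_{\rho_k}(x,\lambda_k)$ retains smoothness constant $O(\rho_k)$. Similarly, $\tilde\nabla Q_{\rho_k}(x,\lambda_k,B)$ still satisfies \eqref{eq: stoc_lips} with the same $\tilde L_{\rho_k}^2 = O(\rho_k^2)$ up to an additive constant involving $C_\lambda^2\tilde L_{\nabla c}^2$, using \eqref{eq: htr_eqs} and \eqref{eq: asp_gen3}. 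Consequently the derivation of Lemma~\ref{eq: one_it_ineq1} transfers essentially verbatim with $Q_\rho$ replaced by $\mathcal{L}_\rho$, and the left-hand side of the one-step inequality reads $\tfrac{\eta_k}{72}\E\dist^2(\nabla f(x_{k+1}) + \nabla c(x_{k+1})^\top[\lambda_{k+1}+\rho_k c(x_{k+1})], -N_X(x_{k+1}))$. This identifies the candidate Lagrange multiplier as $\lambda = \lambda_{\hat k+1} + \rho_{\hat k}\, c(x_{\hat k+1})$.

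\textbf{Telescoping and the new discrepancies.} Telescoping $\sum_k \E[Y_k - Y_{k+1}]$ now generates two kinds of gaps between consecutive potentials: (i) the penalty-parameter change $\mathcal{L}_{\rho_{k+1}}(x_{k+1},\lambda_{k+1}) - \mathcal{L}_{\rho_k}(x_{k+1},\lambda_{k+1}) = \tfrac{\rho_{k+1}-\rho_k}{2}\|c(x_{k+1})\|^2$, handled exactly as in Lemma~\ref{lem: sps2} using $\|c(x_{k+1})\|\leq C_c$ from \eqref{eq: asp_gen3}, to yield a total of $O(1)$; and (ii) the dual change $\mathcal{L}_{\rho_{k+1}}(x_{k+1},\lambda_{k+2}) - \mathcal{L}_{\rho_{k+1}}(x_{k+1},\lambda_{k+1}) = \langle \lambda_{k+2}-\lambda_{k+1}, c(x_{k+1})\rangle$, whose absolute value is bounded by $C_c \cdot \gamma/((k+1)(\log(k+2))^2)$ in view of Paragraph~1, and whose partial sums are therefore $O(1)$. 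Lower boundedness of $\mathcal{L}_{\rho_k}(x_k,\lambda_k) \geq \underline Q - C_\lambda C_c$ follows from \eqref{eq: asp_gen3} and the dual bound, so the argument of Theorem~\ref{eq: asb4} produces
\[
\frac{1}{K}\sum_{k=1}^K \eta_k\, \E\dist^2\!\big(\nabla f(x_{k+1}) + \nabla c(x_{k+1})^\top(\lambda_{k+1}+\rho_k c(x_{k+1})), -N_X(x_{k+1})\big) = \widetilde O(K^{-1}).
\]

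\textbf{Rate and feasibility.} With $\eta_k = \Theta(k^{-3/5})$, averaging at $\hat k$ uniform on $\{1,\dots,K\}$ yields $\widetilde O(K^{-2/5})$ in squared distance, hence $\widetilde O(K^{-1/5})$ in expected distance via Jensen, giving $K = \widetilde O(\varepsilon^{-5})$. For feasibility, the triangle inequality on the stationarity measure combined with \eqref{eq: asp_gen4} yields $\rho_{\hat k}\delta\,\E\|c(x_{\hat k+1})\| \leq \varepsilon + C_{\nabla f} + C_{\nabla c}C_\lambda = O(1)$; since $\rho_{\hat k} = \Theta(\varepsilon^{-1})$ at $k\asymp \varepsilon^{-5}$, we obtain $\E\|c(x_{\hat k+1})\| \leq \varepsilon$. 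The main obstacle throughout is controlling the interaction between the growing penalty parameter $\rho_k$ and the stochastic dual update: this is resolved entirely by the choice of $\gamma_{k,i}$ that cancels $|\tilde c_i|$ in the denominator and produces the summable, deterministic dual increment of Paragraph~1, which is the genuine novelty beyond Theorem~\ref{eq: asb4}.
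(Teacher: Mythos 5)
Your proposal is correct and follows essentially the same route as the paper: uniform boundedness of the dual iterates from the summable, sample-path-deterministic increments $\gamma_{k,i}|\tilde c_i(x_k,\zeta^2_k)|=\gamma/(k(\log(k+1))^2)$, absorption of the bounded multiplier into the smoothness/variance constants so the one-iteration lemma and variance recursion carry over with $Q_{\rho_k}(\cdot,\lambda_k)$ in place of $Q_{\rho_k}$, telescoping with the two summable discrepancies (penalty growth and dual change), and feasibility via \eqref{eq: asp_gen4}. The only blemishes are cosmetic: the multiplier index should track the $\lambda_k$ actually used in $g_k$ (so the stationarity certificate is $\lambda_k+\rho_k c(x_{k+1})$ rather than $\lambda_{k+1}+\rho_k c(x_{k+1})$, an immaterial shift since $\|\lambda_{k+1}-\lambda_k\|$ is summable), and the feasibility step should average $1/(\rho_k\delta)$ over $k\le K$ rather than treat $\rho_{\hat k}$ as deterministically $\Theta(\varepsilon^{-1})$ — both are exactly what the paper does and neither affects the rate.
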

\subsection{Deterministic Functional Constraints}\label{subsec: main_det_nonlinear}
In this section, we consider the case when the constraints are deterministic. In this case, we set the parameters accordingly to get the complexity $\tilde O(\varepsilon^{-4})$.

\begin{theorem}\label{th: th_determ_nonl}
For Algorithm~\ref{algorithm:stoc_lalm}, set
\begin{align}
\eta_k &= \frac{1}{9\tilde L \rho(k+1)^{1/2}}, \quad
\rho_k = \rho k^{1/4}, \notag \\
\alpha_{k+1} &=  \frac{72}{81(k+1)^{1/2}},\notag
\end{align}
for some constant $\rho>1$. Also let the assumptions in \eqref{eq: htr_eqs}, \eqref{eq: asp_gen2}, \eqref{eq: asp_gen3}, \eqref{eq: asp_gen4} hold with a deterministic $c(x)$. We have that there exists $\lambda$ such that
\begin{align*}
    \mathbb{E}\dist(\nabla f(x_{\hat k+1}) +  \nabla c(x_{\hat k+1})^\top \lambda, -N_X(x_{\hat k +1})) &\leq \varepsilon,\\
    \mathbb{E}\|c(x_{\hat k + 1})\|&\leq\varepsilon,
\end{align*}
with number of iterations bounded by $\widetilde{O}(\varepsilon^{-4})$.
\end{theorem}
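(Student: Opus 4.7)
The plan is to mirror the proof structure of Theorem~\ref{eq: asb4}, exploiting the simplification that with deterministic $c$ and $\nabla c$ the variance of $\tilde\nabla Q_{\rho_k}(x,B)$ comes only from the stochastic gradient of $f$ and is therefore bounded by $\sigma_f^2$ uniformly in $\rho_k$, rather than growing as $\Theta(\rho_k^2)$ as in the stochastic-constraint case. This stronger variance bound is precisely what permits the more aggressive schedule $\rho_k\asymp k^{1/4}$ and $\eta_k\asymp k^{-1/2}$ stated in the theorem, and ultimately yields the improved $\widetilde O(\varepsilon^{-4})$ complexity.

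The first step is to derive the deterministic-constraint analog of Lemma~\ref{eq: one_it_ineq1} using the same potential
\[
Y_{k+1}=Q_{\rho_{k+1}}(x_{k+1})+\frac{1}{72\tilde L^2\rho_{k+1}^2\eta_k}\|g_{k+1}-\nabla Q_{\rho_{k+1}}(x_{k+1})\|^2.
\]
The smoothness descent for $Q_{\rho_k}$ and the projected-step stationarity inequality carry over verbatim; the only change is in the STORM recursion, whose variance driver is now $\E\|\tilde\nabla Q_{\rho_k}(x,B)-\nabla Q_{\rho_k}(x)\|^2=O(\sigma_f^2)$ instead of $O(\rho_k^2)$. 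Combined with the $1/\rho_{k+1}^2$ weighting in $Y_{k+1}$, the residual term of $\mathcal{E}_{k+1}$ becomes $O(\alpha_{k+1}^2\sigma_f^2/(\rho_{k+1}^2\eta_k))$ rather than the $O(\alpha_{k+1}^2/\eta_k)$ of Lemma~\ref{eq: one_it_ineq1}. Substituting the stated parameters gives
\[
\frac{(\rho_k-\rho_{k+1})^2}{\rho_{k+1}^2\eta_k}=O(k^{-3/2})\quad\text{and}\quad \frac{\alpha_{k+1}^2}{\rho_{k+1}^2\eta_k}=O(k^{-1}),
\]
so $\sum_{k=1}^K\mathcal{E}_{k+1}=O(\log K)$.

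Next I would adapt Lemma~\ref{lem: sps2} to bound $\sum_{k=1}^K\E|Q_{\rho_k}(x_{k+1})-Q_{\rho_{k+1}}(x_{k+1})|=\tfrac12\sum_k|\rho_k-\rho_{k+1}|\,\|c(x_{k+1})\|^2$. The bootstrap uses~\eqref{eq: asp_gen4} together with $\|\nabla f\|\leq C_{\nabla f}$ to write $\|c(x_{k+1})\|\leq \delta^{-1}\rho_{k+1}^{-1}(r_{k+1}+C_{\nabla f})$, where $r_{k+1}$ denotes the first-order residual at $x_{k+1}$. Since $|\rho_k-\rho_{k+1}|/\rho_{k+1}^2=O(k^{-5/4})$ here, the $C_{\nabla f}$ contribution sums to $O(1)$ and the $r_{k+1}^2$ contribution is absorbed into the left-hand side of the telescoped inequality (because $k^{-5/4}$ is dominated by $\eta_k\asymp k^{-1/2}$). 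Combining with $Q_{\rho_{k+1}}\geq\underline Q$ to lower-bound $Y_{K+1}$, the telescoped one-iteration inequality yields
\[
\sum_{k=1}^K\eta_k\,\E\,\dist^2\!\bigl(\nabla f(x_{k+1})+\rho_k\nabla c(x_{k+1})^\top c(x_{k+1}),\,-N_X(x_{k+1})\bigr)=O(\log K).
\]
Using $\eta_k\geq\eta_K=\Theta(K^{-1/2})$ to convert the weighted sum and sampling $\hat k$ uniformly from $\{1,\dots,K\}$ gives $\E\,\dist^2(\cdot)=\widetilde O(K^{-1/2})$, hence by Jensen $\E\,\dist(\cdot)=\widetilde O(K^{-1/4})$ with $\lambda=\rho_{\hat k}c(x_{\hat k+1})$. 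Invoking~\eqref{eq: asp_gen4} once more yields $\E\|c(x_{\hat k+1})\|=\widetilde O(\rho_{\hat k}^{-1})=\widetilde O(K^{-1/4})$. Setting both quantities $\leq\varepsilon$ gives $K=\widetilde O(\varepsilon^{-4})$.

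The main obstacle is the careful bookkeeping of how the STORM error interacts with the growing penalty $\rho_k$: one must verify that with deterministic constraints the $1/\rho_{k+1}^2$ weight in the variance term of $Y_{k+1}$ leaves a genuine \emph{extra} factor of $\rho_{k+1}^{-2}$ in $\mathcal{E}_{k+1}$, rather than being cancelled by a $\rho_{k+1}^2$ variance of the compositional term as in the stochastic case. This extra factor is exactly what makes the larger step size $\eta_k\asymp k^{-1/2}$ admissible while keeping $\sum_k\mathcal{E}_{k+1}$ only logarithmic in $K$.
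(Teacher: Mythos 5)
Your proposal is correct and follows essentially the same route as the paper: the same potential $Y_{k+1}=Q_{\rho_{k+1}}(x_{k+1})+\tfrac{1}{72\tilde L^2\rho_{k+1}^2\eta_k}\|g_{k+1}-\nabla Q_{\rho_{k+1}}(x_{k+1})\|^2$, the same observation that with deterministic constraints the STORM variance driver collapses to $O(\sigma_f^2)$ (the paper even uses full constraint gradients, so the compositional variance vanishes entirely), the same $b_k\asymp k^{-3/4}$ bootstrap through the regularity condition~\eqref{eq: asp_gen4} to control $\sum_k|\rho_k-\rho_{k+1}|\,\|c(x_{k+1})\|^2$, and the same telescoping/averaging with $\eta_K=\Theta(K^{-1/2})$ to reach $\widetilde O(\varepsilon^{-4})$. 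You also correctly identify the binding constraint that the $\Theta(\rho_k^2)$ variance growth is what forbids these faster schedules in the stochastic-constraint case.
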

We note that a similar result with a single-loop algorithm is obtained in~\cite{shi2022momentum} with parameters depending on the last iteration (or equivalently, on the final accuracy). This results requires a pre-processing step to get an almost feasible point to get the complexity ${O}(\varepsilon^{-4})$, which deteriorates to ${O}(\varepsilon^{-5})$ otherwise. Hence, obtaining the more favorable complexity leads to a two-stage approach and also needing to set the final accuracy. Our approach leads to an algorithm that is both single stage and any-time.

\section*{Acknowledgments}
This research was supported in part by the NSF grant 2023239, the NSF grant 2224213, the AFOSR award FA9550-21-1-0084. 
\bibliographystyle{abbrvnat}
\bibliography{single_loop.bib}

\begin{thebibliography}{51}
\providecommand{\natexlab}[1]{#1}
\providecommand{\url}[1]{\texttt{#1}}
\expandafter\ifx\csname urlstyle\endcsname\relax
  \providecommand{\doi}[1]{doi: #1}\else
  \providecommand{\doi}{doi: \begingroup \urlstyle{rm}\Url}\fi

\bibitem[Arjevani et~al.(2022)Arjevani, Carmon, Duchi, Foster, Srebro, and
  Woodworth]{arjevani2022lower}
Y.~Arjevani, Y.~Carmon, J.~C. Duchi, D.~J. Foster, N.~Srebro, and B.~Woodworth.
\newblock Lower bounds for non-convex stochastic optimization.
\newblock \emph{Mathematical Programming}, pages 1--50, 2022.

\bibitem[Bertsekas(2014)]{bertsekas2014constrained}
D.~P. Bertsekas.
\newblock \emph{Constrained optimization and Lagrange multiplier methods}.
\newblock Academic press, 2014.

\bibitem[Bolte et~al.(2018)Bolte, Sabach, and Teboulle]{bolte2018nonconvex}
J.~Bolte, S.~Sabach, and M.~Teboulle.
\newblock Nonconvex lagrangian-based optimization: monitoring schemes and
  global convergence.
\newblock \emph{Mathematics of Operations Research}, 43\penalty0 (4):\penalty0
  1210--1232, 2018.

\bibitem[Boob et~al.(2022)Boob, Deng, and Lan]{boob2022stochastic}
D.~Boob, Q.~Deng, and G.~Lan.
\newblock Stochastic first-order methods for convex and nonconvex functional
  constrained optimization.
\newblock \emph{Mathematical Programming}, pages 1--65, 2022.

\bibitem[Bo{\c{t}} and Nguyen(2020)]{boct2020proximal}
R.~I. Bo{\c{t}} and D.-K. Nguyen.
\newblock The proximal alternating direction method of multipliers in the
  nonconvex setting: convergence analysis and rates.
\newblock \emph{Mathematics of Operations Research}, 45\penalty0 (2):\penalty0
  682--712, 2020.

\bibitem[Boyd et~al.(2011)Boyd, Parikh, Chu, Peleato, Eckstein,
  et~al.]{boyd2011distributed}
S.~Boyd, N.~Parikh, E.~Chu, B.~Peleato, J.~Eckstein, et~al.
\newblock Distributed optimization and statistical learning via the alternating
  direction method of multipliers.
\newblock \emph{Foundations and Trends{\textregistered} in Machine learning},
  3\penalty0 (1):\penalty0 1--122, 2011.

\bibitem[Curtis et~al.(2021)Curtis, O'Neill, and Robinson]{curtis2021worst}
F.~E. Curtis, M.~J. O'Neill, and D.~P. Robinson.
\newblock Worst-case complexity of an sqp method for nonlinear equality
  constrained stochastic optimization.
\newblock \emph{arXiv preprint arXiv:2112.14799}, 2021.

\bibitem[Cutkosky and Orabona(2019)]{cutkosky2019momentum}
A.~Cutkosky and F.~Orabona.
\newblock Momentum-based variance reduction in non-convex sgd.
\newblock \emph{Advances in neural information processing systems}, 32, 2019.

\bibitem[Davis and Drusvyatskiy(2019)]{davis2019stochastic}
D.~Davis and D.~Drusvyatskiy.
\newblock Stochastic model-based minimization of weakly convex functions.
\newblock \emph{SIAM Journal on Optimization}, 29\penalty0 (1):\penalty0
  207--239, 2019.

\bibitem[Dener et~al.(2020)Dener, Miller, Churchill, Munson, and
  Chang]{dener2020training}
A.~Dener, M.~A. Miller, R.~M. Churchill, T.~Munson, and C.-S. Chang.
\newblock Training neural networks under physical constraints using a
  stochastic augmented lagrangian approach.
\newblock \emph{arXiv preprint arXiv:2009.07330}, 2020.

\bibitem[Faw et~al.(2022)Faw, Tziotis, Caramanis, Mokhtari, Shakkottai, and
  Ward]{faw2022power}
M.~Faw, I.~Tziotis, C.~Caramanis, A.~Mokhtari, S.~Shakkottai, and R.~Ward.
\newblock The power of adaptivity in sgd: Self-tuning step sizes with unbounded
  gradients and affine variance.
\newblock In \emph{Conference on Learning Theory}, pages 313--355. PMLR, 2022.

\bibitem[Fiacco and McCormick(1968)]{fiacco1968nonlinear}
A.~V. Fiacco and G.~P. McCormick.
\newblock \emph{Nonlinear programming: sequential unconstrained minimization
  techniques}.
\newblock John Wiley, 1968.

\bibitem[Ghadimi and Lan(2013)]{ghadimi2013stochastic}
S.~Ghadimi and G.~Lan.
\newblock Stochastic first-and zeroth-order methods for nonconvex stochastic
  programming.
\newblock \emph{SIAM Journal on Optimization}, 23\penalty0 (4):\penalty0
  2341--2368, 2013.

\bibitem[Hestenes(1969)]{hestenes1969multiplier}
M.~R. Hestenes.
\newblock Multiplier and gradient methods.
\newblock \emph{Journal of optimization theory and applications}, 4\penalty0
  (5):\penalty0 303--320, 1969.

\bibitem[Hong(2016)]{hong2016decomposing}
M.~Hong.
\newblock Decomposing linearly constrained nonconvex problems by a proximal
  primal dual approach: Algorithms, convergence, and applications.
\newblock \emph{arXiv preprint arXiv:1604.00543}, 2016.

\bibitem[Hong et~al.(2018)Hong, Razaviyayn, and Lee]{hong2018gradient}
M.~Hong, M.~Razaviyayn, and J.~Lee.
\newblock Gradient primal-dual algorithm converges to second-order stationary
  solution for nonconvex distributed optimization over networks.
\newblock In \emph{International Conference on Machine Learning}, pages
  2009--2018. PMLR, 2018.

\bibitem[Huang et~al.(2019)Huang, Chen, and Huang]{huang2019faster}
F.~Huang, S.~Chen, and H.~Huang.
\newblock Faster stochastic alternating direction method of multipliers for
  nonconvex optimization.
\newblock In \emph{International conference on machine learning}, pages
  2839--2848. PMLR, 2019.

\bibitem[Huang and Lin(2023)]{huang2023single}
Y.~Huang and Q.~Lin.
\newblock Single-loop switching subgradient methods for non-smooth weakly
  convex optimization with non-smooth convex constraints.
\newblock \emph{arXiv preprint arXiv:2301.13314}, 2023.

\bibitem[Ji et~al.(2022)Ji, Liu, Liang, and Ying]{ji2022will}
K.~Ji, M.~Liu, Y.~Liang, and L.~Ying.
\newblock Will bilevel optimizers benefit from loops.
\newblock \emph{Advances in Neural Information Processing Systems},
  35:\penalty0 3011--3023, 2022.

\bibitem[Katz-Samuels et~al.(2022)Katz-Samuels, Nakhleh, Nowak, and
  Li]{katz2022training}
J.~Katz-Samuels, J.~B. Nakhleh, R.~Nowak, and Y.~Li.
\newblock Training ood detectors in their natural habitats.
\newblock In \emph{International Conference on Machine Learning}, pages
  10848--10865. PMLR, 2022.

\bibitem[Kong et~al.(2023)Kong, Melo, and Monteiro]{kong2023iteration}
W.~Kong, J.~G. Melo, and R.~D. Monteiro.
\newblock Iteration complexity of a proximal augmented lagrangian method for
  solving nonconvex composite optimization problems with nonlinear convex
  constraints.
\newblock \emph{Mathematics of Operations Research}, 48\penalty0 (2):\penalty0
  1066--1094, 2023.

\bibitem[Kupfer and Sachs(1992)]{kupfer1992numerical}
F.~Kupfer and E.~W. Sachs.
\newblock Numerical solution of a nonlinear parabolic control problem by a
  reduced sqp method.
\newblock \emph{Computational Optimization and Applications}, 1\penalty0
  (1):\penalty0 113--135, 1992.

\bibitem[Lan(2020)]{lan2020first}
G.~Lan.
\newblock \emph{First-order and stochastic optimization methods for machine
  learning}.
\newblock Springer, 2020.

\bibitem[Lan and Monteiro(2013)]{lan2013iteration}
G.~Lan and R.~D. Monteiro.
\newblock Iteration-complexity of first-order penalty methods for convex
  programming.
\newblock \emph{Mathematical Programming}, 138\penalty0 (1-2):\penalty0
  115--139, 2013.

\bibitem[Lan and Monteiro(2016)]{lan2016iteration}
G.~Lan and R.~D. Monteiro.
\newblock Iteration-complexity of first-order augmented lagrangian methods for
  convex programming.
\newblock \emph{Mathematical Programming}, 155\penalty0 (1-2):\penalty0
  511--547, 2016.

\bibitem[Li et~al.(2021)Li, Chen, Liu, Lu, and Xu]{li2021rate}
Z.~Li, P.-Y. Chen, S.~Liu, S.~Lu, and Y.~Xu.
\newblock Rate-improved inexact augmented lagrangian method for constrained
  nonconvex optimization.
\newblock In \emph{International Conference on Artificial Intelligence and
  Statistics}, pages 2170--2178. PMLR, 2021.

\bibitem[Li et~al.(2023)Li, Chen, Liu, Lu, and Xu]{li2023stochastic}
Z.~Li, P.-Y. Chen, S.~Liu, S.~Lu, and Y.~Xu.
\newblock Stochastic inexact augmented lagrangian method for nonconvex
  expectation constrained optimization.
\newblock \emph{Computational Optimization and Applications}, pages 1--31,
  2023.

\bibitem[Lin et~al.(2022{\natexlab{a}})Lin, Ma, and Xu]{lin2022complexity}
Q.~Lin, R.~Ma, and Y.~Xu.
\newblock Complexity of an inexact proximal-point penalty method for
  constrained smooth non-convex optimization.
\newblock \emph{Computational Optimization and Applications}, 82\penalty0
  (1):\penalty0 175--224, 2022{\natexlab{a}}.

\bibitem[Lin et~al.(2022{\natexlab{b}})Lin, Li, and Fang]{lin2022stochastic}
Z.~Lin, H.~Li, and C.~Fang.
\newblock Stochastic admm.
\newblock In \emph{Alternating Direction Method of Multipliers for Machine
  Learning}, pages 143--205. Springer, 2022{\natexlab{b}}.

\bibitem[Liu et~al.(2020)Liu, Liu, Song, and Peng]{liu2020chance}
X.~Liu, Q.~Liu, S.~Song, and J.~Peng.
\newblock A chance-constrained generative framework for sequence optimization.
\newblock In \emph{International Conference on Machine Learning}, pages
  6271--6281. PMLR, 2020.

\bibitem[Lu(2022)]{lu2022single}
S.~Lu.
\newblock A single-loop gradient descent and perturbed ascent algorithm for
  nonconvex functional constrained optimization.
\newblock In \emph{International Conference on Machine Learning}, pages
  14315--14357. PMLR, 2022.

\bibitem[Ma et~al.(2020)Ma, Lin, and Yang]{ma2020quadratically}
R.~Ma, Q.~Lin, and T.~Yang.
\newblock Quadratically regularized subgradient methods for weakly convex
  optimization with weakly convex constraints.
\newblock In \emph{International Conference on Machine Learning}, pages
  6554--6564. PMLR, 2020.

\bibitem[Mokhtari et~al.(2020)Mokhtari, Hassani, and
  Karbasi]{mokhtari2020stochastic}
A.~Mokhtari, H.~Hassani, and A.~Karbasi.
\newblock Stochastic conditional gradient methods: From convex minimization to
  submodular maximization.
\newblock \emph{Journal of machine learning research}, 2020.

\bibitem[Nocedal and Wright(2006)]{numopt2}
J.~Nocedal and S.~J. Wright.
\newblock \emph{Numerical optimization}.
\newblock Springer, 2nd edition, 2006.

\bibitem[Ouyang et~al.(2015)Ouyang, Chen, Lan, and
  Pasiliao~Jr]{ouyang2015accelerated}
Y.~Ouyang, Y.~Chen, G.~Lan, and E.~Pasiliao~Jr.
\newblock An accelerated linearized alternating direction method of
  multipliers.
\newblock \emph{SIAM Journal on Imaging Sciences}, 8\penalty0 (1):\penalty0
  644--681, 2015.

\bibitem[Powell(1969)]{powell1969method}
M.~J. Powell.
\newblock A method for nonlinear constraints in minimization problems.
\newblock \emph{Optimization}, pages 283--298, 1969.

\bibitem[Rees et~al.(2010)Rees, Dollar, and Wathen]{rees2010optimal}
T.~Rees, H.~S. Dollar, and A.~J. Wathen.
\newblock Optimal solvers for pde-constrained optimization.
\newblock \emph{SIAM Journal on Scientific Computing}, 32\penalty0
  (1):\penalty0 271--298, 2010.

\bibitem[Ruszczy{\'n}ski(1987)]{ruszczynski1987linearization}
A.~Ruszczy{\'n}ski.
\newblock A linearization method for nonsmooth stochastic programming problems.
\newblock \emph{Mathematics of Operations Research}, 12\penalty0 (1):\penalty0
  32--49, 1987.

\bibitem[Sahin et~al.(2019)Sahin, Eftekhari, Alacaoglu, Latorre, and
  Cevher]{sahin2019inexact}
M.~F. Sahin, A.~Eftekhari, A.~Alacaoglu, F.~Latorre, and V.~Cevher.
\newblock An inexact augmented lagrangian framework for nonconvex optimization
  with nonlinear constraints.
\newblock \emph{Advances in Neural Information Processing Systems}, 32, 2019.

\bibitem[Shi et~al.(2022)Shi, Wang, and Wang]{shi2022momentum}
Q.~Shi, X.~Wang, and H.~Wang.
\newblock A momentum-based linearized augmented lagrangian method for nonconvex
  constrained stochastic optimization.
\newblock \emph{Optimization Online}, 2022.

\bibitem[Xie and Wright(2021)]{xie2021complexity}
Y.~Xie and S.~J. Wright.
\newblock Complexity of proximal augmented lagrangian for nonconvex
  optimization with nonlinear equality constraints.
\newblock \emph{Journal of Scientific Computing}, 86\penalty0 (3):\penalty0
  1--30, 2021.

\bibitem[Xu(2017)]{xu2017accelerated}
Y.~Xu.
\newblock Accelerated first-order primal-dual proximal methods for linearly
  constrained composite convex programming.
\newblock \emph{SIAM Journal on Optimization}, 27\penalty0 (3):\penalty0
  1459--1484, 2017.

\bibitem[Xu(2021)]{xu2021iteration}
Y.~Xu.
\newblock Iteration complexity of inexact augmented lagrangian methods for
  constrained convex programming.
\newblock \emph{Mathematical Programming}, 185:\penalty0 199--244, 2021.

\bibitem[Yang et~al.(2023)Yang, Li, Fatkhullin, and He]{yang2023two}
J.~Yang, X.~Li, I.~Fatkhullin, and N.~He.
\newblock Two sides of one coin: the limits of untuned sgd and the power of
  adaptive methods.
\newblock \emph{arXiv preprint arXiv:2305.12475}, 2023.

\bibitem[Yang et~al.(2016)Yang, Scutari, Palomar, and
  Pesavento]{yang2016parallel}
Y.~Yang, G.~Scutari, D.~P. Palomar, and M.~Pesavento.
\newblock A parallel decomposition method for nonconvex stochastic multi-agent
  optimization problems.
\newblock \emph{IEEE Transactions on Signal Processing}, 64\penalty0
  (11):\penalty0 2949--2964, 2016.

\bibitem[Zafar et~al.(2019)Zafar, Valera, Gomez-Rodriguez, and
  Gummadi]{zafar2019fairness}
M.~B. Zafar, I.~Valera, M.~Gomez-Rodriguez, and K.~P. Gummadi.
\newblock Fairness constraints: A flexible approach for fair classification.
\newblock \emph{The Journal of Machine Learning Research}, 20\penalty0
  (1):\penalty0 2737--2778, 2019.

\bibitem[Zhang and Luo(2020)]{zhang2020proximal}
J.~Zhang and Z.-Q. Luo.
\newblock A proximal alternating direction method of multiplier for linearly
  constrained nonconvex minimization.
\newblock \emph{SIAM Journal on Optimization}, 30\penalty0 (3):\penalty0
  2272--2302, 2020.

\bibitem[Zhang and Luo(2022)]{zhang2022global}
J.~Zhang and Z.-Q. Luo.
\newblock A global dual error bound and its application to the analysis of
  linearly constrained nonconvex optimization.
\newblock \emph{SIAM Journal on Optimization}, 32\penalty0 (3):\penalty0
  2319--2346, 2022.

\bibitem[Zhang et~al.(2020)Zhang, Xiao, Sun, and Luo]{zhang2020single}
J.~Zhang, P.~Xiao, R.~Sun, and Z.~Luo.
\newblock A single-loop smoothed gradient descent-ascent algorithm for
  nonconvex-concave min-max problems.
\newblock \emph{Advances in neural information processing systems},
  33:\penalty0 7377--7389, 2020.

\bibitem[Zhang et~al.(2022)Zhang, Pu, and Luo]{zhang2022iteration}
J.~Zhang, W.~Pu, and Z.-Q. Luo.
\newblock On the iteration complexity of smoothed proximal alm for nonconvex
  optimization problem with convex constraints.
\newblock \emph{arXiv preprint arXiv:2207.06304}, 2022.

\bibitem[Zhang et~al.(2021)Zhang, Liu, Zhu, and Bentley]{zhang2021gt}
X.~Zhang, J.~Liu, Z.~Zhu, and E.~S. Bentley.
\newblock Gt-storm: Taming sample, communication, and memory complexities in
  decentralized non-convex learning.
\newblock In \emph{Proceedings of the Twenty-second International Symposium on
  Theory, Algorithmic Foundations, and Protocol Design for Mobile Networks and
  Mobile Computing}, pages 271--280, 2021.

\end{thebibliography}

\appendix

\section{Preliminaries}
\paragraph{Remarks regarding the stochastic oracle model in~\eqref{eq: oracle}.} One possible way to obtain $\tilde \nabla f$ as characterized in~\eqref{eq: oracle} is to compute $\nabla_x \tilde f(x, \xi)$ where $\tilde f$ is defined in~\eqref{eq:gen}.
For this quantity to satisfy the requirement in \eqref{eq: oracle}, its expectation over $\xi$ must be the full gradient.
However, this would require the 
gradient and expectation operations to be interchangeable, which is not always true, especially in our nonconvex setting.
It does however hold for functions in which the support of $\xi$ is finite (that is, finite-sum functions).
It also holds  under fairly general conditions for smooth $f$.
We make similar assumptions and apply similar conventions for the constraint functions $c$ and $\tilde{c}$ and use the oracle model described in this paragraph. It is worth noting that this oracle access is standard in stochastic optimization with nonconvexity, see e.g.,~\cite{ghadimi2013stochastic,arjevani2022lower,cutkosky2019momentum,lan2020first}.

\paragraph{A preliminary lemma.} The variance reduction technique introduced in~\cite{cutkosky2019momentum} uses a vector $g_k$ at each $k$ as a proxy for a gradient  of an expectation function. The variable
$g_k$ accumulates information from earlier iterations and from many values of the random variables, thus has lower variance than the gradient evaluated at $x_k$ and a single value of the random variable.

We need a lemma to bound the difference between $g_k$ and the corresponding gradient, as it evolves across iterations.
Here we state this lemma in a general form that can be applied in all the problem formulations considered in this paper.
The lemma includes possibly iteration \-dependent functions to accommodate situations in which $g_k$ represents the gradient of the augmented Lagrangian or quadratic penalty function with an iteration-dependent penalty parameter.
This lemma builds on~\cite[Lemma 5, Theorem 2]{cutkosky2019momentum}; see also~\cite{ruszczynski1987linearization,mokhtari2020stochastic,yang2016parallel} for conceptually similar derivations in other settings.

\begin{lemma}\label{lem: storm_estimator_genform}
Let $G_k\colon \mathbb{R}^n \to \mathbb{R}^n$ and $\tilde G_{k}(x, \xi)$ be such that $\mathbb{E}_\xi[\tilde G_k(x_k, \xi)] = G_k(x_k)$, $\mathbb{E}_\xi[\tilde G_{k+1}(x_{k+1}, \xi)] = G_{k+1}(x_{k+1})$. Define $g_{k+1} = \tilde G_{k+1}(x_{k+1}, \xi_{k+1}) + (1-\alpha_{k+1})(g_k - \tilde G_{k}(x_k, \xi_{k+1}))$ for $k\geq0$ and $g_0 \in \mathbb{R}^n$. We then have
\begin{align*}
   \mathbb{E}_k \|g_{k+1} - G_{k+1}(x_{k+1})\|^2 &\leq (1-\alpha_{k+1})^2\|g_k - G_{k}(x_{k})\|^2 \notag\\
    &\quad+ 7\mathbb{E}_k\| \tilde G_{k+1}(x_{k+1}, \xi_{k+1}) - \tilde G_{k+1}(x_k, \xi_{k+1}) \|^2 \notag\\
    &\quad+ 42 \mathbb{E}_k\| \tilde G_{k+1}(x_k, \xi_{k+1}) - \tilde G_{k}(x_k, \xi_{k+1})\|^2 \notag\\
    &\quad+3\alpha_{k+1}^2\mathbb{E}_k \| G_{k}(x_k) - \tilde G_{k}(x_k, \xi_{k+1}) \|^2,
\end{align*}
where $\mathbb{E}_k$ is the expectation conditioned on all the history up to and including $x_{k+1}, \xi_k$.
\end{lemma}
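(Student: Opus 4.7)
The plan is to split the one-step error into a contractive part and a conditionally mean-zero residual, let the inner-product term vanish, and bound the residual by a two-step Young's inequality whose parameters are tuned to produce the asymmetric constants $7$, $42$, $3$ in the claim.

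First I would rewrite the recursion in the form
\[
g_{k+1} - G_{k+1}(x_{k+1}) = (1-\alpha_{k+1})\bigl(g_k - G_k(x_k)\bigr) + W_{k+1},
\]
where $W_{k+1} := [\tilde G_{k+1}(x_{k+1},\xi_{k+1}) - G_{k+1}(x_{k+1})] - (1-\alpha_{k+1})[\tilde G_k(x_k,\xi_{k+1}) - G_k(x_k)]$. Since $\xi_{k+1}$ is drawn fresh while $x_k$ and $x_{k+1}$ are $\mathcal{F}_k$-measurable, the two unbiasedness hypotheses yield $\mathbb{E}_k W_{k+1} = 0$. Taking the conditional expectation of the squared norm, the cross term vanishes because $g_k - G_k(x_k)$ is $\mathcal{F}_k$-measurable, so
\[
\mathbb{E}_k \|g_{k+1} - G_{k+1}(x_{k+1})\|^2 = (1-\alpha_{k+1})^2 \|g_k - G_k(x_k)\|^2 + \mathbb{E}_k\|W_{k+1}\|^2,
\]
which already recovers the first term of the claim.

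Next I would expose the three natural pieces inside $W_{k+1}$. Setting $X := \tilde G_{k+1}(x_{k+1},\xi_{k+1}) - (1-\alpha_{k+1})\tilde G_k(x_k,\xi_{k+1})$, we have $W_{k+1} = X - \mathbb{E}_k X$, so by the variance-as-minimal-second-moment identity, $\mathbb{E}_k\|W_{k+1}\|^2 \leq \mathbb{E}_k\|X - c\|^2$ for any $\mathcal{F}_k$-measurable $c$. Choosing $c = \alpha_{k+1} G_k(x_k)$ produces the clean decomposition
\[
X - c = A + B + \alpha_{k+1} D,
\]
where $A := \tilde G_{k+1}(x_{k+1},\xi_{k+1}) - \tilde G_{k+1}(x_k,\xi_{k+1})$, $B := \tilde G_{k+1}(x_k,\xi_{k+1}) - \tilde G_k(x_k,\xi_{k+1})$, and $D := \tilde G_k(x_k,\xi_{k+1}) - G_k(x_k)$, which are precisely the three quantities appearing on the right-hand side of the lemma.

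The remaining step is to bound $\|A + B + \alpha_{k+1} D\|^2$ with the correct coefficients by applying Young's inequality twice. First I would use $\|A + (B + \alpha_{k+1} D)\|^2 \leq (1+1/a)\|A\|^2 + (1+a)\|B + \alpha_{k+1} D\|^2$ with $a = 1/6$, pinning the coefficient of $\|A\|^2$ to $1+1/a = 7$. Then I apply Young again inside, $\|B + \alpha_{k+1} D\|^2 \leq (1+b)\|B\|^2 + (1+1/b)\alpha_{k+1}^2\|D\|^2$ with $b = 35$, so that the cascaded coefficient of $\|B\|^2$ is $(7/6)(1+b) = 42$, and the cascaded coefficient of $\alpha_{k+1}^2\|D\|^2$ is $(7/6)(1+1/35) = (7/6)(36/35) = 6/5 \leq 3$. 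Taking $\mathbb{E}_k$ and combining with the decomposition of Step~1 gives exactly the claimed inequality.

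The main obstacle is the asymmetry of the constants: the lazy bound $\|A+B+\alpha D\|^2 \leq 3(\|A\|^2 + \|B\|^2 + \alpha^2\|D\|^2)$ is too symmetric to recover the $7/42$ ratio, so one is forced to peel off $A$ first and absorb the resulting inflation through a second, more aggressive Young's step on $B$. Noticing $42 = 6 \cdot 7$ and insisting that the residual coefficient on the $\alpha_{k+1}^2\|D\|^2$ term not exceed $3$ essentially forces the choices $a = 1/6$, $b = 35$; everything else is mechanical.
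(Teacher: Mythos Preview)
Your proof is correct. The overall architecture matches the paper's: write $g_{k+1}-G_{k+1}(x_{k+1})$ as the contractive piece $(1-\alpha_{k+1})(g_k-G_k(x_k))$ plus a conditionally mean-zero residual, kill the cross term, and then bound the residual in terms of $A$, $B$, and $\alpha_{k+1}D$.

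Where you differ is in how the residual is handled. The paper first does a symmetric three-term Young split of the residual into $\tilde G_{k+1}(x_{k+1})-\tilde G_k(x_k)$, $G_{k+1}(x_{k+1})-G_k(x_k)$, and $\alpha_{k+1}(G_k(x_k)-\tilde G_k(x_k))$ with coefficient $3$ each, then invokes Jensen to absorb the deterministic difference into the stochastic one (giving a factor $6$), and finally splits $\tilde G_{k+1}(x_{k+1})-\tilde G_k(x_k)=A+B$ via a parametrized Young to land on $7$ and $42$. You instead use the variance-as-minimal-second-moment trick to recenter at $c=\alpha_{k+1}G_k(x_k)$, which produces $A+B+\alpha_{k+1}D$ directly and replaces the Jensen step; two cascaded Young inequalities with $a=1/6$, $b=35$ then yield the same $7$, $42$ and an even sharper constant $6/5\le 3$ on the variance term. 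Your route is slightly slicker and gives a marginally tighter bound; the paper's route is more pedestrian but avoids needing to spot the recentering identity. Both are equally valid.
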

\begin{proof}
We have, by subtracting $G_{k+1}(x_{k+1})$ from both sides of the definition of $g_{k+1}$, that
\[
    g_{k+1} -G_{k+1}(x_{k+1}) = \tilde G_{k+1}(x_{k+1}, \xi_{k+1}) + (1-\alpha_{k+1})(g_k - \tilde G_{k}(x_{k}, \xi_{k+1})) - G_{k+1}(x_{k+1}).
\]
On this identity, we use the simple decomposition
\begin{equation*}
    (1-\alpha_{k+1})(g_k - \tilde G_k(x_k, \xi_{k+1})) = (1-\alpha_{k+1})(g_k - G_k(x_k)) + (1-\alpha_{k+1})(G_k(x_k)-\tilde G_k(x_k, \xi_{k+1})),
\end{equation*}
to obtain
\begin{align*}
    g_{k+1} - G_{k+1}(x_{k+1}) &= (1-\alpha_{k+1})(g_k - G_{k}(x_{k})) + (\tilde G_{k+1}(x_{k+1}, \xi_{k+1}) - G_{k+1}(x_{k+1})) \\
    &\quad+ (1-\alpha_{k+1})(G_{k}(x_k)- \tilde G_{k}(x_k, \xi_{k+1})).
\end{align*}
We next take the squared norm of both sides and expand the right-hand side
\begin{align}
    \|g_{k+1} - G_{k+1}(x_{k+1})\|^2 &= (1-\alpha_{k+1})^2\|g_k - G_{k}(x_{k})\|^2 \notag\\
    &\quad+2(1-\alpha_{k+1})\langle g_k - G_{k}(x_{k}), \tilde G_{k+1}(x_{k+1}, \xi_{k+1}) - G_{k+1}(x_{k+1}) \rangle\notag \\
    &\quad+2(1-\alpha_{k+1})^2 \langle g_k - G_{k}(x_k), G_{k}(x_k)- \tilde G_{k}(x_k, \xi_{k+1}) \rangle \notag \\
    &\quad+ \| \tilde G_{k+1}(x_{k+1}, \xi_{k+1}) - G_{k+1}(x_{k+1}) + (1-\alpha_{k+1})(G_{k}(x_k)- \tilde G_{k}(x_k, \xi_{k+1}))\|^2.\label{eq: jfh3}
\end{align}
We now take conditional expectation of this equality, where $\mathbb{E}_k$ is as defined in the lemma statement:
\begin{equation} \label{eq: sep3}
\begin{aligned}
    & \mathbb{E}_k \|g_{k+1} - G_{k+1}(x_{k+1})\|^2 = (1-\alpha_{k+1})^2   \|g_k - G_{k}(x_{k})\|^2  \\
    & \quad + \mathbb{E}_k\|\tilde G_{k+1}(x_{k+1}, \xi_{k+1}) - G_{k+1}(x_{k+1}) + (1-\alpha_{k+1})(G_{k}(x_k)- \tilde G_{k}(x_k, \xi_{k+1}))\|^2.
\end{aligned}
\end{equation}
This equality is because the inner product terms on \eqref{eq: jfh3} disappear after taking condition expectation: we have $g_k - G_{k}(x_k)$ is deterministic when we condition on $x_{k+1}$ and also $\mathbb{E}_k [\tilde G_{k+1}(x_{k+1}, \xi_{k+1}) - G_{k+1}(x_{k+1})] = 0$ and $\mathbb{E}_k [G_{k}(x_k)- G_{k}(x_k, \xi_{k+1}))] = 0$, by the requirements on $\tilde G_k$ and $\tilde G_{k+1}$ given in the lemma.

For the last term on the right-hand side of \eqref{eq: sep3}, we use Young's inequalities to obtain
\begin{align}
    &\mathbb{E}_k\| \tilde G_{k+1}(x_{k+1}, \xi_{k+1}) - G_{k+1}(x_{k+1}) + (1-\alpha_{k+1})(G_{k}(x_k)- \tilde G_{k}(x_k, \xi_{k+1})) \|^2 \notag \\
    &\leq 3 \mathbb{E}_k\| G_{k+1}(x_{k+1}) - G_{k}(x_k) \|^2 + 3 \mathbb{E}_k\| \tilde G_{k+1}(x_{k+1}, \xi_{k+1}) - \tilde G_{k}(x_k, \xi_{k+1}) \|^2 \notag\\
    &\quad+ 3\alpha_{k+1}^2 \mathbb{E}_k\| G_{k}(x_k) - \tilde G_{k}(x_k, \xi_{k+1}) \|^2 \notag\\
    &\leq 6  \mathbb{E}_k\| \tilde G_{k+1}(x_{k+1}, \xi_{k+1}) - \tilde G_{k}(x_k, \xi_{k+1}) \|^2 + 3\alpha_{k+1}^2 \mathbb{E}_k\| G_{k}(x_k) - \tilde G_{k}(x_k, \xi_{k+1}) \|^2,\label{eq: sep4}
\end{align}
where the final bound joins the first two terms in the previous bound, by Jensen's inequality, since 
\[
\mathbb{E}_k [\tilde G_{k+1}(x_{k+1}, \xi_{k+1}) - \tilde G_{k}(x_k, \xi_{k+1})] = G_{k+1}(x_{k+1}) - G_{k}(x_k).
\]
For the first term on the right-hand side of~\eqref{eq: sep4}, we add and subtract $\tilde G_{k+1}(x_k, \xi_{k+1})$ and use Young's inequality to find that
\begin{align*}
    & \| \tilde G_{k+1}(x_{k+1}, \xi_{k+1}) - \tilde G_{k}(x_k, \xi_{k+1}) \|^2 \\
    & \leq \frac{7}{6}\| \tilde G_{k+1}(x_{k+1}, \xi_{k+1}) - \tilde G_{k+1}(x_k, \xi_{k+1}) \|^2 
    + 7 \| \tilde G_{k+1}(x_k, \xi_{k+1}) - \tilde G_{k}(x_k, \xi_{k+1})\|^2.
\end{align*}
Thus \eqref{eq: sep4} becomes
\begin{align*}
    &\mathbb{E}_k\| \tilde G_{k+1}(x_{k+1}, \xi_{k+1}) - G_{k+1}(x_{k+1}) + (1-\alpha_{k+1})(G_{k}(x_k)- \tilde G_{k}(x_k, \xi_{k+1})) \|^2 \\ 
    &\quad\leq 7\mathbb{E}_k\| \tilde G_{k+1}(x_{k+1}, \xi_{k+1}) - \tilde G_{k+1}(x_k, \xi_{k+1}) \|^2 \\
    &\quad\quad+ 42 \mathbb{E}_k\| \tilde G_{k+1}(x_k, \xi_{k+1}) - \tilde G_{k}(x_k, \xi_{k+1})\|^2 \\
    &\quad\quad+3\alpha_{k+1}^2 \mathbb{E}_k\| G_{k}(x_k) - \tilde G_{k}(x_k, \xi_{k+1}) \|^2.
\end{align*}
By using this inequality to bound the last term on the right-hand side of~\eqref{eq: sep3}, we obtain the result.
\end{proof}

\section{Linear Constraints: Problem \eqref{eq:I} }
\subsection{One-step recursion on augmented Lagrangian}\label{sec: subsec_al_recursion}
We recall the augmented Lagrangian function for \eqref{eq:I} as
\begin{equation}\label{eq: sw3}
\mathcal{L}_\rho(x, \lambda) := f(x) + \langle \lambda, Ax-b\rangle + \frac{\rho}{2} \| Ax - b\|^2,
\end{equation}
for $\rho>0$. 
In this section, we prove the following result concerning the change in $\E \cL_\rho(x_k, \lambda_k)$ over one iteration. 
Note that the expectation $\E$ is with respect to the randomness of $\xi_k$ for $k=1,2,\dotsc$.
Our result makes use of the  Lipschitz constant of $\nabla_x \mathcal{L}_\rho(\cdot, \lambda_k)$, which is 
\begin{equation} \label{eq:Lrho}
    L_\rho := L_f + \rho \| A\|^2.
\end{equation}
In the rest of this section, we make frequent use of the matrix $Q_k$ defined by
\begin{equation} \label{eq:defQ}
    Q_k := \eta_k^{-1} I - \rho  A^\top A.
\end{equation}
\begin{lemma}\label{lem: gjr4}
Let the assumptions in~\eqref{eq: asp_case1} hold. 
Then for any $k\geq 1$, we have that
\begin{equation}\label{eq: sht4}
\begin{aligned}
    \mathbb{E}\Lcal_{\rho}(x_{k+1}, \lambda_{k+1}) & \leq \mathbb{E}\Lcal_{\rho}(x_{k}, \lambda_k) + \left(\frac{L_{\rho}}{2} -\frac{1}{2\eta_{k+1}} \right) \mathbb{E}\| x_{k+1} - x_k\|^2 + \frac{\eta_{k+1}}{2} \mathbb{E}\| g_k - \nabla f(x_k) \|^2 \\
    & \quad + \frac{1}{\rho} \mathbb{E}\| \lambda_{k+1} - \lambda_k \|^2.
    \end{aligned}
\end{equation}
With the definition of $Q_{k+1}$ as in \eqref{eq:defQ}, and assuming that $\eta_{k+1}$ is chosen  to ensure that $Q_{k+1} \succ 0$, we also have
\begin{align*}
    \mathbb{E}\|\lambda_{k+1} - \lambda_k\|^2 &\leq \frac{1}{\delta}\Bigg( 6 L_f^2 \mathbb{E}\| x_{k} - x_{k-1}\|^2 + 6\alpha_{k}^2 V^2 +6\alpha_{k}^2\mathbb{E}\| g_{k-1} - \nabla f(x_{k-1})\|^2 \\
   &\quad+ 3\mathbb{E}\left\| x_{k+1} - 2x_k + x_{k-1} \right\|^2_{Q_{k+1}^\top Q_{k+1}}  +3\left(\eta_k^{-1} - \eta_{k+1}^{-1}\right)^2\mathbb{E}\|x_k - x_{k-1}\|^2\Bigg).
\end{align*}
\end{lemma}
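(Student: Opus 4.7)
My plan is to decompose the one-step change $\cL_\rho(x_{k+1},\lambda_{k+1})-\cL_\rho(x_k,\lambda_k)$ into a primal move (with $\lambda_k$ fixed) and a dual move (with $x_{k+1}$ fixed). For the primal move, since $\nabla_x \cL_\rho(\cdot,\lambda_k)$ is $L_\rho$-Lipschitz (using that $\nabla f$ is $L_f$-Lipschitz by Jensen applied to \eqref{eq: asp_case1}), the descent lemma gives
\begin{equation*}
\cL_\rho(x_{k+1},\lambda_k)\le\cL_\rho(x_k,\lambda_k)+\langle\nabla f(x_k)+h_k,x_{k+1}-x_k\rangle+\tfrac{L_\rho}{2}\|x_{k+1}-x_k\|^2,
\end{equation*}
where $h_k:=A^\top\lambda_k+\rho A^\top(Ax_k-b)$. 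Substituting the primal update $x_{k+1}-x_k=-\eta_{k+1}(g_k+h_k)$, bounding $-\eta_{k+1}\langle\nabla f(x_k)-g_k,g_k+h_k\rangle$ by Young's inequality, and using $\eta_{k+1}\|g_k+h_k\|^2=\|x_{k+1}-x_k\|^2/\eta_{k+1}$ produces the $(L_\rho/2-1/(2\eta_{k+1}))\|x_{k+1}-x_k\|^2+(\eta_{k+1}/2)\|g_k-\nabla f(x_k)\|^2$ contribution. For the dual move, linearity of $\cL_\rho$ in $\lambda$ and the identity $\lambda_{k+1}-\lambda_k=\rho(Ax_{k+1}-b)$ give $\cL_\rho(x_{k+1},\lambda_{k+1})-\cL_\rho(x_{k+1},\lambda_k)=\|\lambda_{k+1}-\lambda_k\|^2/\rho$. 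Summing and taking expectations yields \eqref{eq: sht4}.

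For the second inequality, I would derive a closed-form expression for $A^\top(\lambda_{k+1}-\lambda_k)$ by subtracting the primal-optimality identities at iterations $k$ and $k-1$. Using $A^\top(\lambda_k-\lambda_{k-1})=\rho A^\top(Ax_k-b)$ to eliminate the $A^\top\lambda$ terms and absorbing the $\rho A^\top A$ and $\eta^{-1}I$ contributions into $Q_{k+1}=\eta_{k+1}^{-1}I-\rho A^\top A$, I expect the identity
\begin{equation*}
A^\top(\lambda_{k+1}-\lambda_k)=-Q_{k+1}(x_{k+1}-2x_k+x_{k-1})-(g_k-g_{k-1})+(\eta_k^{-1}-\eta_{k+1}^{-1})(x_k-x_{k-1}).
\end{equation*}
Because $\lambda_{k+1}-\lambda_k=\rho(Ax_{k+1}-b)\in\mathrm{Range}(A)$ (by feasibility of $b$), the range-restricted lower bound \eqref{eq:def.delta} yields $\delta\,\|\lambda_{k+1}-\lambda_k\|^2\le\|A^\top(\lambda_{k+1}-\lambda_k)\|^2$. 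A three-term Young inequality then splits the right-hand side into the $Q_{k+1}^\top Q_{k+1}$-weighted second-difference term (factor $3$), the $(\eta_k^{-1}-\eta_{k+1}^{-1})^2\|x_k-x_{k-1}\|^2$ term (factor $3$), and a $3\|g_k-g_{k-1}\|^2$ term.

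To control $\mathbb{E}\|g_k-g_{k-1}\|^2$, I would expand the STORM update as $g_k-g_{k-1}=(\tilde\nabla f(x_k,\xi_k)-\tilde\nabla f(x_{k-1},\xi_k))+\alpha_k(\tilde\nabla f(x_{k-1},\xi_k)-g_{k-1})$ and apply $\|a+b\|^2\le 2\|a\|^2+2\|b\|^2$; conditioning on the history up to $\xi_{k-1}$, unbiasedness of $\tilde\nabla f(x_{k-1},\cdot)$ makes the cross term $\langle\tilde\nabla f(x_{k-1},\xi_k)-\nabla f(x_{k-1}),\,\nabla f(x_{k-1})-g_{k-1}\rangle$ vanish, so that the variance and mean-square smoothness bounds in \eqref{eq: asp_case1} give $\mathbb{E}\|g_k-g_{k-1}\|^2\le 2L_f^2\mathbb{E}\|x_k-x_{k-1}\|^2+2\alpha_k^2V^2+2\alpha_k^2\mathbb{E}\|g_{k-1}-\nabla f(x_{k-1})\|^2$. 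Multiplying by $3/\delta$ produces the $6,6,6$ multipliers in the statement. The main obstacle I anticipate is the algebraic elimination that yields the clean $Q_{k+1}$-weighted second-order difference in the identity for $A^\top(\lambda_{k+1}-\lambda_k)$: the $A^\top\lambda$ terms only cancel after careful regrouping of the $\eta_{k+1}^{-1}$, $\rho A^\top A$, and dual-recursion contributions, and one must also verify the range-membership $\lambda_{k+1}-\lambda_k\in\mathrm{Range}(A)$ so that \eqref{eq:def.delta} applies; the rest reduces to routine Young-inequality bookkeeping.
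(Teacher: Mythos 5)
Your proposal is correct and follows essentially the same route as the paper: descent lemma plus the primal update and Young's inequality for \eqref{eq: sht4}, then subtracting consecutive primal-optimality identities to express $A^\top(\lambda_{k+1}-\lambda_k)$ in terms of $Q_{k+1}$-weighted second differences, the step-size drift, and $g_k-g_{k-1}$, followed by the range-of-$A$ eigenvalue bound, a three-term Young inequality, and the STORM decomposition with the cross term vanishing by the tower rule. (The sign on the $g_k-g_{k-1}$ term in your stated identity is flipped relative to the paper's, but this is immaterial since the term is squared.)
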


Before proving this result, we state and prove an immediate corollary.
\begin{corollary}\label{cor: gjr5}
   Under the same assumptions as Lemma~\ref{lem: gjr4}, for any positive constants $c_1$ and $c_4$, and with $\eta_k$ defined as in~\eqref{eq: params_sec1_app}  (where this definition depends on $c_1$, $c_5$, and other constants), we have 
\begin{equation*}
\begin{aligned}
    \mathbb{E}\Lcal_{\rho}(x_{k+1}, \lambda_{k+1}) & \leq \mathbb{E}\Lcal_{\rho}(x_{k}, \lambda_k) + \frac{\eta_{k+1}}{2} \mathbb{E}\| g_k - \nabla f(x_k) \|^2 + \frac{6(1+c_1)\alpha_{k}^2}{\rho\delta} \mathbb{E} \| g_{k-1} - \nabla f(x_{k-1})\|^2 \\
    &\quad  + \left(\frac{L_{\rho}}{2} -\frac{1}{2\eta_{k+1}} \right) \mathbb{E}\| x_{k+1} - x_k\|^2 +   \frac{(6+c_4)(1+c_1)L_f^2}{\rho\delta} \mathbb{E}\| x_k - x_{k-1}\|^2 \\
    & \quad + \frac{3(1+c_1)}{\rho\delta} \mathbb{E}\| x_{k+1} - 2x_k + x_{k-1}\|^2_{Q_{k+1}^\top Q_{k+1}} + \frac{6(1+c_1)\alpha_{k}^2 V^2}{\rho\delta} - \frac{c_1}{\rho}\mathbb{E}\| \lambda_{k+1} - \lambda_k\|^2.
    \end{aligned}
\end{equation*} 
\end{corollary}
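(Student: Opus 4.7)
The plan is to take the one-step augmented Lagrangian inequality \eqref{eq: sht4} from Lemma \ref{lem: gjr4} and substitute in the second bound of the same lemma to eliminate the $\tfrac{1}{\rho}\mathbb{E}\|\lambda_{k+1}-\lambda_k\|^2$ term, but to do so while reserving a strictly negative multiple of $\|\lambda_{k+1}-\lambda_k\|^2$ on the right-hand side. That reserved negative term is what later allows the potential function in Lemma \ref{lem: skl4} to dissipate dual variation, which is why the corollary is stated with $c_1 > 0$ free.

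The key algebraic move is the split $\tfrac{1}{\rho} = \tfrac{1+c_1}{\rho} - \tfrac{c_1}{\rho}$. I would leave $-\tfrac{c_1}{\rho}\mathbb{E}\|\lambda_{k+1}-\lambda_k\|^2$ intact on the right-hand side of \eqref{eq: sht4} and apply the second bound of Lemma \ref{lem: gjr4} only to the $\tfrac{1+c_1}{\rho}$ piece. After dividing through by $\delta$ and distributing, this produces five summands sharing the common factor $\tfrac{1+c_1}{\rho\delta}$: an $\alpha_k^2 V^2$ term, an $\alpha_k^2 \mathbb{E}\|g_{k-1}-\nabla f(x_{k-1})\|^2$ term, a $\mathbb{E}\|x_{k+1}-2x_k+x_{k-1}\|^2_{Q_{k+1}^\top Q_{k+1}}$ term, and two contributions proportional to $\mathbb{E}\|x_k-x_{k-1}\|^2$, one with coefficient $6L_f^2$ and one with coefficient $3(\eta_k^{-1}-\eta_{k+1}^{-1})^2$. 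The first four match the statement of the corollary verbatim; the remaining task is to fold both $\|x_k-x_{k-1}\|^2$ contributions into a single coefficient of the form $(6+c_4)L_f^2$.

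The only nonroutine step is therefore verifying the step-size inequality
\[
3(\eta_k^{-1}-\eta_{k+1}^{-1})^2 \leq c_4 L_f^2,
\]
which is a quantitative form of ``$\eta_k$ changes slowly.'' With $\eta_k = \eta/[(k+k_0)^{1/3}\log(k+k_0)]$ and $\eta \asymp 1/L_\rho$ from \eqref{eq: params_sec1}, a one-term Taylor expansion shows that $\eta_k^{-1} - \eta_{k+1}^{-1}$ is of order $L_\rho/[(k+k_0)^{2/3}\log(k+k_0)]$, which is uniformly dominated by a constant multiple of $L_f$ once $k_0 = \Omega(\mathrm{poly}(\delta^{-1}))$ is taken large enough. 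Pinning down $c_4$ in terms of these universal constants is the detailed bookkeeping deferred to \eqref{eq: params_sec1_app}; I expect this to be the main (mild) obstacle, while the rest of the argument is direct substitution and collection of like terms.
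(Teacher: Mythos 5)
Your proposal is correct and takes essentially the same route as the paper: the paper's proof likewise adds and subtracts $\frac{c_1}{\rho}\mathbb{E}\|\lambda_{k+1}-\lambda_k\|^2$ (your split $\frac{1}{\rho}=\frac{1+c_1}{\rho}-\frac{c_1}{\rho}$), applies the dual-increment bound of Lemma~\ref{lem: gjr4} only to the $\frac{1+c_1}{\rho}$ piece, and invokes Fact~\ref{fact: fact1} to absorb $3(\eta_k^{-1}-\eta_{k+1}^{-1})^2\leq c_4L_f^2$ into the $(6+c_4)(1+c_1)L_f^2/(\rho\delta)$ coefficient. The only (cosmetic) slip is that $\eta_{k+1}^{-1}-\eta_k^{-1}$ is of order $\log(k+k_0)/[\eta(k+k_0)^{2/3}]$, with the logarithm in the numerator rather than the denominator; this still vanishes as $k_0$ grows, so your conclusion is unaffected.
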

\begin{proof}[Proof of Corollary~\ref{cor: gjr5}]
The proof is immediate after adding and subtracting $\frac{c_1}{\rho} \mathbb{E}\|\lambda_{k+1} - \lambda_k\|^2$, using the upper bound of $\mathbb{E}\|\lambda_{k+1} - \lambda_k\|^2$ from Lemma~\ref{lem: gjr4} and Fact~\ref{fact: fact1} to bound $3(\eta_k^{-1}-\eta_{k+1}^{-1})^2\leq c_4L_f^2$.
\end{proof}

We now return to the proof of Lemma~\ref{lem: gjr4}.
\begin{proof}[Proof of Lemma~\ref{lem: gjr4}]
By Lipschitz continuity of $\nabla \Lcal_\rho(\cdot, \lambda_k)$, we have
\begin{align}\label{eq: ute4}
    \Lcal_{\rho}(x_{k+1}, \lambda_k) &\leq \Lcal_{\rho}(x_{k}, \lambda_k) + \langle \nabla_x \Lcal_{\rho}(x_{k}, \lambda_k), x_{k+1} - x_k \rangle + \frac{L_{\rho}}{2} \| x_{k+1} - x_k\|^2.
\end{align}
By the definitions of $\nabla_x \Lcal_\rho$ and $x_{k+1}$ in Algorithm \ref{algorithm:lalm_lincons}, we have that
\begin{align*}
\langle \nabla_x\Lcal_{\rho}(x_k, \lambda_k), x_{k+1} - x_k \rangle &= \langle g_k + A^\top \lambda_k + \rho A^\top(Ax_k- b), x_{k+1} - x_k \rangle + \langle \nabla f(x_k) - g_k, x_{k+1} - x_k \rangle \\
&\leq -\frac{1}{\eta_{k+1}} \| x_{k+1} - x_k\|^2 +\frac{\eta_{k+1}}{2}\|\nabla f(x_k) - g_k\|^2 + \frac{1}{2\eta_{k+1}} \| x_{k+1} - x_k\|^2 \\
&= -\frac{1}{2\eta_{k+1}} \| x_{k+1} - x_k\|^2 +\frac{\eta_{k+1}}{2}\|\nabla f(x_k) - g_k\|^2,
\end{align*}
where the last two terms on the second line are from Young's inequality.
By substituting in~\eqref{eq: ute4} and collecting like terms, we obtain
\begin{equation}\label{eq: wee3}
    \Lcal_{\rho}(x_{k+1}, \lambda_k) \leq \Lcal_{\rho}(x_{k}, \lambda_k) + \left(\frac{L_\rho}{2} -\frac{1}{2\eta_{k+1}} \right) \| x_{k+1} - x_k\|^2 + \frac{\eta_{k+1}}{2} \| g_k - \nabla f(x_k) \|^2.
\end{equation}
We also have by the definition of $\Lcal_\rho$ in~\eqref{eq: sw3} and $\lambda_{k+1}$ in Algorithm~\ref{algorithm:lalm_lincons} that
\begin{equation*}
    \Lcal_{\rho}(x_{k+1}, \lambda_{k+1}) - \Lcal_{\rho}(x_{k+1}, \lambda_{k}) = \langle \lambda_{k+1} - \lambda_k, Ax_{k+1} - b \rangle = \frac{1}{\rho}\|\lambda_{k+1} - \lambda_k\|^2.
\end{equation*}
By using this identity in~\eqref{eq: wee3}, we obtain the first result~\eqref{eq: sht4} after taking expectation. 

We now bound $\mathbb{E}\|\lambda_{k+1} - \lambda_k\|^2$ to get the second result. By using the definitions of $x_{k+1}$ and $\lambda_{k+1}$ in Algorithm~\ref{algorithm:lalm_lincons}, we obtain
\begin{align}
    x_{k+1} &= x_k - \eta_{k+1}(g_k + A^\top \lambda_k + \rho A^\top(Ax_k - b)) \notag \\
    &= x_k - \eta_{k+1}(g_k + A^\top \lambda_{k+1} + \rho A^\top A(x_k-x_{k+1}))\label{eq: yui4.5}\\
    \iff \left(\eta_{k+1}^{-1} I-\rho A^\top A\right)(x_{k+1} - x_k) & = -g_k - A^\top \lambda_{k+1}.\label{eq: yui5}
\end{align}
Replacing $k$ by $k-1$ in this identity, we have
\begin{align}
    \left(\eta_k^{-1} I -\rho A^\top A\right)(x_{k} - x_{k-1}) &= -g_{k-1} - A^\top \lambda_{k} \nonumber \\
    \iff \left(\eta_{k+1}^{-1} I -\rho A^\top A\right)(x_{k} - x_{k-1}) +\left(\eta_k^{-1} - \eta_{k+1}^{-1}\right) (x_k - x_{k-1}) &= -g_{k-1} - A^\top \lambda_{k}.\label{eq: yui6}
\end{align}
Subtracting~\eqref{eq: yui6} from~\eqref{eq: yui5} (to use a similar technique to one used  in~\cite{hong2016decomposing,hong2018gradient} with the change of identifying the error coming from iteration-dependent parameters) gives
\begin{align}
&\left(\eta_{k+1}^{-1} I  - \rho A^\top A \right) \left( x_{k+1} - x_k - (x_k - x_{k-1}) \right) - \left(\eta_k^{-1}-\eta_{k+1}^{-1}\right)(x_k - x_{k-1}) \notag \\
&= g_{k-1} - g_k + A^\top(\lambda_k- \lambda_{k+1}). \label{eq: wer2}
    \end{align}
    By rearranging, taking squared norm of each side, and using Young's inequality, we obtain
    \begin{multline}
 \| A^\top (\lambda_k - \lambda_{k+1}) \|^2 \leq 3\| g_k - g_{k-1}\|^2 + 3\left\| \left( \eta_{k+1}^{-1} I  - \rho A^\top A \right)(x_{k+1} - x_k - (x_k - x_{k-1})) \right\|^2 \\
    + 3\left(\eta_k^{-1} - \eta_{k+1}^{-1}\right)^2\|x_k - x_{k-1}\|^2. \label{eq: wer3}
\end{multline}
For the first term in this bound, by the definition of $g_{k+1}$ from Algorithm~\ref{algorithm:lalm_lincons}, we have that
\begin{equation*}
    g_{k+1} - g_{k} = \tilde \nabla f(x_{k+1}, \xi_{k+1}) - \tilde\nabla f(x_k, \xi_{k+1}) + \alpha_{k+1}(\tilde\nabla f(x_k, \xi_{k+1}) - g_k).
\end{equation*}
By taking squared norms, using Young's inequality and Lipschitzness of $\tilde \nabla f(\cdot, \xi)$ from \eqref{eq: asp_case1}, we obtain
\begin{align}
    \mathbb{E}\|g_{k+1} - g_{k}\|^2 &\leq 2 L_f^2 \mathbb{E}\| x_{k+1} - x_k\|^2 + 2\alpha_{k+1}^2 \mathbb{E}\| \tilde\nabla f(x_k,\xi_{k+1}) - g_k\|^2 \notag \\
    &= 2L_f^2\mathbb{E} \| x_{k+1} - x_k\|^2 + 2\alpha_{k+1}^2  \mathbb{E}\| \tilde\nabla f(x_k, \xi_{k+1}) - \nabla f(x_k) \|^2 + 2\alpha_{k+1}^2 \mathbb{E}\|g_k- \nabla f(x_k) \|^2 \notag \\
    &\leq 2L_f^2\mathbb{E} \| x_{k+1} - x_k\|^2 + 2\alpha_{k+1}^2 V^2 + 2\alpha_{k+1}^2 \mathbb{E}\|g_k- \nabla f(x_k) \|^2,
    \label{eq: hjy3}
\end{align}
where $V$ is the bound on variance from \eqref{eq: asp_case1}.
To obtain the equality in the derivation above, we used the tower rule: since $\mathbb{E}_{\xi_{k+1}} [\tilde\nabla f(x_k,\xi_{k+1})] = \nabla f(x_k)$ and $g_k, \nabla f(x_k)$ are deterministic when we are taking the conditional expectation, thus $\mathbb{E}\langle \tilde \nabla f(x_k, \xi_{k+1}) - \nabla f(x_k), g_k - \nabla f(x_k) \rangle = 0$.

Using the inequality \eqref{eq: hjy3} in~\eqref{eq: wer3} (with index $k-1$ instead of $k$), we have after taking expectation that 
\begin{align}
   \mathbb{E} \| A^\top (\lambda_k - \lambda_{k+1}) \|^2 &\leq 6 L_f ^2    \mathbb{E}\| x_{k} - x_{k-1}\|^2 + 6\alpha_{k}^2 V^2 + 6\alpha_{k}^2    \mathbb{E}\| g_{k-1} - \nabla f(x_{k-1})\|^2\notag  \\
   &\quad + 3\mathbb{E}\left\| \left( \eta_{k+1}^{-1} I - \rho A^\top A \right)(x_{k+1} - x_k - (x_k - x_{k-1})) \right\|^2\notag \\
   &\quad +3\left(\eta_k^{-1} - \eta_{k+1}^{-1}\right)^2\mathbb{E}\|x_k - x_{k-1}\|^2.\label{eq: hjy4}
   \end{align}
   Since $\lambda_{k+1}-\lambda_k$ is in the range of $A$ and $\delta$ is the smallest nonzero eigenvalue of $A^\top A$ (see also \eqref{eq:def.delta}), we have the result after using the definition  of $Q_{k+1}$ from \eqref{eq:defQ}.
\end{proof}

\subsection{One-step recursion on feasibility and iterate difference}
\begin{lemma}\label{lem: gjr5}
Let the assumptions in~\eqref{eq: asp_case1} hold. 
For all $k \ge 0$,  assume that $\eta_{k+1}$ is chosen  to ensure that $Q_{k+1} \succ 0$ (where $Q_{k+1}$ is defined in \eqref{eq:defQ}).
Then for any $k\geq 1$, we have that
\begin{align*}
& \frac{\rho}{2\eta_{k+2}} \mathbb{E}\| A x_{k+1} - b \|^2 + \frac{1}{2\eta_{k+1}} \mathbb{E}\|x_{k+1} - x_k\|_{Q_{k+1}}^2 \\
&\leq \frac{\rho}{2\eta_{k+1}} \mathbb{E}\| A x_{k} - b \|^2 + \frac{1}{2\eta_k} \mathbb{E}\|x_{k} - x_{k-1}\|^2_{Q_k}\\
&\quad - \frac{1}{2\eta_{k+1}} \mathbb{E}\| x_{k+1} - 2x_k+x_{k-1} \|^2_{Q_{k+1}} +\frac{\eta_{k+2}^{-1}-\eta_{k+1}^{-1}}{2\rho} \mathbb{E}\| \lambda_{k+1} - \lambda_k\|^2\\
&\quad + \left(\frac{L_f}{2\eta_{k+1}} +\frac{\eta_{k+1}^{-1} - \eta_k^{-1}}{2\eta_{k+1}}\right) \mathbb{E}\| x_{k+1} - x_k\|^2   + \left( \frac{L_f}{\eta_{k+1}} + \frac{\eta_{k+1}^{-1} - \eta_k^{-1}}{2\eta_{k+1}} + \frac{\eta_{k+1}^{-2} - \eta_k^{-2}}{2} \right) \mathbb{E}  \| x_k - x_{k-1}\|^2 \\
&\quad +\frac{\alpha_k^2}{L_f\eta_{k+1}}\mathbb{E}\| g_{k-1} - \nabla f(x_{k-1})\|^2  + \frac{\alpha_k^2 V^2}{L_f\eta_{k+1}}.
\end{align*}
\end{lemma}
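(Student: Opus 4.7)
The plan is to prove the inequality by producing the helpful ``descent'' term $-\frac{1}{2\eta_{k+1}}\|r\|_{Q_{k+1}}^2$ on the right-hand side naturally out of the left-hand side, where I set $p := x_{k+1}-x_k$, $q := x_k - x_{k-1}$, and $r := p - q = x_{k+1}-2x_k+x_{k-1}$. The starting point is the three-point identity $\|p\|_{Q_{k+1}}^2 = \|q\|_{Q_{k+1}}^2 - \|r\|_{Q_{k+1}}^2 + 2\langle Q_{k+1} p, r\rangle$, which is the $Q_{k+1}$-weighted version of $\|p\|^2 = \|q\|^2 - \|r\|^2 + 2\langle p, r\rangle$ (immediate from $p = q + r$). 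Substituting this for the $\frac{1}{2\eta_{k+1}}\|p\|_{Q_{k+1}}^2$ term on the LHS cancels the $-\frac{1}{2\eta_{k+1}}\|r\|_{Q_{k+1}}^2$ on the RHS and leaves only the cross-term $\frac{1}{\eta_{k+1}}\langle Q_{k+1} p, r\rangle$ and the weighted $Q$-norm difference $\frac{1}{2\eta_{k+1}}\|q\|_{Q_{k+1}}^2 - \frac{1}{2\eta_k}\|q\|_{Q_k}^2$ to control. For the feasibility part, the dual update $\lambda_{k+1}-\lambda_k = \rho(Ax_{k+1}-b)$ lets me write $\frac{\rho}{2\eta_{k+2}}\|Ax_{k+1}-b\|^2 = \frac{\rho}{2\eta_{k+1}}\|Ax_{k+1}-b\|^2 + \frac{\eta_{k+2}^{-1}-\eta_{k+1}^{-1}}{2\rho}\|\lambda_{k+1}-\lambda_k\|^2$, which isolates exactly the weight-change contribution on the RHS; the remaining difference $\frac{\rho}{2\eta_{k+1}}[\|Ax_{k+1}-b\|^2-\|Ax_k-b\|^2]$ then expands as $\frac{\rho}{2\eta_{k+1}}\|Ap\|^2 + \frac{1}{\eta_{k+1}}\langle A^\top(\lambda_k-\lambda_{k-1}), p\rangle$ by using $Ax_{k+1}-b = Ap + (Ax_k-b)$ together with $\rho(Ax_k-b) = \lambda_k-\lambda_{k-1}$, valid for $k \ge 1$.

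The heart of the argument is to rewrite $\frac{1}{\eta_{k+1}}\langle Q_{k+1}p, r\rangle$ using \eqref{eq: wer2}, which after rearrangement gives
\[
Q_{k+1} r = (\eta_k^{-1}-\eta_{k+1}^{-1})\, q + (g_{k-1}-g_k) - A^\top(\lambda_{k+1}-\lambda_k).
\]
I also need the identity $\langle A^\top(\lambda_{k+1}-\lambda_k), p\rangle = \rho\|Ap\|^2 + \langle A^\top(\lambda_k-\lambda_{k-1}), p\rangle$, which again follows from $Ax_{k+1}-b = Ap + (Ax_k-b)$ and both dual updates. After substituting and summing with the feasibility expansion, the two $\langle A^\top(\lambda_k-\lambda_{k-1}), p\rangle$ contributions cancel, and the $\|Ap\|^2$ contributions combine to a non-positive $-\frac{\rho}{2\eta_{k+1}}\|Ap\|^2$ that may simply be dropped. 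What survives are two scalar cross-terms, $\frac{\eta_k^{-1}-\eta_{k+1}^{-1}}{\eta_{k+1}}\langle p, q\rangle$ and $\frac{1}{\eta_{k+1}}\langle p, g_{k-1}-g_k\rangle$, together with the weighted $Q$-norm difference on $q$.

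To finish, I apply Young's inequality in the form $|\langle u, v\rangle| \leq \frac{\mu}{2}\|u\|^2 + \frac{1}{2\mu}\|v\|^2$ to each surviving cross-term: the symmetric choice $\mu = 1$ for the first yields $\frac{\eta_{k+1}^{-1}-\eta_k^{-1}}{2\eta_{k+1}}(\|p\|^2+\|q\|^2)$, and $\mu = L_f$ for the second yields $\frac{L_f}{2\eta_{k+1}}\|p\|^2 + \frac{1}{2L_f\eta_{k+1}}\|g_k-g_{k-1}\|^2$. The weighted $Q$-norm difference is bounded by $\frac{\eta_{k+1}^{-2}-\eta_k^{-2}}{2}\|q\|^2$ because $\eta_{k+1}^{-1}Q_{k+1}-\eta_k^{-1}Q_k = (\eta_{k+1}^{-2}-\eta_k^{-2})I - (\eta_{k+1}^{-1}-\eta_k^{-1})\rho A^\top A \preceq (\eta_{k+1}^{-2}-\eta_k^{-2})I$, where the $A^\top A$ coefficient is non-positive under the decreasing $\eta_k$ schedule. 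Finally, \eqref{eq: hjy3} shifted by one index bounds $\mathbb{E}\|g_k-g_{k-1}\|^2 \le 2L_f^2\mathbb{E}\|q\|^2 + 2\alpha_k^2 V^2 + 2\alpha_k^2 \mathbb{E}\|g_{k-1}-\nabla f(x_{k-1})\|^2$, which produces exactly the $\frac{L_f}{\eta_{k+1}}\|q\|^2$, $\frac{\alpha_k^2 V^2}{L_f\eta_{k+1}}$, and $\frac{\alpha_k^2}{L_f\eta_{k+1}}\mathbb{E}\|g_{k-1}-\nabla f(x_{k-1})\|^2$ contributions. Taking expectations and collecting coefficients on $\mathbb{E}\|p\|^2$ and $\mathbb{E}\|q\|^2$ recovers the stated inequality term by term.

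The main obstacle is the bookkeeping needed to have the dual cross-terms cancel cleanly. A different three-point identity (for instance, one producing $+\|r\|_{Q_{k+1}}^2$) would leave an unabsorbable positive $\|r\|_{Q_{k+1}}^2$; and using a shifted version of \eqref{eq: yui5} in place of \eqref{eq: wer2} to expand $Q_{k+1}r$ would introduce $x_{k-2}$ and break the structure. Once the correct identities are in hand, the rest reduces to Young's inequalities, variance reduction via \eqref{eq: hjy3}, and monotonicity of $\{\eta_k\}$.
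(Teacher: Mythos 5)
Your proposal is correct and follows essentially the same route as the paper: both hinge on pairing the recursion \eqref{eq: wer2} with $x_{k+1}-x_k$, the same polarization identities for the $\|\cdot\|_{Q_{k+1}}$ and $\|A(\cdot)\|$ terms (with the nonpositive $\|A(x_{k+1}-x_k)\|^2$ contribution dropped), the same Young's inequality weights ($\mu=1$ and $\mu=L_f$), the bound $\eta_{k+1}^{-1}Q_{k+1}-\eta_k^{-1}Q_k\preceq(\eta_{k+1}^{-2}-\eta_k^{-2})I$, and the shifted variance estimate \eqref{eq: hjy3}. The only difference is presentational — you substitute the three-point identity into the left-hand side and cancel the dual cross-terms explicitly via $\lambda_{k+1}-\lambda_k=\rho A p+(\lambda_k-\lambda_{k-1})$, whereas the paper bounds the two sides of the identity \eqref{eq: htu5} separately — and the resulting coefficients match the statement term by term.
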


Before proving this result, we state and prove an immediate corollary.
\begin{corollary}\label{cor: wr4}
    Suppose the assumptions of Lemma~\ref{lem: gjr5} hold and that $c_1$, $c_2$, $c_3$ are arbitrary positive constants, with $\eta_k$ and $m>0$ defined from
    \eqref{eq: params_sec1_app} (where the definitions of $\eta_k$ and $m$ depend on $c_1$, $c_2$, $c_3$, and other constants). We have 
    \begin{align*}
    & \frac{\rho}{2\eta_{k+2}} \mathbb{E}\| A x_{k+1} - b \|^2 + \frac{1}{2\eta_{k+1}} \mathbb{E}\|x_{k+1} - x_k\|_{Q_{k+1}}^2 \\
    &\leq \frac{\rho}{2\eta_{k+1}} \mathbb{E}\| A x_{k} - b \|^2 + \frac{1}{2\eta_k} \mathbb{E}\|x_{k} - x_{k-1}\|^2_{Q_k}  \\
    &\quad - \frac{1}{2\eta_{k+1}} \mathbb{E}\| x_{k+1} - 2x_k+x_{k-1} \|^2_{Q_{k+1}} +\frac{c_1}{\rho m} \mathbb{E}\| \lambda_{k+1} - \lambda_k\|^2\\
    &\quad + \frac{(1+2c_3)L_f}{2\eta_{k+1}}  \mathbb{E}\| x_{k+1} - x_k\|^2 + \frac{(1+c_2+c_3)L_f}{\eta_{k+1}} \mathbb{E}\|x_k-x_{k-1}\|^2  \\
    &\quad  + \frac{\alpha_k^2}{L_f\eta_{k+1}} \mathbb{E}\| g_{k-1} - \nabla f(x_{k-1})\|^2+\frac{\alpha_{k}^2 V^2}{L_f\eta_{k+1}}.
\end{align*}
\end{corollary}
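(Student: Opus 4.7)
The plan is to start from the one-step inequality of Lemma \ref{lem: gjr5} and absorb the three coefficients on its right-hand side that differ from the corresponding ones in Corollary \ref{cor: wr4} by enlarging each with the help of the free positive constants $c_1,c_2,c_3$. Concretely, the corrections that must be controlled are: (i) $\frac{\eta_{k+2}^{-1}-\eta_{k+1}^{-1}}{2\rho}$ in the dual-norm coefficient, which must fit under $\frac{c_1}{\rho m}$; (ii) $\frac{\eta_{k+1}^{-1}-\eta_k^{-1}}{2\eta_{k+1}}$ in the $\mathbb{E}\|x_{k+1}-x_k\|^2$ coefficient, which must fit under $\frac{c_3 L_f}{\eta_{k+1}}$; and (iii) $\frac{\eta_{k+1}^{-1}-\eta_k^{-1}}{2\eta_{k+1}}+\frac{\eta_{k+1}^{-2}-\eta_k^{-2}}{2}$ in the $\mathbb{E}\|x_k-x_{k-1}\|^2$ coefficient, which must fit under $\frac{(c_2+c_3)L_f}{\eta_{k+1}}$. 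All other terms (feasibility, the $Q_k$-weighted norms, the second-difference $\|x_{k+1}-2x_k+x_{k-1}\|_{Q_{k+1}}^2$ term, the variance-reduction error, and the $V^2$ term) are identical in the two statements and pass through unchanged.

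Each of those three bounds reduces to showing that $\eta_{k+1}^{-1}-\eta_k^{-1}$ is small compared with $L_f$, and that $\eta_{k+1}^{-2}-\eta_k^{-2}$ is small compared with $L_f/\eta_{k+1}$, uniformly in $k$. Since the step size is $\eta_k = \eta/((k+k_0)^{1/3}\log(k+k_0))$ with $\eta\asymp 1/L_\rho$ and $k_0=\Omega(\mathrm{poly}(\delta^{-1}))$, my plan is a derivative argument: bound $\eta_{k+1}^{-1}-\eta_k^{-1}$ above by the maximum on $[k,k+1]$ of $\tfrac{d}{dt}\!\left[(t+k_0)^{1/3}\log(t+k_0)/\eta\right]$, observe that this derivative is monotone decreasing for $k\ge 0$, and then use $k+k_0\ge k_0$ to make the bound as small as desired by choosing $k_0$ large (the constants $c_1,c_2,c_3$ determine how large). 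The same device, applied to $\eta_k^{-2}$, handles the third correction. I would record these as a single technical fact, exactly in the role played by \ref{fact: fact1}, which is invoked analogously in the proof of Corollary \ref{cor: gjr5} to control $3(\eta_k^{-1}-\eta_{k+1}^{-1})^2$ by $c_4 L_f^2$. Note also that, because $m\asymp 1/L_f$ and $\rho\asymp L_f/\delta$, the three target expressions $\frac{c_1}{\rho m}$, $\frac{c_3 L_f}{\eta_{k+1}}$, $\frac{(c_2+c_3)L_f}{\eta_{k+1}}$ are all $\Theta$-matched in magnitude to the left-hand corrections coming from Lemma \ref{lem: gjr5}, so no logarithmic or $\varepsilon$-dependent slack is needed.

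The main obstacle is purely bookkeeping: tracking the constants through the arithmetic and choosing $k_0$ large enough (in terms of the free constants $c_1,c_2,c_3$ and of $L_f,\rho,\|A\|^2,\delta$) so that all three bounds hold simultaneously for every $k\ge 0$. Once this is arranged, the corollary follows by substituting the enlarged coefficients into Lemma \ref{lem: gjr5} and rearranging: the $\frac{L_f}{2\eta_{k+1}}$ piece already present in the $\mathbb{E}\|x_{k+1}-x_k\|^2$ coefficient of Lemma \ref{lem: gjr5} supplies the ``$1$'' in the factor $(1+2c_3)$, and the $\frac{L_f}{\eta_{k+1}}$ piece in the $\mathbb{E}\|x_k-x_{k-1}\|^2$ coefficient supplies the ``$1$'' in $(1+c_2+c_3)$, which matches the statement of Corollary \ref{cor: wr4} exactly.
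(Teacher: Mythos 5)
Your proposal is correct and follows essentially the same route as the paper: the paper's proof of Corollary \ref{cor: wr4} is exactly to apply the three inequalities \eqref{eq: fact_ineq1}--\eqref{eq: fact_ineq3} of Fact~\ref{fact: fact1} to the three step-size-difference terms in Lemma~\ref{lem: gjr5} and rearrange, which is precisely the absorption scheme you describe (your derivative/mean-value bound is just a mild variant of the paper's direct algebraic estimates inside Fact~\ref{fact: fact1}). The only quibble is your remark that the targets are ``$\Theta$-matched'' to the corrections --- in fact the corrections decay like $(k+k_0)^{-2/3}\log(k+k_0)$ while the targets are constants, so the bounds hold with room to spare once $k_0$ is large; this does not affect correctness.
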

\begin{proof}[Proof of Corollary~\ref{cor: wr4}]
The result is immediate after using \eqref{eq: fact_ineq1},~\eqref{eq: fact_ineq2} and~\eqref{eq: fact_ineq3}  in Fact~\ref{fact: fact1} on the result of Lemma~\ref{lem: gjr5} and rearranging terms.
\end{proof}

We now return to Lemma~\ref{lem: gjr5}
\begin{proof}[Proof of Lemma~\ref{lem: gjr5}]
For this recursion, we use a similar technique to~\cite{hong2016decomposing,hong2018gradient}, extended to use variable step sizes. The additional error terms will appear due to using stochastic gradients and variance reduction in our case, which were not considered in~\cite{hong2016decomposing,hong2018gradient}. We start from~\eqref{eq: wer2} and use the definition of $\lambda_{k+1}$ in Algorithm~\ref{algorithm:lalm_lincons} (that is, $\lambda_{k+1} - \lambda_k = \rho (Ax_{k+1} - b)$) and the definition \eqref{eq:defQ} of $Q_{k+1}$  to obtain
\begin{equation*}
g_k - g_{k-1} + \rho A^\top(Ax_{k+1} - b) + Q_{k+1} (x_{k+1} - 2x_k + x_{k-1})+ (\eta_{k+1}^{-1} - \eta_k^{-1})(x_k - x_{k-1}) = 0.
\end{equation*}
We next take the inner product of this expression with $x_{k+1} - x_k$, divide by $\eta_{k+1}$ and rearrange to get
\begin{align}
    \frac{\rho}{\eta_{k+1}} \langle Ax_{k+1} - b, A(x_{k+1} - x_k)  \rangle &= \frac{1}{\eta_{k+1}}\langle g_{k-1} - g_k - Q_{k+1}(x_{k+1} - 2x_{k} + x_{k-1}), x_{k+1} - x_k \rangle \notag \\ 
    &\quad+ \frac{\eta_{k+1}^{-1}-\eta_k^{-1}}{\eta_{k+1}}\langle x_{k-1} - x_{k}, x_{k+1} - x_k \rangle.\label{eq: htu5}
\end{align}
We now obtain bounds on four parts of this expression in turn.
\begin{enumerate}
    \item By using the identity $2\langle a, b \rangle = \|a\|^2 + \| b\|^2 - \|a-b\|^2$ for any vectors $a$ and $b$, we have that
\begin{align}
    &\frac{\rho}{\eta_{k+1}} \langle Ax_{k+1} - b, A(x_{k+1} - x_k)  \rangle\notag \\
    &= \frac{\rho}{2\eta_{k+1}}\left( \| Ax_{k+1} - b\|^2 + \| A(x_{k+1} - x_k) \|^2 - \|Ax_k - b\|^2 \right) \notag \\
    &\geq\frac{\rho}{2\eta_{k+1}}\left( \| Ax_{k+1} - b\|^2 - \|Ax_k - b\|^2 \right) \notag \\
    &= \frac{\rho}{2\eta_{k+2}} \| Ax_{k+1} - b\|^2 - \frac{\rho}{2\eta_{k+1}} \|Ax_k - b\|^2 + \frac{\eta_{k+1}^{-1} - \eta_{k+2}^{-1}}{2\rho}\|\lambda_{k+1} - \lambda_k\|^2,\label{eq: htu6}
\end{align}
where the last step used the definition of $\lambda_{k+1}$ from Algorithm~\ref{algorithm:lalm_lincons} again, together with addition and subtraction of a term involving $\eta_{k+2}^{-1}$.
\item By using the same identity  and the definition \eqref{eq:defQ} of $Q_{k+1} \succ 0$, we also have
\begin{align}
&-\frac{1}{\eta_{k+1}}\langle Q_{k+1}(x_{k+1} - 2x_{k} + x_{k-1}), x_{k+1} -x_k \rangle\notag \\
&= -\frac{1}{\eta_{k+1}}\langle x_{k+1} - 2x_{k} + x_{k-1}, x_{k+1} -x_k \rangle_{Q_{k+1}}\notag \\
&=-\frac{1}{2\eta_{k+1}}\bigg( \|x_{k+1} - 2x_k + x_{k-1}\|_{Q_{k+1}}^2 + \|x_{k+1} - x_k\|_{Q_{k+1}}^2 - \| x_k - x_{k-1}\|_{Q_{k+1}}^2 \bigg)\notag \\
&\leq -\frac{1}{2\eta_{k+1}}\|x_{k+1} - 2x_k + x_{k-1}\|_{Q_{k+1}}^2 - \frac{1}{2\eta_{k+1}}\|x_{k+1} - x_k\|_{Q_{k+1}}^2 + \frac{1}{2\eta_{k}}\| x_k - x_{k-1}\|_{Q_{k}}^2 \notag \\
&\quad+ \frac{1}{2}\left(\eta_{k+1}^{-2} - \eta_k^{-2}\right)\| x_k - x_{k-1}\|^2,\label{eq: htu7}
\end{align}
where the last step uses \eqref{eq:defQ} together with $0< \eta_{k+1} \leq \eta_k$, i.e., 
\begin{align*}
    &\frac{1}{2\eta_{k+1}} \| x_k - x_{k-1} \|^2_{Q_{k+1}} - \frac{1}{2\eta_{k}} \| x_k - x_{k-1} \|^2_{Q_{k}} \\
    &= \frac{1}{2}\left(\eta_{k+1}^{-2} - \eta_k^{-2}\right)\| x_k - x_{k-1}\|^2 + \langle x_k - x_{k-1}, \left( -\eta_{k+1}^{-1} + \eta_k^{-1} \right) \rho A^\top A(x_k - x_{k-1})\rangle\\
    & \leq \frac{1}{2}\left(\eta_{k+1}^{-2} - \eta_k^{-2}\right)\| x_k - x_{k-1}\|^2.
\end{align*}
\item By Young's inequality, we have
\begin{align}
\frac{\eta_{k+1}^{-1} - \eta_k^{-1}}{\eta_{k+1}}\langle x_{k-1} - x_k, x_{k+1} - x_k \rangle &\leq \frac{\eta_{k+1}^{-1} - \eta_k^{-1}}{2\eta_{k+1}} \left(  \| x_k - x_{k-1}\|^2 + \| x_{k+1} - x_k\|^2\right).\label{eq: htu9}
\end{align}
\item By Young's inequality and~\eqref{eq: hjy3} (replacing $k$ by $k-1$), we get
\begin{align}
    &\frac{1}{\eta_{k+1}}\mathbb{E}\langle g_{k-1} - g_k, x_{k+1} - x_k \rangle \notag \\
    &\leq \frac{1}{2L_f\eta_{k+1}} \mathbb{E}\| g_k - g_{k-1} \|^2 + \frac{L_f}{2\eta_{k+1}} \mathbb{E}\| x_{k+1} - x_k\|^2\notag\\
    &\leq \frac{1}{2L_f\eta_{k+1}} \left( 2L_f^2\mathbb{E} \| x_{k} - x_{k-1}\|^2 + 2\alpha_{k}^2 V^2 + 2\alpha_{k}^2 \mathbb{E}\|g_{k-1}- \nabla f(x_{k-1}) \|^2 \right) \notag \\&\quad+ \frac{L_f}{2\eta_{k+1}} \mathbb{E}\| x_{k+1} - x_k\|^2.\label{eq: htu8}
\end{align}
\end{enumerate}
We get the result by taking expectation of~\eqref{eq: htu5} and substituting from \eqref{eq: htu6},~\eqref{eq: htu7},~\eqref{eq: htu9}, and~\eqref{eq: htu8}.
\end{proof}

\subsection{Main result}\label{app: lincons_main}
In this section, for arbitrary positive values of $c_1,c_2,c_3,c_4$ we define the parameters $\eta_k$, $m$, $\rho$, and $\alpha_k$ in the following way (recall the definitions of $L_f$ and $\delta$ from \eqref{eq: asp_case1} and \eqref{eq:def.delta}): 
\begin{equation}\label{eq: params_sec1_app}
\begin{aligned}
c& =121L_f^2,  \\
m &= \min\left( \frac{1}{448L_f}, \frac{1}{32(1+c_2+c_3)L_f}, \frac{1}{8(1+2c_3)L_f} \right)\\
\rho &= \max\left( \frac{7(1+c_1)}{m\delta}, \frac{4(6+c_4)(1+c_1)L_f}{\delta},\frac{168(1+c_1)L_f}{\delta} \right),\\
\eta &= \frac{1}{11(L_f+\rho\|A\|^2)}, \\
k_0 &= \max\left( \left(\frac{10m}{3c_1\eta}\right)^2, \left(\frac{20}{3\eta c_2L_f}\right)^2 , \left(\frac{10}{3c_3L_f}\right)^2, \frac{400}{3\eta^2 c_4 L_f^2}, \left(\frac{20}{c\eta^2}\right)^4, \left(\frac{50}{3c\eta^2} \right)^6, 2 \right) \\
\eta_k &= \frac{\eta}{(k+k_0)^{1/3}\log(k+k_0)} \\    
\alpha_k &= c\eta_k^2. 
\end{aligned}
\end{equation}
We should remark that the constants of these parameters and also constants in other bounds in the paper are not optimized, since we focus on the dependence on $\varepsilon$ in this work. 
The constants certainly could  be improved at the expense of even great complexity in the analysis.
\begin{replemma}{lem: skl4}
Let the assumptions in~\eqref{eq: asp_case1} hold and $\eta_k$ be set as~\eqref{eq: params_sec1_app}. For the iterates of Algorithm~\ref{algorithm:lalm_lincons}, we have for $k \geq 1$ that
\begin{align}
\mathbb{E}Y_{k+1} &\leq \mathbb{E}Y_k  - \frac{\eta_{k+1}}{2} \mathbb{E} \| g_k - \nabla f(x_k)\|^2  + (\beta_{1, k+1} + \beta_{2, k+1}) \mathbb{E} \| x_{k+1} - x_k\|^2 + w_k + v_k V^2,\label{eq: gjt7}
\end{align}
where
    \begin{equation}
\begin{aligned}
Y_{k+1} &= L_{\rho}(x_{k+1}, \lambda_{k+1}) + \frac{\rho m}{2\eta_{k+2}} \| Ax_{k+1} - b\|^2 + \frac{m}{2\eta_{k+1}} \| x_{k+1} - x_k\|^2_{Q_{k+1}} + \beta_{1, k+1} \| x_{k+1} - x_k\|^2 \\
    &\quad+ \frac{2}{c\eta_{k+1}} \| g_{k+1} - \nabla f(x_{k+1})\|^2 + \left(\frac{6(1+c_1)}{\rho\delta} + \frac{4m}{L_f\eta_k} \right)  \| g_{k} - \nabla f(x_{k})\|^2\label{eq: gjf4}
\end{aligned}
\end{equation}
and
\begin{align*}
    \beta_{1, k} &= \frac{(1+c_2+c_3)L_f m}{\eta_{k+1}} + \frac{(6+c_4)(1+c_1)L_f^2}{\rho\delta} + \frac{42(1+c_1)L_f^2}{\rho\delta} + \frac{28mL_f}{\eta_k},\\
    \beta_{2, k} &= \frac{L_\rho}{2} + \frac{(1+2c_3)mL_f}{2\eta_{k}} + \frac{14L_f^2}{c\eta_{k}} - \frac{1}{2\eta_{k}}, \\
    w_k &= \frac{3(1+c_1)}{\delta\rho} \mathbb{E}\| x_{k+1} - 2x_k + x_{k-1}\|^2_{Q_{k+1}^\top Q_{k+1}} - \frac{m}{2\eta_{k+1}}\mathbb{E} \| x_{k+1} - 2x_k + x_{k-1} \|^2_{Q_{k+1}},\\
    v_k &= \frac{6(1+c_1)\alpha_{k}^2}{\delta\rho} + \frac{\alpha_k^2 m}{L_f\eta_{k+1}} + \frac{6\alpha_{k+1}^2}{c\eta_{k+1}} + \frac{18(1+c_1)\alpha_k^2}{\rho\delta} + \frac{12m\alpha_k^2}{L_f\eta_k}.
\end{align*}
\end{replemma}
\begin{proof}
    We combine the bounds of Corollaries~\ref{cor: gjr5} and \ref{cor: wr4} after multiplying the latter by the scalar $m$ of \eqref{eq: params_sec1} (by noting cancellation of the terms with $\|\lambda_{k+1}-\lambda_k\|^2$ and after adding and subtracting $\frac{\eta_{k+1}}{2} \mathbb{E} \| g_k - \nabla f(x_k)\|^2$):
\begin{align}
    &\mathbb{E}L_\rho(x_{k+1}, \lambda_{k+1}) + \frac{\rho m}{2\eta_{k+2}} \mathbb{E}\| Ax_{k+1} - b\|^2 + \frac{m}{2\eta_{k+1}} \mathbb{E}\| x_{k+1} - x_k\|^2_{Q_{k+1}} \notag \\
    &
    \leq \mathbb{E}L_{\rho}(x_k, \lambda_k) + \frac{\rho m}{2\eta_{k+1}}\mathbb{E} \| Ax_{k} - b\|^2 + \frac{m}{2\eta_k} \mathbb{E}\| x_{k} - x_{k-1}\|^2_{Q_k}-\frac{\eta_{k+1}}{2}  \mathbb{E}\| g_k - \nabla f(x_k) \|^2\notag \\
    &\quad + \left( \frac{L_{\rho}}{2} + \frac{(1+2c_3)mL_f}{2\eta_{k+1}} - \frac{1}{2\eta_{k+1}} \right) \mathbb{E}\| x_{k+1} - x_k\|^2 \notag \\&\quad+ \left( \frac{(1+c_2+c_3)L_f m}{\eta_{k+1}} + \frac{(6+c_4)(1+c_1)L_f^2}{\rho\delta}\right) \mathbb{E}\| x_k - x_{k-1} \|^2\notag \\
    &\quad +  \eta_{k+1} \mathbb{E}\| g_k - \nabla f(x_k) \|^2 + \left(\frac{m\alpha_k^2}{L_f\eta_{k+1}}+\frac{6(1+c_1)\alpha_{k}^2}{\rho\delta}\right)\mathbb{E} \| g_{k-1} - \nabla f(x_{k-1}) \|^2 \label{eq: seg4}\\
    &\quad +\frac{3(1+c_1)}{\rho\delta} \mathbb{E}\| x_{k+1} - 2x_k + x_{k-1}\|^2_{Q_{k+1}^\top Q_{k+1}} - \frac{m}{2\eta_{k+1}}\mathbb{E} \| x_{k+1} - 2x_k + x_{k-1} \|^2_{Q_{k+1}}\notag \\
    &\quad + \left( \frac{6(1+c_1)\alpha_{k}^2}{\rho\delta} + \frac{\alpha_k^2 m}{L_f\eta_{k+1}} \right) V^2.\label{eq: ghj3}
\end{align}
Since this expression looks rather daunting, we take a closer look at the different terms. 
On a high level, we see that the terms in the first and second lines telescope. 
For the terms in the third and fourth lines, and the terms in the sixth line, we show that our  parameter choices make them nonpositive or telescoping. 
The size of the terms in the last line can be controlled  by the choice of $\alpha_k$. The details will be spelled out in Theorem~\ref{th: main1}. What remains is to handle the fifth line by variance reduction recursion from Lemma~\ref{lem: var_bd}.

We now  use the definitions of $\beta_{1,k}$, $\beta_{2,k}$, and $v_k$ in the statement of the lemma. 
These terms  include not only the coefficients of $\mathbb{E} \| x_k - x_{k-1}\|^2$, $\mathbb{E}\|x_{k+1} -x_k\|^2$, and $V^2$ but also the contribution for the corresponding terms that will come when we use Lemma~\ref{lem: var_bd} to bound the terms on~\eqref{eq: seg4}, \emph{cf.} \eqref{eq: lso3}. 
In particular, with Lemma~\ref{lem: var_bd} to bound the terms on~\eqref{eq: seg4}, and the definitions of $\beta_{1, k}, \beta_{2, k+1}, v_k, w_k$ in the statement of the lemma, we have 
\begin{align}
    &\mathbb{E}L_{\rho}(x_{k+1}, \lambda_{k+1}) + \frac{\rho m}{2\eta_{k+2}} \mathbb{E}\| Ax_{k+1} - b\|^2 + \frac{m}{2\eta_{k+1}} \mathbb{E}\| x_{k+1} - x_k\|^2_{Q_{k+1}}  \notag \\
    &\quad + \frac{2}{c\eta_{k+1}}  \mathbb{E} \| g_{k+1} - \nabla f(x_{k+1})\|^2 + \left(\frac{6(1+c_1)}{\rho\delta} + \frac{4m}{L_f\eta_{k}}\right)\mathbb{E} \| g_{k} - \nabla f(x_{k})\|^2 \notag \\
    &\leq \mathbb{E}L_{\rho}(x_k, \lambda_k) + \frac{\rho m}{2\eta_{k+1}} \mathbb{E}\| Ax_{k} - b\|^2 + \frac{m}{2} \mathbb{E}\| x_{k} - x_{k-1}\|^2_{Q_k}-\frac{\eta_{k+1}}{2} \mathbb{E}\| g_k - \nabla f(x_k)\|^2 \notag \\
    &\quad+ \frac{2}{c\eta_{k}}  \mathbb{E} \| g_{k} - \nabla f(x_{k})\|^2 + \left(\frac{6(1+c_1)}{\rho\delta} + \frac{4m}{L_f\eta_{k-1}}\right)\mathbb{E} \| g_{k-1} - \nabla f(x_{k-1})\|^2 \notag \\
    &\quad + \beta_{2, k+1} \mathbb{E}\| x_{k+1} - x_k\|^2 + \beta_{1, k} \mathbb{E}\| x_k - x_{k-1} \|^2 + w_k  + v_k V^2.\notag
\end{align}
By adding $\beta_{1, k+1} \mathbb{E}\|x_{k+1} - x_k\|^2$ to both sides and using the definition of $Y_k$, we have the result.
\end{proof}

We continue with the restatement and proof of Theorem~\ref{th: main1}.

\begin{reptheorem}{th: main1}
Let the assumptions in~\eqref{eq: asp_case1} hold and supppose that $\eta_k$ and the other algorithmic parameters are chosen as in~\eqref{eq: params_sec1_app}.
Then, for the iterates of Algorithm~\ref{algorithm:lalm_lincons}, we have for any $K\geq 1$ that 
\begin{align*}
\frac{1}{K}\sum_{k=1}^{K+1} \mathbb{E} \left[ \| \eta_{k}^{-1}(x_{k} - x_{k-1})\|^2 + \| g_{k-1} - \nabla f(x_{k-1})\|^2 \right]=O\left(\frac{1}{\eta_{K+1} K} \right) = \tilde O(K^{-2/3}).
\end{align*}
\end{reptheorem}

\begin{proof}
We start with the result of Lemma~\ref{lem: skl4}.
To show that the terms involving $w_k$, $\beta_{1,k+1}$, and $\beta_{2,k+1}$ in the right-hand side of~\eqref{eq: gjt7} sum up to a nonpositive constant, we will show that 
\begin{subequations}\label{eq: hee4}
\begin{align}
    &Q_{k+1} = \eta_{k+1}^{-1}I-\rho A^\top A \succ 0,\label{eq: hee4_2}\\
    & \frac{m}{2\eta_{k+1}} Q_{k+1} - \frac{3(1+c_1)}{\rho\delta}Q_{k+1}^\top Q_{k+1} \succeq 0,\label{eq: hee4_1}\\
    &\beta_{1, k+1}+\beta_{2,k+1}=\frac{L_{\rho}}{2} + \frac{(1+2c_3)mL_f}{2\eta_{k+1}} + \frac{14L_f^2}{c\eta_{k+1}} - \frac{1}{2\eta_{k+1}} \notag \\
    &\quad+ \frac{(1+c_2+c_3)L_fm}{\eta_{k+2}} + \frac{(6+c_4)(1+c_1)L_f^2}{\rho\delta} + \frac{42(1+c_1)L_f^2}{\rho\delta} + \frac{28mL_f}{\eta_{k+1}} \leq -\frac{1}{16\eta_{k+1}},\label{eq: hee4_3}
\end{align}
\end{subequations}
where the definitions of $\beta_{1, k}, \beta_{2, k}$ are given in Lemma~\ref{lem: skl4}.

First, we know that~\eqref{eq: hee4_2} is satisfied since $\eta_{k+1} < \eta \le 1/(\rho \|A\|^2)$.
The positive semidefiniteness condition \eqref{eq: hee4_1} is satisfied when $m \geq \frac{7(1+c_1)}{\rho\delta}$ (which is ensured by the definition of $\rho$) since $\frac{m}{2} \geq \frac{3(1+c_1)}{\rho\delta}$ and since
\begin{equation*}
Q_{k+1} = \frac{1}{\eta_{k+1}}I - \rho A^\top A \succ 0  \implies \frac{1}{\eta_{k+1}}Q_{k+1} - Q_{k+1}^\top Q_{k+1}\succeq 0,
\end{equation*}
as can be verified by direct substitution of $Q_{k+1}$. 
We thus have from the definition of $w_k$ in Lemma~\ref{lem: skl4}  that  $w_k \leq 0$.

Finally, we verify~\eqref{eq: hee4_3}. 
With $m, \rho, c$ defined in \eqref{eq: params_sec1}, and using $\eta_{k+2}^{-1}\leq 2\eta_{k+1}^{-1}$, see for example Fact~\ref{fact: fact1}, we have that 
\begin{align*}
    -\frac{1}{2\eta_{k+1}} + \frac{(1+2c_3)mL_f}{2\eta_{k+1}}+ \frac{14L_f^2}{c\eta_{k+1}} + \frac{(1+c_2+c_3)L_f m}{\eta_{k+2}} + \frac{28mL_f}{\eta_{k+1}} &\leq -\frac{3}{16\eta_{k+1}},\\
    \frac{L_\rho}{2} + \frac{(6+c_4)(1+c_1)L_f^2}{\rho\delta} + \frac{42(1+c_1)L_f^2}{\rho\delta} &\leq \frac{L_\rho}{2} + \frac{L_f}{2} \le L_{\rho},
\end{align*}
where the final inequality follows from \eqref{eq:Lrho}. Since the left-hand sides of these two inequalitites sum to $\beta_{1,k+1}+\beta_{2,k+1}$, \eqref{eq: hee4_3} holds if we can show that 
\begin{equation*}
    L_\rho \leq \frac{1}{8\eta_{k+1}} \iff \eta_{k+1} \leq \frac{1}{8L_\rho},
\end{equation*}
which is implied by $\eta \leq \frac{1}{11L_\rho}$ in \eqref{eq: params_sec1}.

As shown above, we have $w_k \le 0$ with the selected parameters. By substituting this bound together with  \eqref{eq: hee4_3}  into~\eqref{eq: gjt7}, we obtain
\begin{align}
\mathbb{E}Y_{k+1} &\leq \mathbb{E}Y_k  -\frac{1}{16\eta_{k+1}}\mathbb{E} \| x_{k+1} - x_k\|^2 - \frac{\eta_{k+1}}{2} \mathbb{E} \| g_k - \nabla f(x_k)\|^2  + v_k V^2,\label{eq: gjt8}
\end{align}
By adding to both sides $\frac{1}{32\eta_k}\mathbb{E}\| x_k - x_{k-1}\|^2 + \frac{\eta_{k}}{4} \mathbb{E} \| g_{k-1} - \nabla f(x_{k-1})\|^2$ and rearranging, we get
\begin{align}
 &\frac{1}{32\eta_{k+1}}\mathbb{E} \| x_{k+1} - x_k\|^2 + \frac{1}{32\eta_k} \mathbb{E} \| x_{k} - x_{k-1}\|^2 + \frac{\eta_{k+1}}{4} \mathbb{E} \| g_k - \nabla f(x_k)\|^2 + \frac{\eta_{k}}{4} \mathbb{E} \| g_{k-1} - \nabla f(x_{k-1})\|^2 \notag  \\
 &\leq \Big( \mathbb{E}Y_k + \frac{1}{32\eta_{k}}\| x_{k} - x_{k-1}\|^2 + \frac{\eta_{k}}{4}\| g_{k-1} - \nabla f(x_{k-1})\|^2 \Big) \notag \\
 &\quad- \Big( \mathbb{E}Y_{k+1} + \frac{1}{32\eta_{k+1}}\| x_{k+1} - x_k\|^2 + \frac{\eta_{k+1}}{4}\| g_k - \nabla f(x_k)\|^2 \Big)    + v_k V^2,\label{eq: gjt9}
\end{align}
From the definition $\alpha_k=c\eta_k^2$ with $\eta_k = \frac{\eta}{(k+k_0)^{1/3}\log(k+k_0)}$ in  \eqref{eq: params_sec1_app}, we have $\sum_{k=1}^\infty {\alpha_{k+1}^2}/{\eta_{k+1}} = O(1)$ and $\sum_{k=1}^{\infty} \alpha_k^2 = O(1)$. 
It follows from the definition of $v_k$ in Lemma~\ref{lem: skl4} that $\sum_{k=1}^{\infty} v_k = O(1)$.
Thus by summing the inequality \eqref{eq: gjt9} over $k=1,2,\dotsc,K$ and telescoping the right-hand side, we obtain
\begin{align}
&\sum_{k=1}^{K} \frac{1}{32\eta_{k+1}}\mathbb{E} \| x_{k+1} - x_k\|^2 + \frac{1}{32\eta_k}\mathbb{E} \| x_{k} - x_{k-1}\|^2 + \frac{\eta_{k+1}}{4} \mathbb{E} \| g_k - \nabla f(x_k)\|^2 + \frac{\eta_{k}}{4} \mathbb{E} \| g_{k-1} - \nabla f(x_{k-1})\|^2 \notag \\
&\leq \Big( \mathbb{E}Y_1 + \frac{1}{32\eta_1}\| x_1-x_0\|^2 + \frac{\eta_1}{4}\|g_0 - \nabla f(x_0)\|^2 \Big) \notag \\
&\quad-\Big(\mathbb{E} Y_{K+1} +\frac{1}{32\eta_{K+1}}\|x_{K+1} - x_K\|^2 + \frac{\eta_K}{4} \| g_K - \nabla f(x_K)\|^2 \Big)+ O(1) \notag \\
&\leq \Big( \mathbb{E}Y_1 + \frac{1}{32\eta_1}\| x_1-x_0\|^2 + \frac{\eta_1}{4}\|g_0 - \nabla f(x_0)\|^2 \Big) -\mathbb{E} Y_{K+1} +O(1). \label{eq: okj4}
\end{align}
For the terms on the left-hand side, we have for $k=1,2,\dotsc,K$, using $\eta_k \geq \eta_{k+1} \geq \eta_{K+1}$, that
\begin{align*}
    & \frac{1}{32\eta_{k+1}}\mathbb{E} \| x_{k+1} - x_k\|^2 + \frac{1}{32\eta_k}\mathbb{E} \| x_{k} - x_{k-1}\|^2 \\
    & =\frac{\eta_{k+1}}{32} \mathbb{E}\| \eta_{k+1}^{-1}(x_{k+1} - x_k) \|^2 + \frac{\eta_k}{32}\mathbb{E} \|\eta_k^{-1} (x_{k} - x_{k-1})\|^2 \\
    & \ge \frac{\eta_{K+1}}{32}   \left[ \mathbb{E}\| \eta_{k+1}^{-1}(x_{k+1} - x_k) \|^2 + \mathbb{E} \|\eta_k^{-1} (x_{k} - x_{k-1})\|^2 \right],
\end{align*}
and similarly
\[
\frac{\eta_{k+1}}{4} \mathbb{E} \| g_k - \nabla f(x_k)\|^2 + \frac{\eta_{k}}{4} \mathbb{E} \| g_{k-1} - \nabla f(x_{k-1})\|^2
\ge
\frac{\eta_{K+1}}{4} \left[ \mathbb{E} \| g_k - \nabla f(x_k)\|^2 +  \mathbb{E} \| g_{k-1} - \nabla f(x_{k-1})\|^2\right].
\]
Thus, by adding successive terms on the left-hand side of \eqref{eq: okj4}, and using these bounds, we have
\begin{align*}
&\sum_{k=1}^{K} \frac{1}{32\eta_{k+1}}\mathbb{E} \| x_{k+1} - x_k\|^2 + \frac{1}{32\eta_k}\mathbb{E} \| x_{k} - x_{k-1}\|^2 + \frac{\eta_{k+1}}{4} \mathbb{E} \| g_k - \nabla f(x_k)\|^2 + \frac{\eta_{k}}{4} \mathbb{E} \| g_{k-1} - \nabla f(x_{k-1})\|^2  \\
&\ge \eta_{K+1} \sum_{k=1}^{K+1}\left( \frac{1}{32} \mathbb{E} \| \eta_k^{-1}(x_{k} - x_{k-1}) \|^2 + \frac{1}{4} \mathbb{E} \| g_{k-1} - \nabla f(x_{k-1})\|^2\right).
\end{align*}
By substituting this lower bound into \eqref{eq: okj4}, joining the constant coefficients in the left-hand side to the $O(1)$ term on the right-hand side and using  $-Y_{K+1} = O(1)$ (by lower boundedness of the potential function to be shown below in Lemma~\ref{lem: potential_lb}), we have the result after dividing both sides by $\eta_{K+1}K$.
\end{proof}

\begin{corollary}\label{cor: sor4}
    Let the assumptions in~\eqref{eq: asp_case1} hold. With the parameter choices in~\eqref{eq: params_sec1_app}, Algorithm~\ref{algorithm:lalm_lincons} outputs $(\bar x, \bar\lambda)$ such that
\begin{equation*}
\mathbb{E}\| \nabla f(\bar x) + A^\top \bar \lambda \| \leq \varepsilon, \text{~and~} \mathbb{E}\| A\bar x - b\| \leq \varepsilon,
\end{equation*}
within $K = \tilde O(\varepsilon^{-3})$ iterations.
\end{corollary}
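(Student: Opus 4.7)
The plan is to transfer the $\widetilde O(K^{-2/3})$ bound on the averaged squared quantities $\|\eta_k^{-1}(x_k-x_{k-1})\|^2 + \|g_{k-1}-\nabla f(x_{k-1})\|^2$ from Theorem~\ref{th: main1} into $\widetilde O(K^{-1/3})$ bounds on the two first-order residuals $\|\nabla f(x_k)+A^\top\lambda_k\|$ and $\|Ax_k-b\|$, and then solve for $K$. The output $(\bar x,\bar\lambda)=(x_{\hat k},\lambda_{\hat k})$ with $\hat k$ uniform in $\{1,\dots,K\}$ converts an average over $k$ into an expectation.

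First I would express the primal stationarity residual at $(x_{k+1},\lambda_{k+1})$ purely in terms of the quantities controlled by Theorem~\ref{th: main1}. Using the primal step written in the form \eqref{eq: yui5} together with the dual update $\lambda_{k+1}-\lambda_k=\rho(Ax_{k+1}-b)$, one obtains
\begin{equation*}
\nabla f(x_k)+A^\top\lambda_{k+1} \;=\; \bigl(\nabla f(x_k)-g_k\bigr) - Q_{k+1}(x_{k+1}-x_k),
\end{equation*}
and then a single application of $L_f$-smoothness of $f$ gives
\begin{equation*}
\|\nabla f(x_{k+1})+A^\top\lambda_{k+1}\| \;\le\; \|g_k-\nabla f(x_k)\| + \bigl(L_f+\|Q_{k+1}\|\bigr)\|x_{k+1}-x_k\|,
\end{equation*}
with $\|Q_{k+1}\|\le\eta_{k+1}^{-1}+\rho\|A\|^2=O(\eta_{k+1}^{-1})$ by the choice of $\eta$ in \eqref{eq: params_sec1_app}. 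Thus the stationarity residual is controlled pointwise by $\|g_k-\nabla f(x_k)\|+O(1)\|\eta_{k+1}^{-1}(x_{k+1}-x_k)\|$.

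Next I would handle feasibility. Since $\lambda_{k+1}-\lambda_k=\rho(Ax_{k+1}-b)\in\mathrm{Range}(A)$, the regularity \eqref{eq:def.delta} yields $\|Ax_{k+1}-b\|^2\le\delta^{-1}\rho^{-2}\|A^\top(\lambda_{k+1}-\lambda_k)\|^2$. I would then plug in the bound \eqref{eq: hjy4} established inside the proof of Lemma~\ref{lem: gjr4}; after bounding $\|Q_{k+1}(x_{k+1}-2x_k+x_{k-1})\|^2$ by $O(\eta_{k+1}^{-2})(\|x_{k+1}-x_k\|^2+\|x_k-x_{k-1}\|^2)$ and using $(\eta_k^{-1}-\eta_{k+1}^{-1})^2\le\eta_k^{-2}$ (Fact~\ref{fact: fact1}), this produces
\begin{equation*}
\E\|Ax_{k+1}-b\|^2 \;=\; O\!\left(\E\|\eta_{k+1}^{-1}(x_{k+1}-x_k)\|^2+\E\|\eta_k^{-1}(x_k-x_{k-1})\|^2+\alpha_k^2\,\E\|g_{k-1}-\nabla f(x_{k-1})\|^2+\alpha_k^2 V^2\right).
\end{equation*}
Because $\alpha_k=O(\eta_k^2)=o(1)$ and $\sum_k\alpha_k^2=O(1)$, averaging over $k=1,\dots,K$ and invoking Theorem~\ref{th: main1} gives both $\tfrac1K\sum_k\E\|Ax_k-b\|^2=\widetilde O(K^{-2/3})$ and $\tfrac1K\sum_k\E\|\nabla f(x_k)+A^\top\lambda_k\|^2=\widetilde O(K^{-2/3})$.

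Finally, a single Jensen step (concavity of $\sqrt{\cdot}$) converts these averaged squared bounds into $\tfrac1K\sum_k\E\|\cdot\|=\widetilde O(K^{-1/3})$, which is exactly $\E$ of the norm at the uniformly random index $\hat k$. Setting $K^{-1/3}\asymp\varepsilon$ (up to the logarithmic factor hidden in $\widetilde O$ from $\eta_k$'s $\log(k+k_0)$ factor) gives $K=\widetilde O(\varepsilon^{-3})$ and completes the corollary. I expect the main obstacle to be purely bookkeeping: carefully verifying that the coefficients on each error term on the right-hand side of the feasibility bound are indeed absorbed into constants by the specific choices in \eqref{eq: params_sec1_app}, in particular that $L_f^2/(\rho^2\delta)$, $\alpha_k^2$, and $(\eta_k^{-1}-\eta_{k+1}^{-1})^2/\eta_{k+1}^{-2}$ are all uniformly $O(1)$ (or summable) so that the averaged version of \eqref{eq: hjy4} collapses to exactly the two quantities controlled by Theorem~\ref{th: main1}; no new potential-function or lower-boundedness argument is needed beyond what is already in place.
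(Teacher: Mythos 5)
Your proposal is correct and follows essentially the same route as the paper's proof: expressing the stationarity residual via the update identity \eqref{eq: yui5} plus $L_f$-smoothness, bounding feasibility through $\|A^\top(\lambda_{k+1}-\lambda_k)\|^2\geq\rho^2\delta\|Ax_{k+1}-b\|^2$ together with the recursion for $\lambda_{k+1}-\lambda_k$, averaging against Theorem~\ref{th: main1}, and finishing with Jensen. The only cosmetic difference is that you invoke the already-established expectation bound \eqref{eq: hjy4} for the feasibility term (picking up a harmless, summable $\alpha_k^2V^2$ contribution), whereas the paper re-derives an analogous bound from \eqref{eq: wer3} with a slightly different splitting of $\|g_k-g_{k-1}\|^2$; both yield the same $\widetilde O(K^{-2/3})$ rate and hence $K=\widetilde O(\varepsilon^{-3})$.
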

\begin{proof}
A useful preliminary result is as follows.
For $\hat k$ selected uniformly at random from $\{1,2,\dots, K\}$, we have
\begin{align}\label{eq: ehr}
&\mathbb{E}\left[ \| \eta_{\hat k+1}^{-1}(x_{\hat k+1} - x_{\hat k}) \|^2+\| \eta_{\hat k}^{-1}(x_{\hat k} - x_{\hat k-1}) \|^2 + \| g_{\hat k} - \nabla f(x_{\hat k}) \|^2 + \| g_{\hat k-1} - \nabla f(x_{\hat k-1}) \|^2 \right] \notag \\
&=\frac{1}{K}\sum_{k=1}^K \mathbb{E}\left[ \| \eta_{k+1}^{-1}(x_{k+1} - x_k) \|^2+\| \eta_{k}^{-1}(x_{k} - x_{k-1}) \|^2 + \| g_k - \nabla f(x_k) \|^2 + \| g_{k-1} - \nabla f(x_{k-1}) \|^2 \right] \notag \\
& \le \frac{2}{K} \sum_{k=1}^{K+1} \mathbb{E} \left[ \| \eta_{k}^{-1}(x_{k} - x_{k-1})\|^2 + \| g_{k-1} - \nabla f(x_{k-1})\|^2 \right] \notag \\
&= \tilde O(K^{-2/3}),
\end{align}
where the last step used the result of Theorem~\ref{th: main1}.
As a result, the right-hand side of \eqref{eq: ehr} is guaranteed to be less that $\varepsilon^2$ 
within $K = \tilde{O}(\varepsilon^{-3})$ iterations. 

To show the stationarity condition, we have by the definition of $x_{k+1}, \lambda_{k+1}$ in Algorithm~\ref{algorithm:lalm_lincons} and the triangle inequality that (see also \eqref{eq: yui4.5}) 
\begin{align*}
&& x_{k+1} &= x_k - \eta_{k+1} (g_k + A^\top \lambda_{k+1} + \rho A^\top A(x_k - x_{k+1})) \\
& \iff & x_{k+1} + \eta_{k+1}(g_k - \nabla f(x_k)) &= x_k - \eta_{k+1} (\nabla f(x_k) + A^\top \lambda_{k+1} + \rho A^\top A(x_k - x_{k+1})) \\
& \iff & \nabla f(x_{k+1}) + A^\top \lambda_{k+1} &= \eta_{k+1}^{-1}(x_k - x_{k+1}) + \rho A^\top A(x_{k+1} - x_k) \\
&&& \quad - (g_k - \nabla f(x_k)) + \nabla f(x_{k+1}) - \nabla f(x_k) \\
& \implies  & \| \nabla f(x_{k+1}) + A^\top \lambda_{k+1} \| &\leq  \| \eta_{k+1}^{-1}(x_k - x_{k+1}) \| +  \| g_k - \nabla f(x_k) \| + (\rho \| A\|^2+L_f) \| x_k -x_{k+1}\|,
\end{align*}
where, after taking expectation and using $k=\hat k$, all the terms on the right-hand side are smaller than $\varepsilon$ in expectation after $K = \tilde{O}(\varepsilon^{-3})$ iterations, due to~\eqref{eq: ehr} and Jensen's inequality.
Setting $\bar{x} = x_{\hat{k}+1}$ and $\bar\lambda = \lambda_{\hat{k}+1}$, we thus have $\E (\| \nabla f(\bar{x}) + A^\top \bar\lambda \|) \le \varepsilon$.

It remains to show that $\| A x_{\hat{k}+1} - b \| \le \varepsilon$ for this same value of $\hat{k}$.
For any $k$, since $\lambda_{k+1} - \lambda_k = \rho (Ax_{k+1}- b)$, $\lambda_{k+1} - \lambda_k$ is in the range of $A$.
Since $\delta$ is the smallest nonzero eigenvalue of $A^\top A$, we have (see also \eqref{eq:def.delta})
\begin{equation} \label{eq:eh1}
    \|A^\top(\lambda_{k+1} - \lambda_k) \|^2 \geq \delta\| \lambda_{k+1} - \lambda_k\|^2 = \rho^2 \delta \| Ax_{k+1} - b\|^2.
\end{equation}
By \eqref{eq: wer3}, together with $\eta_{k+1}^{-1} I \succ \eta_{k+1}^{-1} I -\rho A^\top A \succ 0 $, $0<\eta_{k+1} \leq\eta_k$, and Young's inequality, we have that
\begin{align*}
 \| A^\top (\lambda_k - \lambda_{k+1}) \|^2 
 &\leq 3 \| g_k - g_{k-1} \|^2 +  \frac{6}{\eta_{k+1}^2}\left(\|x_{k+1} - x_k\|^2 + \| x_k - x_{k-1}\|^2 \right) +  \frac{3}{\eta_{k+1}^2}\|x_k - x_{k-1}\|^2 \\
 & \leq 9\| g_k - \nabla f(x_k)\|^2 + 9 \| g_{k-1} - \nabla f(x_{k-1})\|^2 + 9\| \nabla f(x_k) - \nabla f(x_{k-1})\|^2 \\
 &\quad+ \frac{6}{\eta_{k+1}^2}\left(\|x_{k+1} - x_k\|^2 + \| x_k - x_{k-1}\|^2 \right)   + \frac{3}{\eta_{k+1}^2}\|x_k - x_{k-1}\|^2.
\end{align*}
As shown in Fact~\ref{fact: fact1} we have $\eta_k \leq 2\eta_{k+1}$ and consequently $\frac{1}{\eta_{k+1}} \leq \frac{2}{\eta_k}$. 
Combining these with Lipschitzness of $\nabla f$ and $L_f^2 \leq \eta_k^{-2}$ in the last inequality, we obtain
    \begin{align}
 \| A^\top(\lambda_{k+1} - \lambda_k) \|^2 &= O\Big(\| g_k - \nabla f(x_k)\|^2 + \| g_{k-1} -\nabla f(x_{k-1})\|^2 \notag \\
 &\quad + \| \eta_{k+1}^{-1} (x_{k+1} - x_k)\|^2 + \|\eta_{k}^{-1} (x_k - x_{k-1}) \|^2\Big).\label{eq: sry3}
\end{align}
Particularly, by \eqref{eq: ehr}, the last estimate implies
\[
 \mathbb{E}\| A^\top(\lambda_{\hat k+1} - \lambda_{\hat k}) \|^2=\tilde O(K^{-2/3}).
\]
In view of \eqref{eq:eh1}, this gives
\[
\mathbb{E}\| Ax_{\hat{k}+1}- b\|^2 \leq \frac{1}{\rho^2 \delta} \E \| A^\top (\lambda_{\hat{k}+1} - \lambda_{\hat{k}}) \| = \tilde O(K^{-2/3}).
\]
As a result, for $K = \tilde{O}(\varepsilon^{-3})$, by Jensen's inequality we have that $\E \| A \bar{x}-b \| \leq \varepsilon$ for $\bar{x} = x_{\hat{k}+1}$, as required.
\end{proof}

\subsection{Auxiliary results used in the analysis}
The following lemma is about control of the variance of the estimator, specializing the preliminary Lemma~\ref{lem: storm_estimator_genform} for the current purpose.
\begin{lemma}\label{lem: var_bd}
    Let the assumptions in~\eqref{eq: asp_case1} hold and $g_k$ be defined as Algorithm~\ref{algorithm:lalm_lincons}. For $c$ such that $\alpha_k=c\eta_k^2\leq 1$, $\eta_k$ as in~\eqref{eq: params_sec1_app} and any $m, c_1, \rho, \delta$, we have
    \begin{equation}\label{eq: lso3}
    \begin{aligned}
        &\eta_{k+1}  \mathbb{E}\| g_{k} - \nabla f(x_{k}) \|^2 + \left(\frac{m\alpha_k^2}{L_f\eta_{k+1}}+\frac{6(1+c_1)\alpha_{k}^2}{\delta\rho}\right)\mathbb{E} \| g_{k-1} - \nabla f(x_{k-1}) \|^2 \\
        &\leq \frac{2}{c\eta_k}\mathbb{E}\|g_k - \nabla f(x_k)\|^2 + \left( \frac{6(1+c_1)}{\rho\delta} + \frac{4m}{L_f\eta_{k-1}} \right) \| g_{k-1} - \nabla f(x_{k-1})\|^2 \\
        &\quad - \frac{2}{c\eta_{k+1}}\mathbb{E}\|g_{k+1} - \nabla f(x_{k+1})\|^2 - \left( \frac{6(1+c_1)}{\rho\delta} + \frac{4m}{L_f\eta_{k}} \right) \mathbb{E}\| g_{k} - \nabla f(x_{k})\|^2 \\
        &\quad +\frac{14L_f^2}{c\eta_{k+1}} \mathbb{E} \| x_{k+1} - x_k\|^2 + \left( \frac{42(1+c_1)L_f^2}{\rho\delta} + \frac{28mL_f}{\eta_k} \right) \mathbb{E}\| x_k - x_{k-1}\|^2 \\
        &\quad + \left( \frac{6\alpha_{k+1}^2}{c\eta_{k+1}}+\frac{18(1+c_1)\alpha_k^2}{\rho\delta}  + \frac{12m\alpha_k^2}{L_f\eta_k} \right) V^2.
    \end{aligned}
    \end{equation}
\end{lemma}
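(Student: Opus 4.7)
The plan is to derive Lemma~\ref{lem: var_bd} as a direct consequence of the general variance-reduction bound in Lemma~\ref{lem: storm_estimator_genform}, specialized to the setting where $G_k \equiv \nabla f$ and $\tilde G_k(x,\xi) \equiv \tilde\nabla f(x,\xi)$ are independent of the iteration index $k$. Because $\tilde G_{k+1} = \tilde G_k$, the cross-term $\mathbb{E}_k\|\tilde G_{k+1}(x_k,\xi_{k+1}) - \tilde G_k(x_k,\xi_{k+1})\|^2$ in that lemma vanishes identically; the unbiasedness and mean-square smoothness hypotheses needed to invoke it are supplied by~\eqref{eq: oracle} and~\eqref{eq: asp_case1}, and $\mathbb{E}\|\tilde\nabla f(x_k,\xi_{k+1}) - \nabla f(x_k)\|^2 \leq V^2$ from the same assumption handles the last term. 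Writing $E_k := \mathbb{E}\|g_k - \nabla f(x_k)\|^2$ for brevity, the remaining terms collapse to the one-step STORM recursion
\[
E_{k+1} \leq (1-\alpha_{k+1})^2 E_k + 7L_f^2\,\mathbb{E}\|x_{k+1}-x_k\|^2 + 3\alpha_{k+1}^2 V^2,
\]
and from this point the rest of the argument is purely algebraic.

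The next step is to move all positive $E$-terms to the left in the target inequality and then apply the STORM recursion twice, each application absorbing one of them. The first application, at index $k+1$ multiplied by $\tfrac{2}{c\eta_{k+1}}$, bounds $\tfrac{2}{c\eta_{k+1}}E_{k+1}$ by $\tfrac{2(1-\alpha_{k+1})^2}{c\eta_{k+1}}E_k + \tfrac{14L_f^2}{c\eta_{k+1}}\mathbb{E}\|x_{k+1}-x_k\|^2 + \tfrac{6\alpha_{k+1}^2}{c\eta_{k+1}}V^2$, and the last two terms match two of the error coefficients on the right-hand side of the lemma exactly. The second application, at index $k$ multiplied by the composite factor $\tfrac{6(1+c_1)}{\rho\delta} + \tfrac{4m}{L_f\eta_k}$, bounds $\bigl(\tfrac{6(1+c_1)}{\rho\delta} + \tfrac{4m}{L_f\eta_k}\bigr)E_k$ by a corresponding multiple of $(1-\alpha_k)^2 E_{k-1}$ plus an $\mathbb{E}\|x_k-x_{k-1}\|^2$ term with coefficient $\tfrac{42(1+c_1)L_f^2}{\rho\delta} + \tfrac{28mL_f}{\eta_k}$ and a $V^2$ term with coefficient $\tfrac{18(1+c_1)\alpha_k^2}{\rho\delta} + \tfrac{12m\alpha_k^2}{L_f\eta_k}$; these also match the lemma exactly. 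Matching all three sets of error coefficients is what motivates the specific multipliers $\tfrac{2}{c\eta_{k+1}}$ and $\tfrac{6(1+c_1)}{\rho\delta} + \tfrac{4m}{L_f\eta_k}$ appearing in the statement.

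What remains is to check that the residual coefficients of $E_k$ and $E_{k-1}$, after absorbing, are nonpositive. Substituting $\alpha_{k+1} = c\eta_{k+1}^2$ and expanding $(1-\alpha_{k+1})^2$, nonpositivity of the $E_k$ residual reduces to the scalar inequality $\tfrac{2}{c}(\eta_{k+1}^{-1} - \eta_k^{-1}) \leq 3\eta_{k+1} - 2c\eta_{k+1}^3$; splitting the $E_{k-1}$ residual into the $\tfrac{6(1+c_1)}{\rho\delta}$- and $\tfrac{4m}{L_f}$-parts and using $(1-\alpha_k)^2 + \alpha_k^2 \leq 1$ for the former, nonpositivity of the $m$-part reduces to $\tfrac{\alpha_k^2}{\eta_{k+1}} \leq \tfrac{4}{\eta_{k-1}} - \tfrac{4(1-\alpha_k)^2}{\eta_k}$. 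Verifying these two inequalities is the main obstacle, since the schedules of $\eta_k$, $\alpha_k$, and $k_0$ must be balanced carefully, but both hold under the choice $\eta_k = \eta/[(k+k_0)^{1/3}\log(k+k_0)]$ once $k_0$ is taken as in~\eqref{eq: params_sec1_app}: indeed $\eta_{k+1}^{-1} - \eta_k^{-1}$ decays like $(k+k_0)^{-2/3}\log(k+k_0)$ while $\eta_{k+1}$ only decays like $(k+k_0)^{-1/3}/\log(k+k_0)$, so the first inequality holds for all $k\geq 0$; similarly $\alpha_k^2/\eta_{k+1} \asymp \eta_k^3$ while the right-hand side of the second inequality is $\asymp \eta_k$, so it too holds. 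The routine bookkeeping is aided by the step-size estimates assembled in Fact~\ref{fact: fact1}, and the lemma follows by combining the bounds from the two STORM applications with the remaining $\eta_{k+1}E_k$ and $\alpha_k^2 E_{k-1}$ terms already on the left-hand side.
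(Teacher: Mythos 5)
Your proposal is correct and follows essentially the same route as the paper: both specialize Lemma~\ref{lem: storm_estimator_genform} with $G_k\equiv\nabla f$ (so the cross-term vanishes) to get the one-step STORM recursion, apply it with the same two multipliers $\tfrac{2}{c\eta_{k+1}}$ and $\tfrac{6(1+c_1)}{\rho\delta}+\tfrac{4m}{L_f\eta_k}$, and absorb the step-size drift via Fact~\ref{fact: fact1} (your two residual inequalities are exactly what \eqref{eq: fact_ineq5}, \eqref{eq: fact_ineq6} and $\alpha_k\le 1$ deliver). The only difference is organizational — you verify nonpositivity of the residual coefficients in one pass where the paper chains three intermediate inequalities — and the resulting error coefficients match the statement exactly.
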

\begin{proof}
We first recall the result from Lemma~\ref{lem: storm_estimator_genform}.
By substituting $(\tilde G_{k}(x,\xi), G_k(x)) = (\tilde \nabla f(x, \xi), \nabla f(x))$ for all $k$, using Lipschitzness of $\tilde \nabla f(x, \xi)$ and the variance bound in \eqref{eq: asp_case1}, and taking total expectation, we have
\begin{equation}\label{eq: trw3}
\mathbb{E} \| g_{k+1} - \nabla f(x_{k+1})\|^2 \leq (1-\alpha_{k+1})^2\mathbb{E} \| g_{k} - \nabla f(x_{k})\|^2 + 7 L_f^2 \mathbb{E} \| x_{k+1} - x_k\|^2 + 3\alpha_{k+1}^2 V^2.
\end{equation}
Since $\alpha_{k+1} \leq 1$ by the assumption of the lemma, we have that $(1-\alpha_{k+1})^2 \leq 1-\alpha_{k+1}$ and therefore
\begin{equation}\label{eq: gfd4}
\alpha_{k+1} \mathbb{E} \| g_k - \nabla f(x_k)\|^2 \leq \mathbb{E} \| g_k - \nabla f(x_k)\|^2 - \mathbb{E} \| g_{k+1} - \nabla f(x_{k+1})\|^2 + 7 L_f^2 \mathbb{E}\| x_{k+1} - x_k\|^2 + 3\alpha_{k+1}^2 V^2.
\end{equation}
After multiplying the last inequality (replacing $k$ by $k-1$) with $\frac{6(1+c_1)}{\rho\delta}$ and using $\alpha_k^2 \leq \alpha_k$, we obtain
\begin{align}
\frac{6(1+c_1)\alpha_{k}^2}{\rho\delta} \mathbb{E} \| g_{k-1} - \nabla f(x_{k-1})\|^2 &\leq \frac{6(1+c_1)}{\rho\delta}\left(\mathbb{E} \| g_{k-1} - \nabla f(x_{k-1})\|^2 - \mathbb{E} \| g_{k} - \nabla f(x_{k})\|^2\right) \notag \\
&\quad+ \frac{42(1+c_1) L_f^2}{\rho\delta} \mathbb{E}\| x_{k} - x_{k-1}\|^2 + \frac{18(1+c_1)\alpha_{k}^2 V^2}{\rho\delta}.\label{eq: sde4}
\end{align}
By using $\alpha_k = c\eta_k^2$, we have from~\eqref{eq: gfd4} that 
\begin{align}
\eta_{k+1} \mathbb{E} \| g_k - \nabla f(x_k)\|^2 &\leq \frac{1}{c\eta_{k+1}}\bigg(\mathbb{E} \| g_k - \nabla f(x_k)\|^2 - \mathbb{E} \| g_{k+1} - \nabla f(x_{k+1})\|^2 \notag \\
&\quad+ 7 L_f^2 \mathbb{E}\| x_{k+1} - x_k\|^2 + 3\alpha_{k+1}^2 V^2\bigg).\label{eq: spo3}
\end{align}
Formula \eqref{eq: fact_ineq5} from  Fact~\ref{fact: fact1} sat that $\frac{1}{\eta_{k+1}} - \frac{1}{\eta_k} \leq \frac{\eta_{k+1}c}{2}$, so we have 
\begin{align*}
    \frac{1}{c\eta_{k+1}}  \| g_{k} -\nabla f(x_{k}\|^2 &= \frac{1}{c\eta_{k}}  \| g_{k} -\nabla f(x_{k}\|^2 + \left( \frac{1}{c\eta_{k+1}} -\frac{1}{c\eta_k}\right) \| g_{k} -\nabla f(x_{k}\|^2\\
    &\leq \frac{1}{c\eta_{k}}  \| g_{k} -\nabla f(x_{k}\|^2 + \frac{\eta_{k+1}}{2} \| g_{k} -\nabla f(x_{k}\|^2,
\end{align*}
which by substituting in~\eqref{eq: spo3} yields
\begin{align*}
    \eta_{k+1} \mathbb{E} \| g_k - \nabla f(x_k)\|^2 &\leq \frac{1}{c\eta_{k}} \mathbb{E} \| g_k - \nabla f(x_k)\|^2 + \frac{\eta_{k+1}}{2} \| g_k - \nabla f(x_k)\|^2 - \frac{1}{c\eta_{k+1}}\mathbb{E} \| g_{k+1} - \nabla f(x_{k+1})\|^2 \notag \\
&\quad+ \frac{7L_f^2}{c\eta_{k+1}} L_f^2 \mathbb{E}\| x_{k+1} - x_k\|^2 + \frac{3\alpha_{k+1}^2 V^2}{c\eta_{k+1}}.
\end{align*}
By moving the second term on the right-hand side to the left, then multiplying both sides by $2$, we obtain
\begin{align}
& \eta_{k+1} \mathbb{E} \| g_k - \nabla f(x_k)\|^2\notag \\
& \leq \frac{2}{c}\left(\frac{1}{\eta_{k}} \mathbb{E} \| g_k - \nabla f(x_k)\|^2 - \frac{1}{\eta_{k+1}}\mathbb{E} \| g_{k+1} - \nabla f(x_{k+1})\|^2\right) + \frac{14L_f^2}{c \eta_{k+1}}\mathbb{E}\| x_{k+1} - x_k\|^2 + \frac{6\alpha_{k+1}^2 V^2}{c \eta_{k+1}}.\label{eq: sde5}
\end{align}
By replacing $k$ by $k-1$ and multiplying both sides by $\frac{2mc}{L_f}$, we get
\begin{align*}
& \frac{2mc\eta_{k}}{L_f} \mathbb{E} \| g_{k-1} - \nabla f(x_{k-1})\|^2 \notag \\
& \leq \frac{4m}{L_f} \left( \frac{1}{\eta_{k-1}}\mathbb{E}\|g_{k-1} - \nabla f(x_{k-1})\|^2 - \frac{1}{\eta_k} \mathbb{E}\| g_k - \nabla f(x_k) \|^2 \right) + \frac{28mL_f}{\eta_{k}} \mathbb{E}\| x_k - x_{k-1} \|^2 + \frac{12m\alpha_k^2 V^2}{L_f\eta_{k}}.
\end{align*}
By using $\alpha_{k} \leq 1$ (thus $\alpha_k^2 \le \alpha_k$), $\alpha_k = c \eta_k^2$, and $\eta_k \le 2 \eta_{k+1}$ from \eqref{eq: fact_ineq6} in Fact~\ref{fact: fact1}, we have 
\[
\frac{m\alpha_k^2}{L_f\eta_{k+1}} \leq \frac{m\alpha_k}{L_f\eta_{k+1}} = \frac{mc\eta_k^2}{L_f\eta_{k+1}} \leq \frac{2mc \eta_k}{L_f},
\]
and so by replacing the left-hand side in the previous inequality, we obtain
\begin{align*}
    & \frac{m \alpha_k^2}{L_f \eta_{k+1}} \mathbb{E}\| g_{k-1} - \nabla f(x_{k-1}) \|^2 \notag \\
    &\leq \frac{4m}{L_f} \left( \frac{1}{\eta_{k-1}}\mathbb{E}\|g_{k-1} - \nabla f(x_{k-1})\|^2 - \frac{1}{\eta_k} \mathbb{E}\| g_k - \nabla f(x_k) \|^2 \right)+ \frac{28mL_f}{\eta_{k}} \mathbb{E}\| x_k - x_{k-1} \|^2 + \frac{12m\alpha_k^2 V^2}{L_f\eta_{k}}.
\end{align*}
The result follows by combining this bound with~\eqref{eq: sde4} and~\eqref{eq: sde5}.
\end{proof}

We use the following fact to simplify the coefficients appearing in the analysis (before and after this point). 
We do not try to optimize the constants in this result or other parts of the paper since our focus is on the $\varepsilon$-dependence in our bounds. 
(The constants in this lemma are certainly improvable, since, for simplicity, we make use of  loose inequalities to compare terms of the order $\log (x+1)$ and $x^{\alpha}$ for $\alpha > 1/3$.)
\begin{fact}\label{fact: fact1}
    Given $\eta_k = \frac{\eta}{(k+k_0)^{1/3}\log(k+k_0)}$,  it holds that
    \begin{subequations}\label{eq: fact_ineq}
    \begin{align}
        \frac{1}{2\rho}\left( \frac{1}{\eta_{k+2}} - \frac{1}{\eta_{k+1}} \right) & \leq \frac{c_1}{\rho m},\label{eq: fact_ineq1}\\
        \frac{\eta_{k+1}^{-2}-\eta_k^{-2}}{2} & \leq \frac{c_2 L_f}{\eta_{k+1}},\label{eq: fact_ineq2}\\
        \frac{ \eta_{k+1}^{-1}-\eta_k^{-1}}{2\eta_{k+1}} & \leq \frac{c_3 L_f}{\eta_{k+1}},\label{eq: fact_ineq3}\\
        3(\eta_{k}^{-1} - \eta_{k+1}^{-1})^2 & \leq c_4 L_f^2\label{eq: fact_ineq4}\\
        \frac{1}{\eta_{k+1}} - \frac{1}{\eta_k} &\leq \frac{\eta_{k+1} c}{2}\label{eq: fact_ineq5} \\ 
        \eta_{k+1} & \leq 2\eta_{k+2}, \label{eq: fact_ineq6}
    \end{align}
    \end{subequations}
     where $k_0 = \max\left( \left(\frac{10m}{3c_1\eta}\right)^2, \left(\frac{20}{3\eta c_2L_f}\right)^2 , \left(\frac{10}{3c_3L_f}\right)^2, \frac{400}{3\eta^2 c_4 L_f^2}, \left(\frac{20}{c\eta^2}\right)^4, \left(\frac{50}{3c\eta^2} \right)^6, 2 \right)$, for any absolute constants $c_1, c_2, c_3, c_4$ and any positive values of  $m, \eta, c$.
\end{fact}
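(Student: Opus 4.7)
The plan is to verify each of the six inequalities \eqref{eq: fact_ineq1}--\eqref{eq: fact_ineq6} by reducing it to an elementary estimate on the function $F(t) := t^{1/3}\log(t)/\eta$, for which $\eta_k^{-1} = F(k+k_0)$. The function $F$ is differentiable on $[2,\infty)$ with $F'(t) = (\log t+3)/(3\eta\, t^{2/3})$, and the only analytic ingredient I will use is that for $t$ above some universal threshold one has $\log t + 3 \le 2 t^{1/6}$ and $(\log t)^2 \le t^{1/6}$. Everything else is bookkeeping involving the minimum value of $k_0$.

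First I will establish the ``master increment bound'' $F(k+1) - F(k) \le 2/(3\eta (k+k_0)^{1/2})$: by the mean value theorem this difference is at most $F'(\xi)$ for some $\xi \in (k+k_0, k+k_0+1)$, and since $F'$ is eventually decreasing, bounded by $F'(k+k_0)$; the bound $\log(k+k_0)+3 \le 2(k+k_0)^{1/6}$ then gives the claim. With this single estimate in hand, inequalities \eqref{eq: fact_ineq1}, \eqref{eq: fact_ineq3}, and \eqref{eq: fact_ineq4} reduce, after cancelling the obvious $\rho$'s and $\eta_{k+1}$'s on both sides, to requirements of the form $F(k+1) - F(k) \le C$ or $(F(k+1) - F(k))^2 \le C$ for explicit $C$ depending on $m, c_i, L_f, \eta$; each is ensured by taking $k_0$ above the square of the reciprocal of the corresponding ratio, which is precisely the content of the terms $(10m/(3c_1\eta))^2$, $(10/(3 c_3 L_f))^2$, and $400/(3\eta^2 c_4 L_f^2)$ in the definition of $k_0$. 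Inequality \eqref{eq: fact_ineq2} is similar: one writes $\eta_{k+1}^{-2} - \eta_k^{-2} = (F(k+1)-F(k))(F(k+1)+F(k)) \le 2 F(k+1)(F(k+1)-F(k))$, so after dividing by $F(k+1) = \eta_{k+1}^{-1}$ the claim becomes $F(k+1) - F(k) \le c_2 L_f$, again handled by the master bound and the term $(20/(3\eta c_2 L_f))^2$.

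Inequality \eqref{eq: fact_ineq5} is the one genuinely different case, requiring the product bound $F(k+1)(F(k+1) - F(k)) \le c/2$. Combining the master increment bound with $F(k+1) \le (k+k_0+1)^{1/3}\log(k+k_0+1)/\eta$ and the estimate $(\log t)^2 \le t^{1/6}$ yields $F(k+1)(F(k+1) - F(k)) = O(1/(\eta^2 (k+k_0)^{1/6}))$, which is below $c/2$ provided $k+k_0 \gtrsim (1/(c\eta^2))^6$; this explains the terms $(20/(c\eta^2))^4$ and $(50/(3c\eta^2))^6$ in the definition of $k_0$ (with slack to cover absolute constants). Finally, \eqref{eq: fact_ineq6} is essentially free: the ratio $\eta_{k+1}/\eta_{k+2}$ factors as $((k+k_0+2)/(k+k_0+1))^{1/3}\cdot \log(k+k_0+2)/\log(k+k_0+1)$, and for $k_0 \ge 2$ the first factor is at most $2^{1/3}$ and the second is at most $1 + 1/(\log 3)$, so the product is below $2$.

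There is no real obstacle beyond careful bookkeeping: each of the six inequalities uses the same one-line mean value argument, but one must match up the appropriate term in the $\max$ defining $k_0$ with the correct inequality, and leave enough slack in the coefficients $10, 20, 50,\dots$ to absorb the absolute constants coming from the sub-polynomial bounds $\log t + 3 \le 2 t^{1/6}$ and $(\log t)^2 \le t^{1/6}$. As the paper remarks, these constants are deliberately not optimized, since only the $\varepsilon$-dependence is at stake.
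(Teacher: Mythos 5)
Your approach is essentially the paper's: both reduce all six inequalities to a bound on the increment $\eta_{k+1}^{-1}-\eta_k^{-1}$ of $t\mapsto t^{1/3}\log t/\eta$, and then trade the logarithm for a small power of $k+k_0$. The paper obtains the increment bound by a discrete product-rule splitting (using $\log(1+1/n)\le 1/n$ and the cube-difference factorization), whereas you use the mean value theorem on $F(t)=t^{1/3}\log t/\eta$ with $F'(t)=(\log t+3)/(3\eta t^{2/3})$; that substitution is legitimate since $F'$ is decreasing on $[1,\infty)$, and your reductions of \eqref{eq: fact_ineq1}--\eqref{eq: fact_ineq4} and \eqref{eq: fact_ineq6} to the master increment bound match the paper's term by term, including the correct pairing of each inequality with its term in the $\max$ defining $k_0$.

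Two repairs are needed. First, several of your numerical constants do not hold as stated: $\log t+3\le 2t^{1/6}$ fails for moderate $t$ (the maximum of $(\log t+3)/t^{1/6}$ is $6e^{-1/2}\approx 3.64$ at $t=e^3$, and the constant $2$ only becomes valid near $t\approx e^{12}$, which $k_0\ge 2$ does not guarantee), and in \eqref{eq: fact_ineq6} the product $2^{1/3}\cdot(1+1/\log 3)\approx 2.4$ exceeds $2$; both are fixed by using a universally valid log bound (constant $4$, say) and the sharper factors $(4/3)^{1/3}$ and $1+1/(3\log 3)$, and the slack in $k_0$ absorbs the change for \eqref{eq: fact_ineq1}--\eqref{eq: fact_ineq4}. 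Second, and more substantively, your treatment of \eqref{eq: fact_ineq5} spends the logarithm budget twice: the master bound has already converted $(\log t+3)/t^{2/3}$ into $t^{-1/2}$, so multiplying by $F(k+1)\lesssim t^{1/3}\log t/\eta$ leaves $\log t/(\eta^2 t^{1/6})$, and a second application of $\log t\lesssim t^{1/12}$ gives only $O(1/(\eta^2 t^{1/12}))$ --- which forces $k_0\gtrsim (c\eta^2)^{-12}$ and is \emph{not} covered by the stated $\max$ when $c\eta^2$ is small. The fix is to avoid the master bound here: estimate $F(k+1)\,F'(t)\le (\log t+3)^2/(3\eta^2 t^{1/3})$ directly and absorb the squared logarithm in one step via $(\log t+3)^2\lesssim t^{1/6}$ (or split into a $t^{1/12}$ and a $t^{1/6}$ piece as the paper does), which produces the $t^{-1/6}$ decay and hence exactly the exponents $4$ and $6$ appearing in the stated $k_0$.
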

\begin{proof}
    For~\eqref{eq: fact_ineq1}, by straightforward computation, we have 
    \begin{align}
        \frac{1}{\eta_{k+2}} - \frac{1}{\eta_{k+1}} &= \frac{(k+k_0+2)^{1/3}\log(k+k_0+2) - (k+k_0+1)^{1/3}\log(k+k_0+1)}{\eta} \notag \\
        &= \frac{\log(k+k_0+2)((k+k_0+2)^{1/3} - (k+k_0+1)^{1/3})}{\eta} \notag \\
        &\quad + \frac{(k+k_0+1)^{1/3}(\log(k+k_0+2)-\log(k+k_0+1)}{\eta}.\label{eq: nhy3}
    \end{align}
    Here, we have 
    \[
    \log(k+k_0+2)-\log(k+k_0+1) = \log\left( 1+\frac{1}{k+k_0+1}\right)\leq \frac{1}{k+k_0+1}
    \]
    and 
    \[
    (k+k_0+2)^{1/3} - (k+k_0+1)^{1/3} = \frac{1}{(k+k_0+2)^{2/3} + (k+k_0+2)(k+k_0+1)+(k+k_0+1)^{2/3}} \leq \frac{1}{3(k+k_0+1)^{2/3}}.
    \]
    With these, \eqref{eq: nhy3} yields
    \[
    \frac{1}{\eta_{k+2}} -\frac{1}{\eta_{k+1}} \leq \frac{\log(k+k_0+2)}{3(k+k_0+1)^{2/3}\eta} + \frac{1}{\eta (k+k_0+1)^{2/3}} \leq \frac{4\log(k+k_0+2)}{3(k+k_0+1)^{2/3}\eta}
    \]
    and hence the desired inequality is implied by $\frac{4\log(k+k_0+2)}{3(k+k_0+1)^{2/3}\eta} \leq \frac{2c_1}{m}$. By using $\log(k+k_0+2) \leq 5(k+k_0+1)^{1/6}$ for $k\geq 1$, the inequality is implied by $\frac{10m}{3c_1\eta} \leq (k+k_0+1)^{1/2}$ which is implied by $k_0 \geq \left(\frac{10m}{3c_1\eta}\right)^2$.

    For~\eqref{eq: fact_ineq2}, using the first bound in the previous paragraph and $\frac{1}{\eta_k} \leq \frac{1}{\eta_{k+1}}$, we have 
    \[
    \frac{1}{\eta_{k+1}^2} - \frac{1}{\eta_k^2} \leq \frac{2}{\eta_{k+1}} \left(\frac{1}{\eta_{k+1}} - \frac{1}{\eta_k}\right)  \leq \frac{4\log(k+k_0+1)}{3\eta(k+k_0)^{2/3}}\frac{2}{\eta_{k+1}},
    \]
    and the inequality we want to prove is implied by $\frac{4\log(k+k_0+1)}{3\eta(k+k_0)^{2/3}} \leq c_2 L_f$. 
    By using $\log(k+k_0+1)\leq 5(k+k_0)^{1/6}$, the assertion is implied by $\frac{20}{3\eta c_2L_f} \leq (k+k_0)^{1/2}$ which is implied by $k_0 \geq \left(\frac{20}{3\eta c_2L_f}\right)^2$.

    It is straightforward to show~\eqref{eq: fact_ineq3},~\eqref{eq: fact_ineq4} by using the same estimates, which we omit for brevity. 

    For~\eqref{eq: fact_ineq5}, as in the beginning of the proof, we have 
    \[
    \frac{1}{\eta_{k+1}} - \frac{1}{\eta_k} \leq \frac{1}{\eta(k+k_0)^{2/3}}\left( 1+\frac{\log(k+k_0+1)}{3}\right),
    \]
    and the desired inequality  is implied by
        \[
        \frac{1}{\eta(k+k_0)^{2/3}}\left( 1+\frac{\log(k+k_0+1)}{3}\right)\leq \frac{c \eta}{2(k+k_0+1)^{1/3}\log(k+k_0+1)},
        \]
    which in turn is implied by 
    \[
    2\log(k+k_0+1) + \frac{2(\log(k+k_0+1))^2}{3} \leq c\eta^2(k+k_0)^{1/3}.
    \]
    By using $\log(k+k_0+1)\leq 5(k+k_0)^{1/12}$, this inequality is implied by 
    \[
    10(k+k_0)^{1/12} + \frac{25(k+k_0)^{1/6}}{3} \leq c\eta^2(k+k_0)^{1/3},
    \]
    which in turn is implied by 
    \[
    10(k+k_0)^{1/12} \leq \frac{c\eta^2(k+k_0)^{1/3}}{2} \;\; \mbox{and} \;\; \frac{25(k+k_0)^{1/6}}{3} \leq  \frac{c\eta^2(k+k_0)^{1/3}}{2}.
    \]
    These bounds hold when $k_0 \geq \max\left(\left(\frac{20}{c\eta^2}\right)^4, \left(\frac{50}{3c\eta^2} \right)^6 \right) = \left(\frac{50}{3c\eta^2}\right)^6$.

    The last assertion $\eta_{k+1} \leq2\eta_{k+2}$ \eqref{eq: fact_ineq6} is implied by $(k+k_0+2)^{1/3} \log(k+k_0+2) \leq 2(k+k_0+1)^{1/3}\log(k+k_0+1)$ which is implied by $(k+k_0+2)^{1/3} \leq \frac{4}{3}(k+k_0+1)^{1/3}$ which holds for any $k_0$ and by $\log(k+k_0+2) \leq \frac{3}{2}\log(k+k_0+1)$ which holds, for example, when $k_0 \geq 2$.
\end{proof}

The following lemma is showing the lower boundedness of the potential function $Y_k$ defined in Lemma~\ref{lem: skl4}, which is important for ensuring that the complexity has the desired dependence on $\varepsilon$.
\begin{lemma}\label{lem: potential_lb}
Under the assumptions and the parameter choices of Theorem~\ref{th: main1} and the definition of $Y_k$ in~\eqref{eq: gjf4}, there exists $\underline y > -\infty$ such that $\mathbb{E}Y_k \geq \underline y$ for any $k$. In particular $\underline y = -2V^2\sum_{k=1}^\infty v_k > -\infty$ since 
\[
v_k = \frac{6(1+c_1)\alpha_{k}^2}{\delta\rho} + \frac{\alpha_k^2 m}{L_f\eta_{k+1}} + \frac{6\alpha_{k+1}^2}{c\eta_{k+1}} + \frac{18(1+c_1)\alpha_k^2}{\rho\delta} + \frac{12m\alpha_k^2}{L_f\eta_k}=O\left( \frac{1}{(k+k_0)(\log(k+k_0))^3}\right),
\]
due to
\[
\eta_k = \frac{\eta}{(k+k_0)^{1/3}\log(k+k_0)}, \quad 
\alpha_k = c\eta_k^2,
\]
for the  constants $\eta$ and $c$ from \eqref{eq: params_sec1_app}.
\end{lemma}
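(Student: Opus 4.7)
The plan is to prove the lower bound by combining the almost-monotone inequality from~\eqref{eq: nuy4}, i.e., $\mathbb{E}Y_{k+1}\le \mathbb{E}Y_k + v_k V^2$, with a sign analysis of the individual summands of $Y_{k+1}$ in~\eqref{eq: gjf4}. I would first verify that every summand appearing in $Y_{k+1}$ is manifestly nonnegative except the linear dual term $\langle \lambda_{k+1}, Ax_{k+1}-b\rangle$ inside $L_\rho(x_{k+1},\lambda_{k+1})$: namely $f(x_{k+1})\ge 0$ by the third part of~\eqref{eq: asp_case1}; $\|x_{k+1}-x_k\|^2_{Q_{k+1}}\ge 0$ because $Q_{k+1}=\eta_{k+1}^{-1}I-\rho A^\top A\succeq 0$ as verified in~\eqref{eq: hee4_2}; and the coefficients $\beta_{1,k+1}$, $\frac{\rho m}{2\eta_{k+2}}$, $\frac{2}{c\eta_{k+1}}$, $\frac{6(1+c_1)}{\rho\delta}+\frac{4m}{L_f\eta_k}$ are all positive under the parameter choices~\eqref{eq: params_sec1_app}.

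To handle the one sign-indefinite term, I would exploit the dual update $\lambda_{k+1}-\lambda_k=\rho(Ax_{k+1}-b)$ and the identity
\[
\langle \lambda_{k+1}, Ax_{k+1}-b\rangle = \frac{1}{2\rho}\bigl(\|\lambda_{k+1}\|^2-\|\lambda_k\|^2\bigr)+\frac{\rho}{2}\|Ax_{k+1}-b\|^2,
\]
which rewrites $L_\rho(x_{k+1},\lambda_{k+1})= f(x_{k+1}) + \rho\|Ax_{k+1}-b\|^2+\frac{1}{2\rho}(\|\lambda_{k+1}\|^2-\|\lambda_k\|^2)$, hence $Y_{k+1}\ge \frac{1}{2\rho}(\|\lambda_{k+1}\|^2-\|\lambda_k\|^2)$. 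Combined with a telescoping in $k$, this reduces the task to an indirect control of $\mathbb{E}\|\lambda_k\|^2$. Because constant $\rho$ precludes any uniform pointwise bound on $\|\lambda_k\|$, I would pair the upper bound $\mathbb{E}Y_k\le \mathbb{E}Y_1+V^2\sum_j v_j$ (itself obtained by telescoping~\eqref{eq: nuy4}) with the increment bound~\eqref{eq: sry3} used in Corollary~\ref{cor: sor4}, which estimates $\mathbb{E}\|\lambda_{k+1}-\lambda_k\|^2$ by precisely the quantities $\eta_{k+1}^{-2}\mathbb{E}\|x_{k+1}-x_k\|^2$, $\eta_k^{-2}\mathbb{E}\|x_k-x_{k-1}\|^2$, $\mathbb{E}\|g_k-\nabla f(x_k)\|^2$, $\mathbb{E}\|g_{k-1}-\nabla f(x_{k-1})\|^2$ whose (weighted) cumulative sums appear directly as negative contributions on the right-hand side of~\eqref{eq: nuy4}. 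Summability of $\{v_k\}$ is immediate by substituting $\eta_k=\eta/((k+k_0)^{1/3}\log(k+k_0))$ and $\alpha_k=c\eta_k^2$ into the formula for $v_k$ from Lemma~\ref{lem: skl4}, giving $v_k=O((k+k_0)^{-1}(\log(k+k_0))^{-3})$.

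The main obstacle is closing this circular-looking loop: the almost-monotone upper bound on $\mathbb{E}Y_k$ feeds a bound on the dual increments through~\eqref{eq: sry3}, and the summability of those increments in turn limits how negative $L_\rho(x_k,\lambda_k)$ can become, yielding finally the uniform finite lower bound $\underline y=-2V^2\sum_{k=1}^\infty v_k$ after absorbing initial-condition constants. The subtlety comes entirely from the use of a constant dual step size $\rho$, which distinguishes our setting from~\cite{hong2016decomposing}, where exact monotonicity of $Y_k$ makes the analogous lower-boundedness step immediate.
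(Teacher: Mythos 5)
You have the right ingredients --- nonnegativity of all terms of $Y_k$ except the linear dual term, the identity $\langle \lambda_{k+1}, Ax_{k+1}-b\rangle = \frac{1}{2\rho}\bigl(\|\lambda_{k+1}\|^2-\|\lambda_k\|^2+\|\lambda_{k+1}-\lambda_k\|^2\bigr)$, the almost-monotonicity $\mathbb{E}Y_{k+1}\le\mathbb{E}Y_k+v_kV^2$, and the summability of $v_k$ --- but the argument as proposed does not close. First, the step "this reduces the task to an indirect control of $\mathbb{E}\|\lambda_k\|^2$" misreads what the telescoping buys you: summing the identity over $k=1,\dots,K$ gives $\sum_{k=1}^K\mathbb{E}L_\rho(x_k,\lambda_k)\ge \frac{1}{2\rho}(\mathbb{E}\|\lambda_K\|^2-\|\lambda_0\|^2)\ge-\frac{1}{2\rho}\|\lambda_0\|^2$ directly, with no control of $\|\lambda_k\|$ needed, because $\|\lambda_K\|^2\ge 0$. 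What you obtain this way is a uniform lower bound on the \emph{partial sums} $\sum_{k\le K}\mathbb{E}Y_k$, not on the individual terms $\mathbb{E}Y_K$, and the missing idea is precisely how to pass from the former to the latter. The paper does this with a short contradiction argument: let $C=V^2\sum_k v_k<\infty$; if some $\mathbb{E}Y_{k_1}<-2C$, then almost-monotonicity forces $\mathbb{E}Y_k<-C$ for every $k\ge k_1$ (the total subsequent increase is at most $C$), so the partial sums diverge to $-\infty$, contradicting their uniform lower bound. That three-case analysis is the entire content of the lemma beyond the bookkeeping, and it is absent from your proposal.

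Second, the route you do sketch for closing the loop --- feeding the increment bound \eqref{eq: sry3} on $\mathbb{E}\|\lambda_{k+1}-\lambda_k\|^2$ back into a bound on $\mathbb{E}\|\lambda_k\|^2$ --- is genuinely circular and cannot be repaired in that form. The summability of the weighted quantities $\eta_{k}^{-2}\mathbb{E}\|x_{k}-x_{k-1}\|^2$ and $\mathbb{E}\|g_{k-1}-\nabla f(x_{k-1})\|^2$ is established in Theorem~\ref{th: main1} only \emph{after} invoking $-\mathbb{E}Y_{K+1}=O(1)$, i.e., the very lower bound you are trying to prove; and even granting it, convergence of $\sum_j\mathbb{E}\|\lambda_{j+1}-\lambda_j\|^2$ does not control $\mathbb{E}\|\lambda_k\|^2$ (one would need the unsquared increments to be summable). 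You correctly flag this as "circular-looking," but acknowledging the circularity is not the same as resolving it; the resolution is to abandon any attempt to bound $\|\lambda_k\|$ and instead run the partial-sum/contradiction argument above. The summability computation for $v_k$ in your last step is fine.
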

\begin{proof}
    Note that all the terms are nonnegative in the definition of $Y_k$ in Lemma \ref{lem: skl4}) with the exception of $L_\rho(x_k, \lambda_k)$. We therefore focus on the latter term.
    Our  argument extends that of~\cite[Lemma 3.5]{hong2016decomposing}, the important difference being that, due to the stochasticity of our setting, we do not have non-increasing potential, which is critical in the argument. 
    We show that our time-varying choices of step sizes still allow us to establish lower boundedness. 

    First, by using $f(x) \geq \underline f \geq 0$ and the definition of $\lambda_{k+1}$, which implies
    \[
    \langle \lambda_{k+1}, Ax_{k+1} - b\rangle = \rho^{-1}\langle \lambda_{k+1}, \lambda_{k+1} - \lambda_k \rangle = \frac{1}{2\rho}\left( \| \lambda_{k+1}\|^2 - \| \lambda_k\|^2 + \|\lambda_{k+1} -\lambda_k\|^2 \right),
    \]
 we can show that the following holds for any $K$:
    \begin{align*}
        \sum_{k=0}^K \mathbb{E} L_{\rho}(x_{k+1}, \lambda_{k+1}) &= \sum_{k=0}^K \mathbb{E} \left[f(x_{k+1}) + \langle \lambda_{k+1}, Ax_{k+1} - b \rangle + \frac{\rho}{2} \| Ax_{k+1} - b\|^2 \right] \\
        &\geq \frac{1}{\rho}\left( \mathbb{E}\| \lambda_{K+1} \|^2 - \| \lambda_0\|^2 \right) \geq -\frac{1}{\rho}\|\lambda_0\|^2.
    \end{align*}
    It follows that
    \begin{equation}\label{eq: y1_def}
        \sum_{k=1}^\infty \mathbb{E} L_\rho(x_k, \lambda_k) > -\infty \quad\Longrightarrow\quad \sum_{k=1}^\infty \mathbb{E} Y_k \geq \underline y_1 > -\infty,
    \end{equation}
    for some $\underline y_1$.
    
    Next, we use the bound \eqref{eq: gjt8} which states that 
    \begin{equation}\label{eq: sob4}
        \mathbb{E}Y_{k+1} \leq \mathbb{E}Y_k + v_k V^2,
    \end{equation}
    where  $v_k$ from  Lemma \ref{lem: skl4} is redefined in the statement of this lemma. 
    For the $O()$ estimate $v_k = O\left( \frac{1}{(k+k_0)(\log(k+k_0))^3} \right)$, we have used
$\eta_k = \frac{\eta}{(k+k_0)^{1/3}\log(k+k_0)}$,
$\alpha_k = c\eta_k^2$ for constants $\eta, c$ given in \eqref{eq: params_sec1_app}, together with
    with $k_0 \geq 2$. 
        We define $C:=V^2 \sum_{k=1}^\infty v_k$ which is finite due to the definition of $v_k$ with $k_0 \geq 2$.
    
    We consider three cases.
    \begin{enumerate}
        \item When $\mathbb{E}Y_k\geq 0$ for all $k$,  the assertion follows immediately.
        \item When  $\mathbb{E} Y_{k_2} < 0$ for some $k_2$ and $\mathbb{E} Y_{k} \geq -2C$ for all $k \ge k_2$, the assertion also follows.
        \item When there exists an index $k_1$ such that $\mathbb{E} Y_{k_1} < -2C$, assume without loss of generality that $k_1$ is the smallest index that satisfies this property. 
    By the definition of this case, $\mathbb{E}Y_k \geq -2C$ for $k<k_1$.
    Since $V^2 \sum_{k=1}^\infty v_k = C$, we have from \eqref{eq: sob4} that $\mathbb{E} Y_{k} < -C$ for all $k \ge k_1$. 
    However, this would cause a contradiction with \eqref{eq: y1_def}.
    \end{enumerate}
     This concludes the proof.
\end{proof}

\section{Stochastic Constraints: Problem~\eqref{eq:III}}\label{sec: qp_stoc_const_app}

\subsection{Variance control}
We start with a result for the variance of the estimator $g_k$. This result is essentially a corollary of Lemma~\ref{lem: storm_estimator_genform}. We characterize the precise constants for the bound of this lemma which are important for getting the order of complexity. For ease of reference, let us recall here the relevant quantities from \eqref{eq: htr_eqs}, \eqref{eq: asp_gen2}, \eqref{eq: asp_gen3}:
\begin{align*}
    \mathbb{E} \| \tilde \nabla f(x, \xi) - \tilde\nabla f(y, \xi) \|^2 &\leq \tilde L_{\nabla f}^2 \| x-y\|^2, \\ 
    \mathbb{E} \| \tilde \nabla c_i(x, \zeta) - \tilde\nabla c_i(y, \zeta) \|^2 &\leq \tilde L_{\nabla c}^2 \| x-y\|^2, \\
    \mathbb{E} | \tilde c_i(x, \zeta) - \tilde c_i(y, \zeta) \|^2 &\leq \tilde L_{c}^2 \| x-y\|^2,\\
    \mathbb{E}\| \tilde \nabla f(x,\xi) - \nabla f(x)\|^2 &\leq \sigma_f^2,\\
    \mathbb{E}\| \tilde \nabla c_i(x,\zeta) - \nabla c_i(x)\|^2 &\leq \sigma_{\nabla c}^2,\\
    \mathbb{E}\| \tilde c_i(x,\zeta) - c_i(x)\|^2 &\leq \sigma_c^2, \\
    \|\nabla c_i(x)\| &\leq C_{\nabla c} \\
    |c_i(x)| &\leq C_{c},\\
    \|\tilde \nabla c_i(x, \zeta)\| &\leq \tilde C_{\nabla c}, \\
    |\tilde c_i(x, \zeta)| &\leq \tilde C_{c},
\end{align*}
for any $i\in \{1,\dots,m\}$.
\begin{lemma}\label{cor: storm_qp}
Let the assumptions in~\eqref{eq: htr_eqs}, \eqref{eq: asp_gen2}, \eqref{eq: asp_gen3} hold and let the parameters of Algorithm~\ref{algorithm:stoc_lalm} be given as
\[
\eta_k = \frac{1}{9\tilde L \rho(k+1)^{3/5}}, \quad
\rho_k = \rho k^{1/5}, \quad \mbox{and} \;\; \alpha_{k+1} = 72 \tilde L^2 \rho_{k+1}^2 \eta_k^2 = \frac{72}{81} (k+1)^{-4/5},
\]
for some constant $\rho>1$, $\tilde L^2 = 4\tilde L_{\nabla f}^2 + 4m^2(\tilde C_c^2\tilde L_{\nabla c}^2 + \tilde C_{\nabla c}^2 \tilde L_c^2)$,
we have
\begin{align*}
   \eta_k\mathbb{E}\|g_k - \nabla Q_{\rho_{k}}(x_{k})\|^2 &\leq \frac{1}{72\tilde L^2 \rho_{k}^2 \eta_{k-1}} \mathbb{E}\|g_k - \nabla Q_{\rho_{k}}(x_{k})\|^2 - \frac{1}{72\tilde L^2 \rho_{k+1}^2 \eta_k}\mathbb{E} \|g_{k+1} - \nabla Q_{\rho_{k+1}}(x_{k+1})\|^2 \notag\\
    &\quad+\frac{7}{18\eta_k} \mathbb{E}\| x_{k+1} - x_k\|^2 
    + \frac{7 m^2 \tilde C_{\nabla c}^2 \tilde C_c^2}{12\tilde L^2 \rho_{k+1}^2 \eta_k} |\rho_{k+1} - \rho_k|^2 \\
    &\quad+\frac{\alpha_{k+1}^2}{12\tilde L^2 \rho_{k+1}^2 \eta_k} \left(\sigma_f^2 + 2m^2\rho_k^2\left(C_c^2 \sigma^2_{\nabla c} + \tilde C_{\nabla c}^2 \sigma_c^2 \right)\right).
\end{align*}
\end{lemma}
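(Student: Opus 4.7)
My plan is to specialize Lemma~\ref{lem: storm_estimator_genform} to the STORM estimator at hand by setting $G_k(x) = \nabla Q_{\rho_k}(x)$ and $\tilde G_k(x, B) = \tilde\nabla Q_{\rho_k}(x, B)$, bound each of the three error terms it produces, and then rearrange to obtain the claimed telescoping form. The unbiasedness hypothesis required by Lemma~\ref{lem: storm_estimator_genform} was verified in Section~\ref{subsec: alg_stoc_qp} using the independence of $\zeta^{1}$ and $\zeta^{2}$. A subtlety is that because the ``target'' $\nabla Q_{\rho_k}$ itself depends on $k$ through $\rho_k$, the term comparing $\tilde G_{k+1}(x_k, \cdot)$ with $\tilde G_k(x_k, \cdot)$ no longer vanishes; it will produce the $|\rho_{k+1} - \rho_k|^2$ contribution appearing in the final statement.

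For the three error terms I would argue as follows. The $x$-shift term $\mathbb{E}\|\tilde\nabla Q_{\rho_{k+1}}(x_{k+1}, B) - \tilde\nabla Q_{\rho_{k+1}}(x_k, B)\|^2$ is at most $\tilde L^2 \rho_{k+1}^2 \|x_{k+1} - x_k\|^2$ by the stochastic smoothness \eqref{eq: stoc_lips}. The $\rho$-shift term $\mathbb{E}\|\tilde\nabla Q_{\rho_{k+1}}(x_k, B) - \tilde\nabla Q_{\rho_k}(x_k, B)\|^2$ reduces, via the explicit form \eqref{eq: sor1}, to $|\rho_{k+1} - \rho_k|^2\,\mathbb{E}\|\sum_i \tilde\nabla c_i(x_k, \zeta^{1})\tilde c_i(x_k, \zeta^{2})\|^2$, which by Cauchy--Schwarz across $i$ and the uniform bounds $\|\tilde\nabla c_i\| \leq \tilde C_{\nabla c}$, $|\tilde c_i| \leq \tilde C_c$ from \eqref{eq: asp_gen3} is at most $m^2 \tilde C_{\nabla c}^2 \tilde C_c^2 |\rho_{k+1} - \rho_k|^2$. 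The pointwise variance $\mathbb{E}\|\tilde\nabla Q_{\rho_k}(x_k, B) - \nabla Q_{\rho_k}(x_k)\|^2$ splits additively by independence of $\xi^{0}$ from $(\zeta^{1}, \zeta^{2})$ into a $\sigma_f^2$ contribution plus a $\rho_k^2$-scaled sum; for each $i$, I write $\tilde\nabla c_i \tilde c_i - \nabla c_i c_i = (\tilde\nabla c_i - \nabla c_i)\tilde c_i + \nabla c_i(\tilde c_i - c_i)$, factor expectations using independence of $\zeta^{1}$ and $\zeta^{2}$, and bound $|c_i| \leq C_c$ and $\|\nabla c_i\| \leq \tilde C_{\nabla c}$ via \eqref{eq: asp_gen3} to reach $2m^2\rho_k^2(C_c^2 \sigma_{\nabla c}^2 + \tilde C_{\nabla c}^2 \sigma_c^2)$ after a further Cauchy--Schwarz.

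With these estimates substituted into Lemma~\ref{lem: storm_estimator_genform}, I use $(1-\alpha_{k+1})^2 \leq 1 - \alpha_{k+1}$ (valid since $\alpha_{k+1} \leq 1$ for the stated parameter choices) to move $\alpha_{k+1}\mathbb{E}\|g_k - \nabla Q_{\rho_k}(x_k)\|^2$ to the left-hand side. Dividing both sides by $\alpha_{k+1} = 72\tilde L^2 \rho_{k+1}^2 \eta_k^2$ and multiplying by $\eta_k$ then yields $\eta_k\mathbb{E}\|g_k - \nabla Q_{\rho_k}(x_k)\|^2$ on the left and the desired potential differences plus scaled errors on the right. The key algebraic cancellation is $\tilde L^2 \rho_{k+1}^2/\alpha_{k+1} = 1/(72\eta_k^2)$, which strips $\rho_{k+1}$ out of the coefficient of $\|x_{k+1} - x_k\|^2$ to leave an $O(\eta_k^{-1})$ factor suitable for the downstream iterate-difference analysis in Theorem~\ref{eq: asb4}; the index pairing $(\rho_k^2\eta_{k-1}, \rho_{k+1}^2\eta_k)$ on the two $\|g - \nabla Q\|^2$ coefficients is exactly the one that telescopes when summed over $k$.

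The main obstacle I anticipate is the clean handling of the $\rho$-shift term, which is absent in the usual constant-penalty STORM analysis; after scaling it contributes $|\rho_{k+1} - \rho_k|^2/(\rho_{k+1}^2 \eta_k)$, and one must verify this is summable. With $\rho_k = \rho k^{1/5}$ one has $|\rho_{k+1} - \rho_k| = O(k^{-4/5})$ and $\rho_{k+1}^2 \eta_k = O(k^{-1/5})$, yielding the $O(k^{-7/5})$ decay invoked in Remark~\ref{rem: cons1}. A secondary bookkeeping challenge is tracking numerical constants so that after division by $\alpha_{k+1}$ and multiplication by $\eta_k$ the coefficients align with the factors stated in the lemma; the specific choice $\alpha_{k+1} = 72\tilde L^2 \rho_{k+1}^2 \eta_k^2$ is what makes the $\|x_{k+1} - x_k\|^2$ coefficient independent of $\tilde L$ and $\rho_{k+1}$ after the scaling.
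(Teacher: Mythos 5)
Your overall strategy is the same as the paper's: specialize Lemma~\ref{lem: storm_estimator_genform} with $G_k=\nabla Q_{\rho_k}$, bound the $x$-shift, $\rho$-shift, and variance terms exactly as you describe (your estimates of these three terms match the paper's, up to harmless constants), and then rescale by $\frac{1}{72\tilde L^2\rho_{k+1}^2\eta_k}$. However, there is a genuine gap in the final rearrangement. After using $(1-\alpha_{k+1})^2\le 1-\alpha_{k+1}$, moving $\alpha_{k+1}\mathbb{E}\|g_k-\nabla Q_{\rho_k}(x_k)\|^2$ to the left, and multiplying by $\eta_k/\alpha_{k+1}=\frac{1}{72\tilde L^2\rho_{k+1}^2\eta_k}$, the retained positive term on the right-hand side carries the coefficient $\frac{1}{72\tilde L^2\rho_{k+1}^2\eta_k}$, not the claimed $\frac{1}{72\tilde L^2\rho_{k}^2\eta_{k-1}}$. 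These differ in the wrong direction: since $\rho_{k+1}^2\eta_k=\frac{\rho}{9\tilde L (k+1)^{1/5}}$ is \emph{decreasing} in $k$, we have $\frac{1}{72\tilde L^2\rho_{k+1}^2\eta_k}>\frac{1}{72\tilde L^2\rho_{k}^2\eta_{k-1}}$, so you cannot simply replace one by the other, and without this replacement the resulting recursion does not telescope when summed (the positive weight at step $k+1$ exceeds the negative weight at step $k$).

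The fix, which is the one nontrivial point of this lemma, is to not discard the full cross term: write $(1-\alpha_{k+1})^2 = 1-\alpha_{k+1}-\alpha_{k+1}(1-\alpha_{k+1})$, use the $-\alpha_{k+1}$ piece to generate the left-hand side $\eta_k\mathbb{E}\|g_k-\nabla Q_{\rho_k}(x_k)\|^2$ (via $\alpha_{k+1}/(72\tilde L^2\rho_{k+1}^2\eta_k)=\eta_k$), and use the leftover $-\alpha_{k+1}(1-\alpha_{k+1})$ piece to absorb the growth of the weight. Concretely one must verify
\begin{equation*}
\frac{1}{\rho_{k+1}^2\eta_k}-\frac{1}{\rho_k^2\eta_{k-1}} \;\le\; \frac{\alpha_{k+1}(1-\alpha_{k+1})}{\rho_{k+1}^2\eta_k}
\;\iff\;
(k+1)^{1/5}-k^{1/5} \;\le\; \alpha_{k+1}(1-\alpha_{k+1})\,(k+1)^{1/5},
\end{equation*}
which holds for $k\ge 1$ because $(k+1)^{1/5}-k^{1/5}\le \frac{1}{5k^{4/5}}$ and $1-\alpha_{k+1}\ge \frac{4}{10}$; this is exactly the paper's step \eqref{eq: gwq3}--\eqref{eq: hrk3}. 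Your proposal correctly identifies the target index pairing $(\rho_k^2\eta_{k-1},\rho_{k+1}^2\eta_k)$ but supplies no argument for why the division-by-$\alpha_{k+1}$ bound implies it, and the naive inequality needed there is false. Everything else in your outline (unbiasedness via independence of $\zeta^1,\zeta^2$, the $|\rho_{k+1}-\rho_k|^2$ term and its $O(k^{-7/5})$ summability, the variance split) is sound and mirrors the paper.
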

\begin{proof}
We apply Lemma~\ref{lem: storm_estimator_genform} with
\begin{subequations}\label{eq: ioe_eqs}
\begin{align}
G_{k}(x_k) &= \nabla Q_{\rho_k}(x_k) = \nabla f(x_k) + \rho_k \sum_{i=1}^m \nabla c_i(x_k) c_i(x_k) , \label{eq: ioe3}\\
G_{k+1}(x_{k+1}) &= \nabla Q_{\rho_{k+1}}(x_{k+1}) = \nabla f(x_{k+1}) + \rho_{k+1} \sum_{i=1}^m \nabla c_i(x_{k+1}) c_i(x_{k+1}) , \label{eq: ioe3.5}\\
\tilde G_{k}(x_k, B_{k+1}) & = \tilde\nabla Q_{\rho_k}(x_k,B_{k+1}) = \tilde \nabla f(x_k, \xi^0_{k+1}) + \rho_k \sum_{i=1}^m \tilde \nabla c_i(x_k, \zeta^{1}_{k+1}) \tilde c_i(x_k, \zeta^{2}_{k+1}),\label{eq: ioe4} \\
\tilde G_{k+1}(x_{k+1}, B_{k+1}) & = \tilde\nabla Q_{\rho_{k+1}}(x_{k+1},B_{k+1}) = \tilde \nabla f(x_{k+1}, \xi^0_{k+1}) + \rho_{k+1} \sum_{i=1}^m \tilde \nabla c_i(x_{k+1}, \zeta^{1}_{k+1}) \tilde c_i(x_{k+1}, \zeta^{2}_{k+1}).\label{eq: ioe4}
\end{align}
\end{subequations}
We note that $G_{k}(x_k)=\E_{B_{k+1}} \tilde G_k(x_k, B_{k+1})$ and $G_{k+1}(x_{k+1})=\E_{B_{k+1}} \tilde G_{k+1}(x_{k+1}, B_{k+1})$, as required by Lemma \ref{lem: storm_estimator_genform}. This estimation is due to $B_{k+1}$ being sampled after the computation of $x_{k+1}$, by the independence of $\zeta^{1}$ and $\zeta^{2}$ and by $\rho_k$ being a deterministic sequence.
We now estimate the error terms on the right-hand side of Lemma~\ref{lem: storm_estimator_genform}. 
Recall that $\mathbb{E}_k$ the expectation conditioning on all the history up to and including $x_{k+1}$.

For lighter notation, we drop the subscripts from the random variables $(\xi^0, \zeta^{1}, \zeta^{2})$ that define $B_{k+1}=(\xi^0_{k+1}, \zeta^{1}_{k+1}, \zeta^{2}_{k+1})$ in this lemma.

First, we estimate the second term on the right-hand side of the inequality in Lemma~\ref{lem: storm_estimator_genform}. 
By Young's inequality, we have
\begin{align}
&\mathbb{E}_k\| \tilde G_{k+1}(x_{k+1}, B_{k+1}) - \tilde G_{k+1}(x_k, B_{k+1}) \|^2 \notag \\
&\le 2\mathbb{E}_k \| \tilde \nabla f(x_{k+1}, \xi^0) - \tilde \nabla f(x_{k}, \xi^0) \|^2\notag \\
&\quad + 2\rho_{k+1}^2 \mathbb{E}_k\left\|  \sum_{i=1}^m \left(\tilde \nabla c_i(x_{k+1}, \zeta^{1}) \tilde c_i(x_{k+1}, \zeta^{2}) - \tilde \nabla c_i(x_{k}, \zeta^{1}) \tilde c_i(x_{k}, \zeta^{2})\right)\right\|^2.\label{eq: vpt4}
\end{align}
By using~\eqref{eq: htr_eqs} and~\eqref{eq: asp_gen3}, we have that
\begin{align}
&\mathbb{E}_k\Big\|  \sum_{i=1}^m \Big(\tilde\nabla  c_i(x_{k+1}, \zeta^{1}) \tilde c_i(x_{k+1}, \zeta^{2}) - \tilde\nabla c_i(x_{k}, \zeta^{1}) \tilde  c_i(x_{k}, \zeta^{2})\Big)\Big\|^2 \notag \\
& \leq m\sum_{i=1}^m \mathbb{E}_k\left\| \tilde\nabla c_i(x_{k+1}, \zeta^{1}) \tilde  c_i(x_{k+1}, \zeta^{2}) - \tilde\nabla c_i(x_{k}, \zeta^{1}) \tilde  c_i(x_{k}, \zeta^{2}) \right\|^2 \notag\\
& \leq 2m\sum_{i=1}^m \mathbb{E}_k\left\| \tilde\nabla c_i(x_{k+1}, \zeta^{ 1})\left(\tilde  c_i(x_{k+1}, \zeta^{2}) - \tilde c_i(x_{k}, \zeta^{2})\right) \right\|^2 \notag\\
&\quad+2m\sum_{i=1}^m \mathbb{E}_k\left\| \left(\tilde\nabla c_i(x_{k+1}, \zeta^{1})  - \tilde\nabla c_i(x_{k}, \zeta^{1})\right) \tilde c_i(x_{k}, \zeta^{2}) \right\|^2\notag \\
& \le 2m^2\left( \tilde C_c^2\tilde L_{\nabla c}^2 + \tilde C_{\nabla c}^2 \tilde L_{c}^2\right) \| x_{k+1} - x_k\|^2,\label{eq: uew3}
\end{align}
where the first inequality follows from the fact that for vectors $Y_i$, we have $\| \sum_{i=1}^m Y_i \|^2 \le (\sum_{i=1}^m \|Y_i\|)^2 \le m \sum_{i=1}^m \| Y_i \|^2 $ and the second by Young's inequality.
Using this in~\eqref{eq: vpt4} along with the first line in~\eqref{eq: htr_eqs}, we have
\begin{align}
\mathbb{E}_k\| \tilde G_{k+1}(x_{k+1}, B_{k+1}) - \tilde G_{k+1}(x_k, B_{k+1}) \|^2 &= \left( 2\tilde L_{\nabla f}^2 + 4\rho_{k+1}^2m^2\left( \tilde C_c^2\tilde L_{\nabla c}^2 + \tilde C_{\nabla c}^2 \tilde L_{c}^2\right) \right) \| x_{k+1} - x_k\|^2 \notag \\
&\le 4 \tilde{L}^2 \rho_{k+1}^2 \| x_{k+1} - x_k\|^2 
\label{eq: ngj4}
\end{align}
where the last line is by the definition of $\tilde L$, see also~\eqref{eq: stoc_lips}.

Second, we estimate the third term on the right-hand side in Lemma~\ref{lem: storm_estimator_genform}:
\begin{align}
\mathbb{E}_k\| \tilde G_{k+1}(x_k, B_{k+1}) - \tilde G_{k}(x_k, B_{k+1})\|^2 &= \mathbb{E}_k \left\| (\rho_{k+1} - \rho_k) \sum_{i=1}^m \tilde\nabla c_i(x_k, \zeta^{1}) \tilde c_i(x_k, \zeta^{2}) \right\|^2 \notag \\
&\leq m^2 \tilde C_{\nabla c}^2 \tilde C_c^2 |\rho_{k+1} - \rho_k|^2.\label{eq: ngj5}
\end{align}
Third, we estimate the fourth term on the right-hand side in Lemma~\ref{lem: storm_estimator_genform}. From Young's inequality, we have
\begin{align}
& \mathbb{E} \| G_{k}(x_k) - \tilde G_{k}(x_k, B_{k+1}) \|^2 \notag \\
& \le 2 \E \left\| \nabla f(x_k) - \tilde \nabla f(x_k, \xi^0) \right\|^2 
+ 2 \rho_k^2 \E \left\| \sum_{i=1}^m \nabla c_i(x_k) c_i(x_k) - \sum_{i=1}^m \tilde\nabla c_i(x_k, \zeta^{1}) \tilde c_i(x_k, \zeta^{2}) \right\|^2 \notag \\
&\leq 2\sigma_f^2 + 4m^2\rho_k^2\left(C_c^2 \sigma^2_{\nabla c} + \tilde C_{\nabla c}^2 \sigma_c^2 \right),\label{eq: ngj6}
\end{align}
where the estimations for the last inequality are similar to~\eqref{eq: uew3}.

By substituting the bounds \eqref{eq: ngj4}, \eqref{eq: ngj5}, \eqref{eq: ngj6} into \eqref{lem: storm_estimator_genform} and also using \eqref{eq: ioe_eqs}, we obtain 
\begin{align}
   \mathbb{E}_k \|g_{k+1} - \nabla Q_{\rho_{k+1}}(x_{k+1})\|^2 &\leq (1-\alpha_{k+1})^2\|g_k - \nabla Q_{\rho_{k}}(x_{k})\|^2 \notag\\
    &\quad+28 \tilde L^2 \rho_{k+1}^2 \| x_{k+1} - x_k\|^2 
    + 42 m^2 \tilde C_{\nabla c}^2 \tilde C_c^2 |\rho_{k+1} - \rho_k|^2 \notag\\
    &\quad+6\alpha_{k+1}^2 \left(\sigma_f^2 + 2m^2\rho_k^2\left(C_c^2 \sigma^2_{\nabla c} + \tilde C_{\nabla c}^2 \sigma_c^2 \right)\right).\label{eq: grl4}
\end{align}
In~\eqref{eq: grl4}, dividing all terms by $72\tilde L^2 \rho_{k+1}^2 \eta_k$  gives
\begin{align}
   \frac{1}{72\tilde L^2 \rho_{k+1}^2 \eta_k}\mathbb{E}_k \|g_{k+1} - \nabla Q_{\rho_{k+1}}(x_{k+1})\|^2 &\leq \frac{(1-\alpha_{k+1})^2}{72\tilde L^2 \rho_{k+1}^2 \eta_k}\|g_k - \nabla Q_{\rho_{k}}(x_{k})\|^2 \notag\\
    &\quad+\frac{7}{18\eta_k} \| x_{k+1} - x_k\|^2 
    + \frac{7 m^2 \tilde C_{\nabla c}^2 \tilde C_c^2}{12\tilde L^2 \rho_{k+1}^2 \eta_k} |\rho_{k+1} - \rho_k|^2 \notag\\
    &\quad+\frac{\alpha_{k+1}^2}{12\tilde L^2 \rho_{k+1}^2 \eta_k} \left(\sigma_f^2 + 2m^2\rho_k^2\left(C_c^2 \sigma^2_{\nabla c} + \tilde C_{\nabla c}^2 \sigma_c^2 \right)\right).\label{eq: vcb3}
\end{align}
We focus on the first term on the right-hand side and will show next
\begin{equation}
\frac{(1-\alpha_{k+1})^2}{72\tilde L^2 \rho_{k+1}^2 \eta_k}\|g_k - \nabla Q_{\rho_{k}}(x_{k})\|^2 \leq \left(\frac{1}{72\tilde L^2 \rho_{k}^2 \eta_{k-1}}-\eta_k\right) \|g_k - \nabla Q_{\rho_{k}}(x_{k})\|^2.\label{eq: gwq3}
\end{equation}
By the definitions of $\alpha_{k+1}, \eta_k, \rho_{k+1}$, we have that $\frac{-\alpha_{k+1}}{72\tilde L^2 \rho_{k+1}^2 \eta_k} = -\eta_k$ therefore~\eqref{eq: gwq3} follows after showing that
\begin{equation}
\frac{1-\alpha_{k+1}+\alpha_{k+1}^2}{72\tilde L^2 \rho_{k+1}^2 \eta_k} \leq \frac{1}{72\tilde L^2 \rho_k^2 \eta_{k-1}} \iff \frac{1}{ \rho_{k+1}^2 \eta_k} - \frac{1}{ \rho_k^2 \eta_{k-1}} \leq \frac{\alpha_{k+1}(1-\alpha_{k+1})}{ \rho_{k+1}^2 \eta_k}.  \label{eq: hrk3}
\end{equation}
Note that by definitions of $\eta_k$ and $\rho_k$, we have that 
\[
 \rho_{k+1}^2 \eta_k =  \rho^2 (k+1)^{2/5}\frac{1}{9\tilde L \rho (k+1)^{3/5}} = \frac{ \rho}{ 9\tilde L (k+1)^{1/5}}.
\]
By substituting the values of $ \rho_{k+1}^2\eta_k$ and $\alpha_{k+1}=72 \tilde L^2 \rho_{k+1}^2\eta_k^2=\frac{72}{81(k+1)^{4/5}}$, we find that~\eqref{eq: hrk3} is equivalent to 
\begin{equation*}
\frac{9\tilde L}{\rho} \left( (k+1)^{1/5} - k^{1/5} \right) \leq \frac{9\tilde L \alpha_{k+1}(1-\alpha_{k+1})(k+1)^{1/5}}{\rho} \iff (k+1)^{1/5} - k^{1/5}  \leq  \alpha_{k+1}(1-\alpha_{k+1})(k+1)^{1/5}.
\end{equation*}
First, we note that $(k+1)^{1/5} - k^{1/5} \leq \frac{1}{5k^{4/5}}$ and also $(1-\alpha_{k+1}) = 1-\frac{72}{81(k+1)^{4/5}} \geq \frac{4}{10}$  for $k\geq 1$.
Therefore,~\eqref{eq: hrk3} will be implied by
\begin{equation*}
\frac{1}{5k^{4/5}} \leq \frac{288}{810(k+1)^{3/5}},
\end{equation*}
which holds for $k\geq 1$. Thus,~\eqref{eq: hrk3}  and consequently~\eqref{eq: gwq3} hold for $k\geq 1$. Using~\eqref{eq: gwq3} to bound the first term on the right-hand side of~\eqref{eq: vcb3} and taking total expectation gives the result.
\end{proof}

\subsection{One iteration inequality}\label{subsec: one_it_stoch}

\begin{replemma}{eq: one_it_ineq1}
Let the assumptions in~\eqref{eq: htr_eqs}, \eqref{eq: asp_gen2}, \eqref{eq: asp_gen3}, \eqref{eq: asp_gen4} hold and let the parameters of Algorithm~\ref{algorithm:stoc_lalm} be given as
\[
\eta_k = \frac{1}{9\tilde L \rho(k+1)^{3/5}}, \quad
\rho_k = \rho k^{1/5}, \quad \mbox{and} \;\; \alpha_{k+1} = 72 \tilde L^2 \rho_{k+1}^2 \eta_k^2 = \frac{72}{81} (k+1)^{-4/5},
\]
for some constant $\rho>1$ and $\tilde L^2 = 4\tilde L_{\nabla f}^2 + 4m^2(\tilde C_c^2\tilde L_{\nabla c}^2 + \tilde C_{\nabla c}^2 \tilde L_c^2)$. Then, we have that
\begin{align*}
 \frac{\eta_k}{72}\E \left[\dist^2(\nabla f(x_{k+1}) + \rho_{k}\nabla c(x_{k+1})^\top c({x_{k+1}}), - N_X(x_{k+1}))\right] \leq  \E[Y_{k}- Y_{k+1} + |Q_{\rho_k}(x_{k+1}) - Q_{\rho_{k+1}}(x_{k+1})|] + \mathcal{E}_{k+1},
\end{align*}
where 
\begin{equation*}
Y_{k+1} = Q_{\rho_{k+1}}(x_{k+1}) + \frac{1}{72\tilde L^2 \rho_{k+1}^2 \eta_k} \| g_{k+1} - \nabla Q_{\rho_{k+1}}(x_{k+1}) \|^2
\end{equation*}
and
\begin{equation*}
\mathcal{E}_{k+1} = \frac{7 m^2 \tilde C_{\nabla c}^2 \tilde C_c^2}{12\tilde L^2 \rho_{k+1}^2 \eta_k} |\rho_{k+1} - \rho_k|^2 +\frac{\alpha_{k+1}^2}{12\tilde L^2 \rho_{k+1}^2 \eta_k} \left(\sigma_f^2 + 2m^2\rho_k^2\left(C_c^2 \sigma^2_{\nabla c} + \tilde C_{\nabla c}^2 \sigma_c^2 \right)\right).
\end{equation*}
\end{replemma}
\begin{remark}\label{rem: cons1}
By the definitions of $\eta_k$, $\rho_k$, we have that the first term of $\mathcal{E}_{k+1}$ is $O(k^{-7/5})$, the second term of $\mathcal{E}_{k+1}$ is $O(k^{-1})$, therefore $\sum_{k=1}^K \mathcal{E}_{k+1} = O(\log (K+1))$.
\end{remark}
\begin{proof}
By descent lemma applied on $x\mapsto Q_{\rho_k}(x)$, we have (by denoting the Lipschitz constant of $\nabla Q_{\rho_k}(x)$ as $L_{\rho_k}= \rho_k(L_{\nabla f} + m(C_c L_{\nabla c} + C_{\nabla c} L_c))$),
\begin{align}
    Q_{\rho_k}(x_{k+1}) &\leq Q_{\rho_k}(x_k) + \langle \nabla Q_{\rho_k}(x_k), x_{k+1} - x_k \rangle + \frac{L_{\rho_k}}{2} \| x_{k+1} - x_k\|^2 \notag \\
    &= Q_{\rho_k}(x_k) + \langle g_k, x_{k+1} - x_k \rangle + \langle \nabla Q_{\rho_k}(x_k) - g_k, x_{k+1} - x_k \rangle + \frac{L_{\rho_k}}{2} \| x_{k+1} - x_k\|^2 \notag \\
    &\leq Q_{\rho_k}(x_k) + \langle g_k, x_{k+1} - x_k \rangle +  \left(\frac{L_{\rho_k}}{2} +\frac{1}{2\eta_k}\right) \| x_{k+1} - x_k\|^2 + \frac{\eta_k}{2} \|\nabla Q_{\rho_k}(x_k) - g_k\|^2, \label{eq: storm_eq1_1}
\end{align}
where we added and subtracted $\langle g_k, x_{k+1} - x_k\rangle$ for the equality and then used Young's inequality.

By the definition of $x_{k+1}$ in Algorithm \ref{algorithm:stoc_lalm} and $x_k \in X$, we have 
\begin{equation*}
    \langle x_{k+1} - x_k + \eta_k g_k, x_k - x_{k+1} \rangle \geq 0 \iff \langle g_k, x_{k+1} - x_{k} \rangle \leq - \frac{1}{\eta_k} \| x_k - x_{k+1} \|^2.
\end{equation*}
By using this estimate in~\eqref{eq: storm_eq1_1}, and then splitting the last term, we have
\begin{align*}
     Q_{\rho_k}(x_{k+1}) &\leq Q_{\rho_k}(x_k) +\left( -\frac{1}{\eta_k} + \frac{L_{\rho_k}}{2} + \frac{1}{2\eta_k} \right) \| x_{k+1} - x_k\|^2+ \frac{\eta_k}{2} \| \nabla Q_{\rho_k}(x_k)-g_k\|^2 \\
    &= Q_{\rho_k}(x_k) +\left( -\frac{1}{2\eta_k} + \frac{L_{\rho_k}}{2} \right) \| x_{k+1} - x_k\|^2 +\eta_k \| \nabla Q_{\rho_{k}}(x_k)-g_k\|^2 - \frac{\eta_k}{2} \| \nabla Q_{\rho_{k}}(x_k)-g_k\|^2.
\end{align*}
We take expectation on this inequality and then use Lemma~\ref{cor: storm_qp} to bound the expectation of the third term on the right-hand side to get 
\begin{align*}
    & \E Q_{\rho_k}(x_{k+1}) + \frac{1}{72\tilde L^2 \rho_{k+1}^2 \eta_k} \mathbb{E} \| g_{k+1} - \nabla Q_{\rho_{k+1}}(x_{k+1}) \|^2 \\
    &\leq \E Q_{\rho_k}(x_{k}) + \frac{1}{72\tilde L^2 \rho_{k}^2 \eta_{k-1}} \E \| g_{k} - \nabla Q_{\rho_{k}}(x_{k}) \|^2
    -\frac{\eta_k}{2} \E \| g_k - \nabla Q_{\rho_{k}}(x_k)\|^2 \\
    &\quad+ \left( \frac{L_{\rho_k}}{2} + \frac{7}{18\eta_k} - \frac{1}{2\eta_k} \right) \E \| x_{k+1} - x_k\|^2 
      + \frac{7 m^2 \tilde C_{\nabla c}^2 \tilde C_c^2}{12\tilde L^2 \rho_{k+1}^2 \eta_k} |\rho_{k+1} - \rho_k|^2 \\
    &\quad+\frac{\alpha_{k+1}^2}{12\tilde L^2 \rho_{k+1}^2 \eta_k} \left(\sigma_f^2 + m^2\rho_k^2\left(C_c^2 \sigma^2_{\nabla c} + \tilde C_{\nabla c}^2 \sigma_c^2 \right)\right)
\end{align*}
First, by the definition of $\eta_k$ and by $L_{\rho_k} \leq \tilde L_{\rho_k} \leq \tilde L_{\rho_{k+1}} = \tilde L \rho(k+1)^{1/5} \leq \tilde L \rho (k+1)^{3/5}$ which is due to~\eqref{eq: stoc_lips}, Jensen's inequality and $\mathbb{E}[\tilde \nabla Q_{\rho_k}(x, B_{k+1})] = \nabla Q_{\rho_k}(x)$, we have  that $\frac{ L_{\rho_k}}{2} \leq \frac{\tilde L{\rho}(k+1)^{3/5}}{2} = \frac{1}{18\eta_k}$ since $\tilde L{\rho}(k+1)^{3/5} = \frac{1}{9\eta_k}$. We consequently have $\frac{L_{\rho_k}}{2} + \frac{7}{18\eta_k} - \frac{1}{2\eta_k} \leq \frac{4}{9\eta_k} - \frac{1}{2\eta_k}\leq -\frac{1}{18\eta_k}$. 
We then add to both sides $Q_{\rho_{k+1}}(x_{k+1})$, use the definitions of $Y_k$ and $\mathcal{E}_k$ along with $\frac{\eta_k}{2}\geq\frac{\eta_k}{18}$ to get
\begin{equation}
\frac{\eta_k}{18} \mathbb{E}\left[\eta_k^{-2}\|x_{k+1} - x_k\|^2 + \|g_k - \nabla Q_{\rho_k}(x_k)\|^2  \right]\leq \mathbb{E}\left[ Y_{k}-Y_{k+1} + |Q_{\rho_k}(x_{k+1}) - Q_{\rho_{k+1}}(x_{k+1})|\right] + \mathcal{E}_{k+1}.\label{eq: spe4}
\end{equation}
We will now show that
\begin{equation}\label{eq: gir3}
    \dist^2(\nabla f(x_{k+1}) + \rho_{k}\nabla c(x_{k+1})^\top  c({x_{k+1}}), - N_X(x_{k+1})) \leq 4(\eta_k^{-2}\| x_{k+1} - x_k\|^2 + \| g_k - \nabla Q_{\rho_{k}}(x_k)\|^2).
\end{equation}
By the definition of $x_{k+1}$, we have
\begin{align*}
    & 0\in x_{k+1} - x_k + \eta_k g_k + \partial i_X(x_{k+1}) \\
    & \iff \eta_k^{-1}(x_k - x_{k+1}) + ( \nabla Q_{\rho_{k}}(x_k) - g_k) + (\nabla Q_{\rho_{k}}(x_{k+1}) - \nabla Q_{\rho_{k}}(x_k)) \in \partial i_X(x_{k+1}) + \nabla Q_{\rho_{k}}(x_{k+1}) \\
    & \iff \eta_k^{-1}(x_k - x_{k+1}) + ( \nabla Q_{\rho_{k}}(x_k) - g_k) + (\nabla Q_{\rho_{k}}(x_{k+1}) - \nabla Q_{\rho_{k}}(x_k)) \\
    & \hspace{3in} \in \partial i_X(x_{k+1}) + \nabla f(x_{k+1}) + \rho_{k} \nabla c(x_{k+1})^\top c(x_{k+1}),
\end{align*}
where the last step also used the definition of $\nabla Q_{\rho_k}(x_{k+1})$.
This gives
\begin{align*}
& \dist^2(\nabla f(x_{k+1}) + \rho_{k}\nabla c(x_{k+1})^\top c({x_{k+1}}), - N_X(x_{k+1})) \\
&\leq 3 \eta_k^{-2} \| x_{k+1} - x_k\|^2 + \| g_k - \nabla Q_{\rho_{k}}(x_k) \|^2 + \|\nabla Q_{\rho_{k}}(x_k)-\nabla Q_{\rho_{k}}(x_{k+1})\|^2 \\
&\leq 4 \eta_k^{-2} \| x_{k+1} - x_k\|^2 + \| g_k - \nabla Q_{\rho_{k}}(x_k) \|^2,
\end{align*}
where the last step is due to $\nabla Q_{\rho_{k}}$ being $L_{\rho_{k}}$-Lipschitz, $L_{\rho_{k}} \leq \tilde L_{\rho_{k}}\leq \tilde L_{\rho_{k+1}}$ and hence $L_{\rho_k}\leq \tilde L\rho(k+1)^{1/5}< 9\tilde L \rho(k+1)^{3/5}=\eta_k^{-1}$ by the definition of $\eta_k$. Using~\eqref{eq: gir3} on~\eqref{eq: spe4} and taking total expectation gives the result.
\end{proof}
\subsection{Controlling the change of penalty parameters}
Using variable penalty parameters allows us to remove assumptions on initialization that was done in~\cite{shi2022momentum} for solving a special case of our problem. To handle the effect of the change on penalty parameters we have the next lemma that uses~\eqref{eq: asp_gen4} and Lemma~\ref{eq: one_it_ineq1}.
\begin{lemma}\label{lem: sps2}
Let the assumptions in~\eqref{eq: htr_eqs}, \eqref{eq: asp_gen2}, \eqref{eq: asp_gen3}, \eqref{eq: asp_gen4} hold and let the parameters of Algorithm~\ref{algorithm:stoc_lalm} be given as
\[
\eta_k = \frac{1}{9\tilde L \rho(k+1)^{3/5}}, \quad
\rho_k = \rho k^{1/5}, \quad \mbox{and} \;\; \alpha_{k+1} = 72 \tilde L^2 \rho_{k+1}^2 \eta_k^2 = \frac{72}{81} (k+1)^{-4/5},
\]
for some constant $\rho>1$, we have that
\begin{align*}
&\sum_{k=1}^K \mathbb{E} |Q_{\rho_k}(x_{k+1}) - Q_{\rho_{k+1}}(x_{k+1})| \\
&\leq b Q_{\rho_1}(x_1) - \frac{b\underline Q}{(K+1)^{3/5}} + \frac{9b}{72\tilde L \rho } \| g_1 - \nabla Q_{\rho_1}(x_1)\|^2 \\
&\quad+ \sum_{k=1}^K \frac{b(B_f+\rho C_c^2)}{k(k+1)^{2/5}} + \sum_{k=1}^K \frac{b \mathcal{E}_{k+1}}{(k+1)^{3/5}} + \sum_{k=1}^K  \frac{b \rho C_c^2}{k^{4/5}(k+1)^{3/5}} + \sum_{k=1}^K \frac{2C_{\nabla f}^2}{5\rho\delta^2 k^{6/5}},
\end{align*}
where $b_k = \frac{648\cdot 8 \tilde L}{5\delta^2(k+1)^{3/5}}$, $b = \frac{648\cdot 8 \tilde L}{5\delta^2}$ and $\mathcal{E}_{k+1}$ is as given in Lemma~\ref{eq: one_it_ineq1}.
\end{lemma}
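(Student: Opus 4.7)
The plan is to control $D_k := \mathbb{E}|Q_{\rho_k}(x_{k+1})-Q_{\rho_{k+1}}(x_{k+1})|$ by reducing it to the stationarity measure of Lemma~\ref{eq: one_it_ineq1} via the regularity condition~\eqref{eq: asp_gen4}, breaking the resulting recursion, and telescoping over $k$. Since $Q_\rho(x)=f(x)+\frac{\rho}{2}\|c(x)\|^2$, one has immediately $|Q_{\rho_k}(x_{k+1})-Q_{\rho_{k+1}}(x_{k+1})|=\frac{\rho_{k+1}-\rho_k}{2}\|c(x_{k+1})\|^2$. Applying \eqref{eq: asp_gen4} at $x_{k+1}$, scaling by $\rho_k$ (using positive homogeneity of $\dist(\cdot,-N_X(x_{k+1}))$ since $N_X$ is a cone), and invoking the triangle inequality together with $\|\nabla f\|\le C_{\nabla f}$ gives
\[
\rho_k\delta\|c(x_{k+1})\|\le \dist(\nabla f(x_{k+1})+\rho_k\nabla c(x_{k+1})^\top c(x_{k+1}),-N_X(x_{k+1}))+C_{\nabla f}.
\]
Squaring with $(a+b)^2\le 2a^2+2b^2$ and inserting Lemma~\ref{eq: one_it_ineq1} to bound the expected squared distance yields the recursive inequality $D_k\le \gamma_k[\mathbb{E}(Y_k-Y_{k+1})+D_k+\mathcal{E}_{k+1}]+\beta_k C_{\nabla f}^2$, where $\gamma_k:=\tfrac{72(\rho_{k+1}-\rho_k)}{\rho_k^2\delta^2\eta_k}$ and $\beta_k:=\tfrac{\rho_{k+1}-\rho_k}{\rho_k^2\delta^2}$.

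The recursive $\gamma_k D_k$ on the right is the crux, since $\gamma_k$ is not uniformly below $1$ and transposing is not directly possible. I will bypass this by substituting the uniform estimate $\|c(x_{k+1})\|^2\le C_c^2$ from~\eqref{eq: asp_gen3}, giving $\gamma_k D_k\le \tfrac{\gamma_k(\rho_{k+1}-\rho_k)C_c^2}{2}$; this substitution is exactly what produces the summand $\sum_k \tfrac{b\rho C_c^2}{k^{4/5}(k+1)^{3/5}}$ in the target bound. Using $\rho_k=\rho k^{1/5}$, $\eta_k=\tfrac{1}{9\tilde L\rho(k+1)^{3/5}}$, and the elementary estimate $(k+1)^{1/5}-k^{1/5}\le \tfrac{1}{5k^{4/5}}$, a direct computation yields $\gamma_k\le b_k=\tfrac{648\cdot 8\tilde L}{5\delta^2(k+1)^{3/5}}$ (the factor $8$ absorbs the ratio $(k+1)^{3/5}/k^{6/5}$ for $k\ge 1$ up to an absolute constant). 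The remaining contributions $\sum_k\gamma_k\mathcal{E}_{k+1}$ and $\sum_k\beta_k C_{\nabla f}^2$ produce the last two summands of the target bound with matching asymptotic rates.

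For the telescoping piece $\sum_{k=1}^K\gamma_k\mathbb{E}(Y_k-Y_{k+1})$ I will use Abel summation
\[
\sum_{k=1}^K\gamma_k(Y_k-Y_{k+1})=\gamma_1 Y_1+\sum_{k=2}^K(\gamma_k-\gamma_{k-1})Y_k-\gamma_K Y_{K+1},
\]
and estimate the three pieces separately. The leading term uses $\gamma_1\le b$ together with the expansion $Y_1=Q_{\rho_1}(x_1)+\tfrac{1}{72\tilde L^2\rho_1^2\eta_0}\|g_1-\nabla Q_{\rho_1}(x_1)\|^2$, noting $\rho_1=\rho$ and the convention that makes $\tfrac{1}{72\tilde L^2\rho^2\eta_0}=\tfrac{9}{72\tilde L\rho}$, which recovers $bQ_{\rho_1}(x_1)+\tfrac{9b}{72\tilde L\rho}\|g_1-\nabla Q_{\rho_1}(x_1)\|^2$. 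The trailing term is handled by $-\gamma_K Y_{K+1}\le -\gamma_K\underline Q\le -b_K\underline Q$, valid under the WLOG assumption $\underline Q\le 0$, using $\gamma_K\le b_K$ and $Y_{K+1}\ge \underline Q$. The middle sum uses $(\gamma_k-\gamma_{k-1})\le 0$ with the uniform estimate $Y_k\le B_f+\tfrac{\rho_k C_c^2}{2}+(\text{variance term})$ from~\eqref{eq: asp_gen3}; since $\gamma_{k-1}-\gamma_k=O(k^{-8/5})$ and $Y_k$ grows as $O(\rho_k)=O(k^{1/5})$, each summand is $O(k^{-7/5})$, giving the $\sum_k \tfrac{b(B_f+\rho C_c^2)}{k(k+1)^{2/5}}$ series.

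The hard part is the recursive appearance of $D_k$ on both sides of the one-step inequality from Lemma~\ref{eq: one_it_ineq1}, which I defeat via the uniform bound $\|c\|\le C_c$ at the price of one additional summable series in the final estimate. A secondary subtlety is the Abel-summation bookkeeping that simultaneously produces the head $bY_1$ and the tail $-b\underline Q/(K+1)^{3/5}$ with the intermediate $B_f+\rho C_c^2$ contribution, and that gracefully accommodates the sign of $\underline Q$ via the $\gamma_K\le b_K$ bound at the terminal step.
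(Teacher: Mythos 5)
Your proposal matches the paper's own proof in all essential respects: the same reduction of $|Q_{\rho_k}(x_{k+1})-Q_{\rho_{k+1}}(x_{k+1})|$ to the squared stationarity measure via \eqref{eq: asp_gen4} plus the triangle and Young inequalities, the same use of the crude bound $\|c(x_{k+1})\|\le C_c$ to break the recursive appearance of the penalty-change term inside Lemma~\ref{eq: one_it_ineq1}, the same weights $b_k\asymp(k+1)^{-3/5}$ verified through $(k+1)^{1/5}-k^{1/5}\le \tfrac{1}{5}k^{-4/5}$, and the same Abel-summation/telescoping treatment of $\sum_k b_k(\mathbb{E}Y_k-\mathbb{E}Y_{k+1})$, which the paper writes in an equivalent rearranged form. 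The only cosmetic caveat is that the middle Abel sum requires the two-sided bound $|Q_{\rho_k}(x_k)|\le B_f+\rho_k C_c^2$ rather than a one-sided one (the nonnegative variance part of $Y_k$ can simply be dropped there since the weights are decreasing), which is exactly how the paper handles it.
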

\begin{remark}\label{rem: cons3}
In view of Remark~\ref{rem: cons1} and since $b_k=O(k^{-3/5})$, the right-hand side in this lemma is finite. 
\end{remark}
\begin{proof}
We start with the error term
\begin{align}\label{eq: fgr4}
|Q_{\rho_k}(x_{k+1}) - Q_{\rho_{k+1}}(x_{k+1})| = (\rho_{k+1} - \rho_{k}) \|c(x_{k+1})\|^2.
\end{align}
We have by the assumption in~\eqref{eq: asp_gen4} and triangle inequality that
\begin{align}
&\dist(\rho_{k}\nabla c(x_{k+1})^\top c(x_{k+1}), -N_X(x_{k+1})) \geq \rho_{k}\delta \| c(x_{k+1}) \| \notag \\
&\iff \|c(x_{k+1}) \| \leq \frac{1}{\rho_{k} \delta}\left( \| \nabla f(x_{k+1}) \| + \dist(\nabla f(x_{k+1}) + \rho_{k} \nabla c(x_{k+1})^\top c(x_{k+1}), -N_X(x_{k+1}) \right).\label{eq: bfh4}
\end{align}
Using this in~\eqref{eq: fgr4} gives
\begin{align} 
&|Q_{\rho_k}(x_{k+1}) - Q_{\rho_{k+1}}(x_{k+1})|\notag \\
&\leq \frac{2|\rho_k - \rho_{k+1}|}{\rho_{k}^2 \delta^2}\left( \| \nabla f(x_{k+1})\|^2 + \dist^2(\nabla f(x_{k+1}) + \rho_{k} \nabla c(x_{k+1})^\top c(x_{k+1}), -N_X(x_{k+1}) \right).\label{eq: hhp4}
\end{align}
As a result, we wish to bound 
\begin{align}
&\sum_{k=1}^K |Q_{\rho_k}(x_{k+1}) - Q_{\rho_{k+1}}(x_{k+1})| \notag \\
&\leq \sum_{k=1}^K \frac{2|\rho_k - \rho_{k+1}|}{\rho_{k}^2 \delta^2}\left( \| \nabla f(x_{k+1})\|^2 + \dist^2(\nabla f(x_{k+1}) + \rho_{k} \nabla c(x_{k+1})^\top c(x_{k+1}), -N_X(x_{k+1}) \right).\label{eq: xnw3}
\end{align}
For bounding the right-hand side of this inequality, we  use a crude bound that can be obtained by Lemma~\ref{eq: one_it_ineq1}.
By using~\eqref{eq: fgr4} with the uniform upper bound $\|c(x_{k+1})\|\leq C_c$ to bound the third term on the right-hand side of the result in Lemma~\ref{eq: one_it_ineq1}, we get
\begin{align}
 \frac{\eta_k}{72}\mathbb{E} \left[\dist^2(\nabla f(x_{k+1}) + \rho_{k}\nabla c(x_{k+1})^\top c({x_{k+1}}), - N_X(x_{k+1})) \right] \leq \mathbb{E}\left[ Y_{k}- Y_{k+1}\right] + \mathcal{E}_{k+1} + |\rho_k - \rho_{k+1}| C_c^2,\label{eq: sow4}
\end{align}
Let $b_k = \frac{648\cdot 8 \tilde L}{5\delta^2(k+1)^{3/5}}$ and $b=\frac{648\cdot 8}{5\delta^2}$.
First, note that for $k\geq 1$
\begin{equation*}
b_k\cdot \frac{\eta_k}{72} =\frac{8}{5\rho\delta^2(k+1)^{6/5}} \geq \frac{2}{5\rho\delta^2(k)^{6/5}} \geq \frac{2|\rho_k - \rho_{k+1}|}{\rho^2\delta^2k^{2/5} } = \frac{2|\rho_k-\rho_{k+1}|}{\rho_k^2\delta^2},
\end{equation*}
since $|\rho_{k+1} - \rho_k| \leq \frac{\rho}{5k^{4/5}}$ and $4(k)^{6/5}\geq (k+1)^{6/5}$ for $k\geq 1$.
Hence, after multiplying~\eqref{eq: sow4} by $b_k$, we get
\begin{align}
 &\frac{2|\rho_k - \rho_{k+1}|}{\rho_{k}^2 \delta^2}\mathbb{E}\left[\dist^2(\nabla f(x_{k+1}) + \rho_{k}\nabla c(x_{k+1})^\top c({x_{k+1}}), - N_X(x_{k+1}))\right]\notag  \\
 &\leq b_k(\mathbb{E} Y_{k}-\mathbb{E} Y_{k+1}) + b_k\mathcal{E}_{k+1} + b_k|\rho_k - \rho_{k+1}| C_c^2.\label{eq: sge3}
\end{align}
In view of~\eqref{eq: xnw3}, this gives
\begin{align}
 &\sum_{k=1}^K \mathbb{E}|Q_{\rho_k}(x_{k+1}) - Q_{\rho_{k+1}}(x_{k+1})|\notag\\
&\leq \sum_{k=1}^K\left(b_k(\mathbb{E} Y_{k}-\mathbb{E} Y_{k+1}) + b_k\mathcal{E}_{k+1} + b_k|\rho_k - \rho_{k+1}| C_c^2+\frac{2|\rho_k-\rho_{k+1}|C_{\nabla f}^2}{\rho_k^2\delta^2}\right),\label{eq: vcd3}
\end{align}
after also using $\|\nabla f(x_{k+1})\|^2 \leq C_{\nabla f}^2$.
By the definitions in Lemma~\ref{eq: one_it_ineq1}, we have $b_k\mathcal{E}_{k+1}= O\left(\frac{1}{k^{8/5}}\right)$, $b_k|\rho_k-\rho_{k+1}|=O\left(\frac{1}{k^{7/5}}\right)$, and $\frac{|\rho_k - \rho_{k+1}|}{\rho_k^2}=O\left(\frac{1}{k^{6/5}} \right)$. Hence, we now bound the term $b_k(\mathbb{E}Y_k-\mathbb{E}Y_{k+1})$. By the definition of $Y_{k+1}$ in Lemma~\ref{eq: one_it_ineq1}, we have
\begin{align}
    &\sum_{k=1}^K b_k(\mathbb{E}Y_k-\mathbb{E}Y_{k+1}) \notag\\
    &= \sum_{k=1}^K b_k \mathbb{E}\left[Q_{\rho_k}(x_k) - Q_{\rho_{k+1}}(x_{k+1})\right] \notag \\
    &\quad+ \sum_{k=1}^K b_k \mathbb{E}\left[ \frac{1}{72\tilde L^2 \rho_{k}^2 \eta_{k-1}} \| g_{k} - \nabla Q_{\rho_{k}}(x_{k}) \|^2- \frac{1}{72\tilde L^2 \rho_{k+1}^2 \eta_k} \| g_{k+1} - \nabla Q_{\rho_{k+1}}(x_{k+1}) \|^2 \right]\label{eq: sdw3}
\end{align}
First, we have
\begin{align*}
&\sum_{k=1}^K \frac{1}{(k+1)^{3/5}}\left(Q_{\rho_{k}}(x_k) - Q_{\rho_{k+1}}(x_{k+1})\right) \\
&= \sum_{k=1}^K \left(\frac{1}{k^{3/5}} Q_{\rho_{k}}(x_k) - \frac{1}{(k+1)^{3/5}}Q_{\rho_{k+1}}(x_{k+1}) \right)
+ \sum_{k=1}^K \left( \frac{1}{(k+1)^{3/5}}  - \frac{1}{k^{3/5}}\right) Q_{\rho_{k}}(x_{k}) \\
&\leq Q_{\rho_1}(x_1) - \frac{\underline Q}{(K+1)^{3/5}} + (B_f+\rho C_c^2) \sum_{k=1}^{K} \frac{1}{k(k+1)^{2/5}},
\end{align*}
where the last step is by $|Q_{\rho_{k+1}}(x_{k+1})| = |f(x_{k+1}) + \rho_{k+1} \| c(x_{k+1}\|^2| \leq B_f + \rho (k+1)^{1/5} C_c^2$ and $\left|\frac{1}{(k+1)^{3/5}} - \frac{1}{k^{3/5}} \right| \leq \frac{1}{(k+1)^{3/5} k}$ since $\frac{1}{k^{3/5}} - \frac{1}{(k+1)^{3/5}} = \frac{(k+1)^{3/5} - k^{3/5}}{(k+1)^{3/5} k ^{3/5}} = \frac{(k+1)^{3/5}k^{2/5} - k}{(k+1)^{3/5} k} \leq\frac{1}{(k+1)^{3/5} k}$ and $Q_{\rho}(x)\geq \underline Q > -\infty$ by \eqref{eq: asp_gen3}. After multiplying by $b$ and using $b_k = \frac{b}{(k+1)^{3/5}}$ on the last estimate gives
\begin{equation}\label{eq: pme3}
    \sum_{k=1}^K b_k\left(Q_{\rho_{k}}(x_k) - Q_{\rho_{k+1}}(x_{k+1})\right) \leq bQ_{\rho_1}(x_1) - \frac{\underline bQ}{(K+1)^{3/5}} + b(B_f+\rho C_c^2) \sum_{k=1}^{K} \frac{1}{k(k+1)^{2/5}}.
\end{equation}
Next, for the second term in~\eqref{eq: sdw3}, we have
\begin{align*}
&\sum_{k=1}^K \frac{1}{(k+1)^{3/5}}\left(\frac{1}{72\tilde L^2 \rho_{k}^2 \eta_{k-1}} \| g_{k} - \nabla Q_{\rho_{k}}(x_{k}) \|^2- \frac{1}{72\tilde L^2 \rho_{k+1}^2 \eta_k} \| g_{k+1} - \nabla Q_{\rho_{k+1}}(x_{k+1}) \|^2\right) \\
&\leq \sum_{k=1}^K \left( \frac{1}{k^{3/5}}\frac{1}{72\tilde L^2 \rho_{k}^2 \eta_{k-1}}\| g_{k} - \nabla Q_{\rho_{k}}(x_{k}) \|^2 -\frac{1}{(k+1)^{3/5}}\frac{1}{72\tilde L^2 \rho_{k+1}^2 \eta_{k}}\| g_{k+1} - \nabla Q_{\rho_{k+1}}(x_{k+1}) \|^2\right)\\
&\leq \frac{9}{72\tilde L \rho}\|g_1-\nabla Q_{\rho_1}(x_1)\|^2.
\end{align*}
Hence, after multiplying this estimate by $b$ and using $b_k = \frac{b}{(k+1)^{3/5}}$, we obtain
\begin{align}\label{eq: pme4}
    &\sum_{k=1}^K \frac{1}{(k+1)^{3/5}}\left(\frac{1}{72\tilde L^2 \rho_{k}^2 \eta_{k-1}} \| g_{k} - \nabla Q_{\rho_{k}}(x_{k}) \|^2- \frac{1}{72\tilde L^2 \rho_{k+1}^2 \eta_k} \| g_{k+1} - \nabla Q_{\rho_{k+1}}(x_{k+1}) \|^2\right) \notag \\
    &\leq \frac{9b}{72\tilde L \rho}\|g_1 - \nabla Q_{\rho_1}(x_1)\|^2.
\end{align}
We take expectations and then combine \eqref{eq: pme3} and \eqref{eq: pme4} in~\eqref{eq: sdw3} to have
\begin{align}
    \sum_{k=1}^K b_k(\mathbb{E}Y_k-\mathbb{E}Y_{k+1}) &\leq bQ_{\rho_1}(x_1) - \frac{\underline bQ}{(K+1)^{3/5}} + b(B_f+\rho C_c^2) \sum_{k=1}^{K} \frac{1}{k(k+1)^{2/5}} \notag \\&\quad+\frac{9b}{72\tilde L \rho}\|g_1 - \nabla Q_{\rho_1}(x_1)\|^2,
\end{align}
Using this estimate in \eqref{eq: vcd3} gives the result after also substituting the values of $\rho_k, b_k$.
\end{proof}

\subsection{Main theorem}
\begin{reptheorem}{eq: asb4}
Let the assumptions in~\eqref{eq: htr_eqs}, \eqref{eq: asp_gen2}, \eqref{eq: asp_gen3}, \eqref{eq: asp_gen4} hold.
Let
\[
\eta_k = \frac{1}{9\tilde L \rho(k+1)^{3/5}}, \quad
\rho_k = \rho k^{1/5}, \quad \mbox{and} \;\; \alpha_{k+1} = \frac{72}{81} (k+1)^{-4/5},
\]
for some constant $\rho>1$ and $\tilde{L}^2 = 4\tilde L_{\nabla f}^2 + 4m^2 (\tilde C_c^2 \tilde L_{\nabla c}^2 + \tilde C_{\nabla c}^2 \tilde L_c^2)$, we have that there exists $\lambda$ such that
\begin{align*}
    \mathbb{E}\left[\dist(\nabla f(x_{\hat k+1}) + \nabla c(x_{\hat k+1})^\top \lambda, -N_X(x_{\hat k +1}))\right] &\leq \varepsilon,\\
    \mathbb{E}\|c(x_{\hat k + 1})\|&\leq\varepsilon.
\end{align*}
with number of iterations bounded by $\tilde{O}(\varepsilon^{-5})$. 
\end{reptheorem}
\begin{proof}
We start by summing the one iteration inequality in Lemma~\ref{eq: one_it_ineq1}
\begin{align*}
 &\sum_{k=1}^K \frac{\eta_k}{72} \mathbb{E} \left[\dist^2(\nabla f(x_{k+1}) + \rho_{k}\nabla c(x_{k+1})^\top c({x_{k+1}}), - N_X(x_{k+1}))\right] \\
 &\leq \mathbb{E} Y_{1} + \sum_{k=1}^K |Q_{\rho_k}(x_{k+1}) - Q_{\rho_{k+1}}(x_{k+1})|  + \sum_{k=1}^K \mathcal{E}_{k+1}.
\end{align*}
We use that $\eta_k \geq \eta_K$ and divide both sides of the inequality by $K$ to derive
\begin{align*}
 &\frac{1}{K}\sum_{k=1}^K \mathbb{E} \left[\dist^2(\nabla f(x_{k+1}) + \rho_{k}\nabla c(x_{k+1})^\top c({x_{k+1}}), - N_X(x_{k+1}))\right] \\
 &\leq \frac{72}{K\eta_K} \Big(\mathbb{E} Y_{1} + \sum_{k=1}^K |Q_{\rho_k}(x_{k+1}) - Q_{\rho_{k+1}}(x_{k+1})| + \sum_{k=1}^K \mathcal{E}_{k+1}\Big).
\end{align*}
In view of Lemma~\ref{lem: sps2}, Remark~\ref{rem: cons1} and Remark~\ref{rem: cons3}, we have that the sums in the right-hand side are either finite or increase logarithmically in $K$ and since $\eta_K= O(K^{-3/5})$, we have
\begin{equation}\label{eq: foe3}
\frac{1}{K}\sum_{k=1}^K \mathbb{E} \left[\dist^2(\nabla f(x_{k+1}) + \rho_{k}\nabla c(x_{k+1})^\top c({x_{k+1}}), - N_X(x_{k+1}))\right]= O\left(\frac{\log (K+1)}{K^{2/5}}\right),
\end{equation}
along with $\|\nabla f(x)\|^2\leq C_{\nabla f}^2$ as per \eqref{eq: asp_gen3}.

This estimate along with $\rho_k=\rho k^{1/5}$ in ~\eqref{eq: bfh4} gives 
\begin{align*}
    \frac{1}{K}\sum_{k=1}^K\mathbb{E}\|c(x_{k+1}) \|^2 &\leq \frac{1}{K}\sum_{k=1}^K\frac{2}{\rho^2 k^{2/5}\delta^2}\mathbb{E}\left[ \| \nabla f(x_{k+1}) \|^2 + \dist^2(\nabla f(x_{k+1}) + \rho_{k} \nabla c(x_{k+1})^\top c(x_{k+1}), -N_X(x_{k+1}) \right]\\
    &= O\left(\frac{\log (K+1)}{K^{2/5}}\right),
\end{align*}
This inequality with \eqref{eq: foe3} gives
\begin{equation*}
    \frac{1}{K}\sum_{k=1}^K \mathbb{E} \left[\dist^2(\nabla f(x_{k+1}) + \rho_{k}\nabla c(x_{k+1})^\top c({x_{k+1}}), - N_X(x_{k+1})) + \| c(x_{k+1})\|^2\right] = O\left( \frac{\log (K+1)}{K^{2/5}} \right).
\end{equation*}
Hence the claims follow by using $\lambda = \rho_{\hat k} c(x_{\hat k+1})$ and Jensen's inequality.
\end{proof}

\section{Extensions}
Since the arguments in these parts are mostly the same as the previous section, the analyses in these two sections do not spell out all the details but mentions the changes compared to Section \ref{sec: qp_stoc_const_app}.
In this section, we will consider two extensions and show how they follow by minor adjustments on the analysis of the previous section.
\subsection{Dual variable updates}\label{subsec: ext1}
The algorithm in this case, written explicitly, is
\begin{subequations}
\begin{align}
    &x_{k+1} = P_{X}(x_k - \eta_k g_k) \label{eq: alg_vr2_step1}\\
    &\text{Sample }B_{k+1} = (\xi^0_{k+1}, \zeta^{1}_{k+1}, \zeta^{2}_{k+1}) \in \Xi \times \Zeta^{2} \text{~and set $\nabla Q_{\rho}(x, \lambda, B)$ as~\eqref{eq: sor12}} \\
    &g_{k+1} = \tilde\nabla Q_{\rho_{k+1}}(x_{k+1}, \lambda_{k+1}, B_{k+1}) + (1-\alpha_{k+1})(g_k - \tilde\nabla Q_{\rho_{k}}(x_{k}, \lambda_k, B_{k+1})),\\
    &\lambda_{k+2, i} = \lambda_{k+1, i} + \gamma_{k+1} \tilde c_i(x_{k+1}, \zeta^{2}_{k+1})~~\forall i=\{1,\dots,m\}\label{eq: alg_vr2_step4}
\end{align}
\end{subequations}
where we have
\begin{equation}\label{eq: sor12}
\tilde{\nabla}Q_{\rho}(x, \lambda, B) = \tilde{\nabla}f(x, \xi^{0}) + \sum_{i=1}^m \lambda_{i}\tilde \nabla c_i(x, \zeta^{1}) + \rho \sum_{i=1}^m \tilde \nabla c_{i}(x, \zeta^{1}) \tilde c_i(x, \zeta^{2}).
\end{equation}
\begin{reptheorem}{th: th_ext_1_main}
For the algorithm described in~\eqref{eq: alg_vr2_step1}-\eqref{eq: alg_vr2_step4} (as sketched in Section \ref{subsec: main_alm_stoc}), let 
\[
\eta_k = \frac{1}{9\tilde L \rho(k+1)^{3/5}}, \quad
\rho_k = \rho k^{1/5}, \quad \gamma_k = \frac{\gamma}{k(\log(k+1))^2|\tilde c_i(x_k, \zeta^{2}_k)|}, \quad \mbox{and} \;\; \alpha_{k+1} = 72 \tilde L^2 \rho_{k+1}^2 \eta_k^2 = \frac{72}{81} (k+1)^{-4/5},
\]
for some constant $\rho>1$ and $\tilde L^2 = \frac{3}{4} \tilde L_{\nabla f}^2 + \frac{3}{4}m^2(\|\lambda_1\|+ 4)^2\tilde L_{\nabla c}^2+\frac{3}{2}m^2(\tilde C^2\tilde L_{\nabla c}^2 + \tilde C^2_{\nabla c} \tilde L_c^2)$. Let also the assumptions in \eqref{eq: htr_eqs}, \eqref{eq: asp_gen2}, \eqref{eq: asp_gen3}, \eqref{eq: asp_gen4} hold. We have that there exists $\lambda$ such that
\begin{align*}
    \mathbb{E}\left[\dist(\nabla f(x_{\hat k+1}) +  \nabla c(x_{\hat k+1})^\top \lambda, -N_X(x_{\hat k +1}))\right] &\leq \varepsilon, \\
    \mathbb{E}\| c(x_{\hat k+1})\|^2 &\leq \varepsilon
\end{align*}
with number of iterations bounded by $\tilde{O}(\varepsilon^{-5})$. 
\end{reptheorem}
\begin{proof}
For convenience, let us denote $\tilde \gamma_{k} = \gamma_{k, i}|\tilde c_i(x_k, \zeta^{2}_k)|=\frac{\gamma}{k(\log(k+1))^2}$. 

$\bullet$ Modification of Lemma~\ref{cor: storm_qp}.

We apply Lemma~\ref{lem: storm_estimator_genform} with (\emph{cf.} Lemma~\ref{cor: storm_qp})
\begin{subequations}
\begin{align*}
G_{k}(x_k) &= \nabla Q_{\rho_k}(x_k, \lambda_k) = \nabla f(x_k) +\sum_{i=1}^m \nabla c_i(x_k)\lambda_{k, i} + \rho_k \sum_{i=1}^m \nabla c_i(x_k) c_i(x_k) , \\
G_{k+1}(x_{k+1}) &= \nabla Q_{\rho_{k+1}}(x_{k+1}, \lambda_{k+1}) = \nabla f(x_{k+1}) +\sum_{i=1}^m \nabla c_i(x_{k+1})\lambda_{k+1, i}+ \rho_{k+1} \sum_{i=1}^m \nabla c_i(x_{k+1}) c_i(x_{k+1}), \\
\tilde G_{k}(x_k, B_{k+1}) & = \tilde\nabla Q_{\rho_k}(x_k,\lambda_k, B_{k+1}) = \tilde \nabla f(x_k, \xi^0_{k+1}) + \sum_{i=1}^m\tilde c_i(
x_k, \zeta^{1})\lambda_{k, i} + \rho_k \sum_{i=1}^m \tilde \nabla c_i(x_k, \zeta^{1}_{k+1}) \tilde c_i(x_k, \zeta^{2}_{k+1}), \\
\tilde G_{k+1}(x_{k+1}, B_{k+1})  &= \tilde\nabla Q_{\rho_{k+1}}(x_{k+1},\lambda_{k+1}, B_{k+1})\\
&= \tilde \nabla f(x_{k+1}, \xi^0_{k+1}) + \sum_{i=1}^m \tilde \nabla c_i(x_{k+1}, \zeta^1) \lambda_{k+1, i} + \rho_{k+1} \sum_{i=1}^m \tilde \nabla c_i(x_{k+1}, \zeta^{1}_{k+1}) \tilde c_i(x_{k+1}, \zeta^{2}_{k+1}).
\end{align*}
\end{subequations}
where $\mathbb{E}_{B_{k+1}} \tilde G_k(x_k, B_{k+1}) = G_k(x_k)$ and $\mathbb{E}_{B_{k+1}} \tilde G_{k+1}(x_{k+1}, B_{k+1}) = G_{k+1}(x_{k+1})$ as before. We also define
\begin{align*}
    \tilde G_{k+1}(x_{k}, B_{k+1})  &= \tilde\nabla Q_{\rho_{k+1}}(x_{k},\lambda_{k+1}, B_{k+1})\\
&= \tilde \nabla f(x_{k}, \xi^0_{k+1}) + \sum_{i=1}^m \tilde \nabla c_i(x_{k}, \zeta^1) \lambda_{k+1, i} + \rho_{k+1} \sum_{i=1}^m \tilde \nabla c_i(x_{k}, \zeta^{1}_{k+1}) \tilde c_i(x_{k}, \zeta^{2}_{k+1}).
\end{align*}
As a result, the norm of dual vector $\lambda_k$ will affect the bounds in Lemma~\ref{cor: storm_qp}. First note that by the definition of $\gamma_k$, we have $\lambda_{k+1, i} = \lambda_{k, i} + \frac{\gamma}{k(\log(k+1))^2|\tilde c_i(x_{k}, \zeta^{2}_k)|}\tilde c_i(x_{k}, \zeta^{2}_k) $ and hence $\lambda_{k+1, i} = \lambda_{1, i} + \sum_{j=1}^k \frac{\gamma}{j(\log(j+1))^2|\tilde c_i(x_{j}, \zeta^{2}_j)|}\tilde c_i(x_{j}, \zeta^{2}_j)$ and hence
\begin{align}
    \|\lambda_{k+1}\|^2 &= \sum_{i=1}^m (\lambda_{k+1, i})^2 \notag \\
    &\leq \sum_{i=1}^m\left(2(\lambda_{1, i})^2 + 2\left|\sum_{j=1}^k\frac{\gamma}{j(\log(j+1))^2|\tilde c_i(x_j, \zeta_j^2)}\tilde c_i(x_j, \zeta_j^2)\right|^2 \right)\notag \\
    &\leq \sum_{i=1}^m (2(\lambda_{1, i})^2 + 32\gamma^2) \notag \\
    &\leq 2\|\lambda_1\|^2+32m\gamma^2,\label{eq: lpw2}
\end{align}
with $|\lambda_{k+1, i}|\leq |\lambda_{1, i}| + 4\gamma$,
where the second step in the inequality chain is by Young's inequality and the third by
\begin{align*}
\left| \sum_{j=1}^k \frac{\gamma}{j(\log(j+1))^2|\tilde c_i(x_{j}, \zeta^{2}_{j})|}\tilde c_i(x_{j}, \zeta^{2}_{j}) \right| &\leq \sum_{j=1}^k \left|\frac{\gamma}{j(\log(j+1))^2|\tilde c_i(x_{j}, \zeta^{2}_{j})|}\tilde c_i(x_{j}, \zeta^{2}_{j})\right| \\
&\leq 4\gamma.
\end{align*}
Note that instead of \eqref{eq: vpt4}, we now have
\begin{align}\label{eq: las2}
\mathbb{E}_k\| \tilde G_{k+1}(x_{k+1}, B_{k+1}) &- \tilde G_{k+1}(x_k, B_{k+1}) \|^2 \le 3\mathbb{E}_k \| \tilde \nabla f(x_{k+1}, \xi^0) - \tilde \nabla f(x_{k}, \xi^0) \|^2\notag \\
&\quad + 3\mathbb{E}_k\left\| \sum_{i=1}^m \left(\tilde \nabla c_i(x_{k+1}, \zeta^{1}_{k+1})-\tilde \nabla c_i(x_{k}, \zeta^{1}_{k+1})\right)\lambda_{k+1, i} \right\|^2 \notag \\
&\quad + 3\rho_{k+1}^2 \mathbb{E}_k\left\|  \sum_{i=1}^m \left(\tilde \nabla c_i(x_{k+1}, \zeta^{1}) \tilde c_i(x_{k+1}, \zeta^{2})  - \tilde \nabla c_i(x_{k}, \zeta^{1})\tilde c_i(x_{k}, \zeta^{2}) \right)\right\|^2.
\end{align}
Note also that, for the second term on the right-hand side of this inequality, we have
\begin{align}
    \mathbb{E}_k\left\| \sum_{i=1}^m \left(\tilde \nabla c_i(x_{k+1}, \zeta^{1}_{k+1})-\tilde \nabla c_i(x_{k}, \zeta^{1}_{k+1})\right)\lambda_{k+1, i} \right\| &\leq \sum_{i=1}^m \mathbb{E}_k\left\|  \left(\tilde \nabla c_i(x_{k+1}, \zeta^{1}_{k+1})-\tilde \nabla c_i(x_{k}, \zeta^{1}_{k+1})\right)\lambda_{k+1, i} \right\| \notag \\
    &\leq  (\|\lambda_1\|+4)\sum_{i=1}^m \mathbb{E}_k \|\tilde \nabla c_i(x_{k+1}, \zeta^{1}_{k+1})-\tilde \nabla c_i(x_{k}, \zeta^{1}_{k+1})\| \notag \\
    &\leq (\|\lambda_1\|+4)m\tilde L_{\nabla c}\|x_{k+1} - x_k\|^2,\label{eq: las3}
\end{align}
where the second inequality used $\max_i|\lambda_{k+1,i}|\leq(\|\lambda_1\|+4)$ and the last inequality used \eqref{eq: asp_gen2} with Jensen's inequality.

We reuse the estimation in Lemma \ref{cor: storm_qp} for the third term on the right-hand side of \eqref{eq: las2}, see \eqref{eq: uew3}. For the second term on the right-hand side of \eqref{eq: las2}, we use \eqref{eq: las3} and obtain (\emph{cf.} \eqref{eq: ngj4})
\begin{align}
&\mathbb{E}_k\| \tilde G_{k+1}(x_{k+1}, B_{k+1}) - \tilde G_{k+1}(x_k, B_{k+1}) \|^2 \notag \\
&\le \left(3\tilde L_{\nabla f}^2 + 3m^2\tilde L_{\nabla c}^2(\|\lambda_1\| + 4)^2  + 6\rho_{k+1}^2m^2\left( \tilde C_c^2\tilde L_{\nabla c}^2 + \tilde C_{\nabla c}^2 \tilde L_{c}^2\right)\right)  \| x_{k+1} - x_k\|^2 \notag \\&\leq  4\tilde L^2\rho_{k+1}^2 \| x_{k+1} - x_k\|^2,\label{eq: las5}
\end{align}
where 
\begin{equation*}
    \tilde L^2 = \frac{3}{4} \tilde L_{\nabla f}^2 + \frac{3}{4}m^2(\|\lambda_1\|+ 4)^2\tilde L_{\nabla c}^2+\frac{3}{2}m^2(\tilde C^2\tilde L_{\nabla c}^2 + \tilde C^2_{\nabla c} \tilde L_c^2).
\end{equation*}
Instead of \eqref{eq: ngj5} we have
\begin{align}
&\mathbb{E}_k\| \tilde G_{k+1}(x_k, B_{k+1}) - \tilde G_{k}(x_k, B_{k+1})\|^2 \notag \\
&\leq 2 \mathbb{E}_k \left\| (\rho_{k+1} - \rho_k) \sum_{i=1}^m \tilde \nabla c_i(x_k, \zeta^{1}) \tilde c_i(x_k, \zeta^{2})  \right\|^2 + 2\mathbb{E}_k\left\|\sum_{i=1}^m \nabla \tilde c_i(x_{k})(\lambda_{k+1, i} - \lambda_{k, i}) \right\|^2 \notag \\
&\leq 2m^2 \tilde C_{\nabla c}^2 \tilde C_c^2 |\rho_{k+1} - \rho_k|^2 + 2\tilde C_{\nabla c}^2 m^2 \tilde \gamma_k^2,\label{eq: las6}
\end{align}
where the last estimate is by \eqref{eq: asp_gen3} and $|\lambda_{k+1, i} - \lambda_{k, i}| = \tilde \gamma_k$ which is due to the definition of $\lambda_{k+1}$.

Moreover, instead of \eqref{eq: ngj6}, we have
\begin{align}
& \mathbb{E} \| G_{k}(x_k) - \tilde G_{k}(x_k, B_{k+1}) \|^2 \notag \\
& \le 3 \E \left\| \nabla f(x_k) - \tilde \nabla f(x_k, \xi^0) \right\|^2 + 3\left\| \sum_{i=1}^m(\tilde \nabla c_i(x_k, \zeta^2_{k+1})-\nabla c_i(x_k))\lambda_{k, i} \right\|^2 \notag \\
&\quad+ 3 \rho_k^2 \E \left\| \sum_{i=1}^m \nabla c_i(x_k) c_i(x_k)  - \sum_{i=1}^m \tilde \nabla c_i(x_k, \zeta^{1}) \tilde c_i(x_k, \zeta^{2})  \right\|^2 \notag \\
&\leq 3\sigma_f^2 + 3\sigma_{\nabla c}^2m^2(2\|\lambda_1\|^2 + 32m\gamma^2) + 6m^2\rho_k^2\left(C_c^2 \sigma^2_{\nabla c} + \tilde C_{\nabla c}^2 \sigma_c^2 \right),\label{eq: las7}
\end{align}
where we used \eqref{eq: lpw2} and \eqref{eq: asp_gen2} for the second term on the right-hand side and the estimation in \eqref{eq: ngj6} for the third term on the right-hand side.

By tracing the same calculations as Lemma~\ref{cor: storm_qp} (i.e., substituting \eqref{eq: las5}, \eqref{eq: las6}, \eqref{eq: las7} into \eqref{lem: storm_estimator_genform}, dividing by $72\tilde L^2 \rho_{k+1}^2 \eta_k$ and arguing the same way as \eqref{eq: gwq3} and the following estimations), we have
\begin{align*}
   \eta_k\mathbb{E}\|g_k - \nabla Q_{\rho_{k}}(x_{k}, \lambda_k)\|^2 &\leq \frac{1}{72\tilde L^2 \rho_{k}^2 \eta_{k-1}} \mathbb{E}\|g_k - \nabla Q_{\rho_{k}}(x_{k}, \lambda_k)\|^2 \notag \\
   &\quad- \frac{1}{72\tilde L^2 \rho_{k+1}^2 \eta_k}\mathbb{E} \|g_{k+1} - \nabla Q_{\rho_{k+1}}(x_{k+1}, \lambda_{k+1})\|^2 \notag\\
    &\quad+\frac{7}{18\eta_k} \mathbb{E}\| x_{k+1} - x_k\|^2 
    + \frac{7 m^2 \tilde C_{\nabla c}^2 \tilde C_c^2}{6\tilde L^2 \rho_{k+1}^2 \eta_k} |\rho_{k+1} - \rho_k|^2 + \frac{7\tilde \gamma_k^2m^2\tilde C_{\nabla c}^2}{6\tilde L^2\rho_{k+1}^2\eta_k}\\
&\quad+\frac{\alpha_{k+1}^2}{8\tilde L^2 \rho_{k+1}^2 \eta_k} \left(\sigma_f^2 + \sigma^2_{\nabla c}m^2(2\|\lambda_1\|^2+32m\gamma^2)+ 2m^2\rho_k^2\left(C_c^2 \sigma^2_{\nabla c} + \tilde C_{\nabla c}^2 \sigma_c^2 \right) \right).
\end{align*}
Note that the additions compared to Lemma~\ref{cor: storm_qp} are the constants in the last term and the fifth term (and as described above $\tilde L$ is different in this case). The fifth term is summable thanks to the definition of $\tilde\gamma_k$ hence the order of the bound is the same.

$\bullet$ Modification of Lemma~\ref{eq: one_it_ineq1}.

In Lemma~\ref{eq: one_it_ineq1}, the only change is that in addition to changing the penalty parameter, we also have to take into account the change in dual variable and also the effect of dual variable size on the Lipschitz constant. The latter is already reflected in the definition of $\tilde L$ earlier in this section.
For changing the dual variable, note that
\begin{align*}
|Q_{\rho_k}(x_{k+1}, \lambda_{k+1}) - Q_{\rho_k}(x_{k+1}, \lambda_{k})| &= \left|\sum_{i=1}^m (\lambda_{k+1, i} - \lambda_{k, i}) c_i(x_{k+1})\right|
\leq C_c \sum_{i=1}^m |\lambda_{k+1, i} - \lambda_k| \notag \\
&\leq mC_c\tilde \gamma_k,
\end{align*}
where we used triangle inequality, \eqref{eq: asp_gen3}, and the definition of $\lambda_{k+1, i}$ that gives $|\lambda_{k+1, i} - \lambda_{k, i}|=\gamma_k$.

Consequently, the result of Lemma~\ref{eq: one_it_ineq1} becomes
\begin{align}
&\frac{\eta_k}{72}\E \dist^2(\nabla f(x_{k+1}) + \nabla c(x_{k+1})^\top \lambda_k + \rho_{k}\nabla c(x_{k+1})^\top c({x_{k+1}}), - N_X(x_{k+1})) \notag \\
&\leq  \E[Y_{k}- Y_{k+1} + |Q_{\rho_k}(x_{k+1}, \lambda_{k+1}) - Q_{\rho_{k+1}}(x_{k+1},\lambda_{k+1})| ] + m C_c \tilde \gamma_k + \mathcal{E}_{k+1},\label{eq: boe3}
\end{align}
where 
\begin{equation*}
Y_{k+1} = Q_{\rho_{k+1}}(x_{k+1}, \lambda_{k+1}) + \frac{1}{72\tilde L^2 \rho_{k+1}^2 \eta_k} \| g_{k+1} - \nabla Q_{\rho_{k+1}}(x_{k+1}, \lambda_{k+1}) \|^2
\end{equation*}
and
\begin{align*}
    \mathcal{E}_{k+1} &= \frac{7 m^2 \tilde C_{\nabla c}^2 \tilde C_c^2}{6\tilde L^2 \rho_{k+1}^2 \eta_k} |\rho_{k+1} - \rho_k|^2 + \frac{7\gamma_k^2 m^2 \tilde C_{\nabla c}^2}{6\tilde L^2 \rho_{k+1}^2\eta_k} \\
    &\quad +\frac{\alpha_{k+1}^2}{8\tilde L^2 \rho_{k+1}^2 \eta_k} \left(\sigma_f^2 + \sigma^2_{\nabla c}m^2(2\|\lambda_1\|^2+32m\gamma^2)+ 2m^2\rho_k^2\left(C_c^2 \sigma^2_{\nabla c} + \tilde C_{\nabla c}^2 \sigma_c^2 \right) \right).
\end{align*}
An important remark here is that the order of the dominant term in $\mathcal{E}_k$ is still $O\left(\frac{1}{k}\right)$ as before.

$\bullet$ Modification of Lemma~\ref{lem: sps2}. 

In this case, the dual variable update will change two estimations. First is~\eqref{eq: hhp4} where we will now have
\begin{align}
&|Q_{\rho_k}(x_{k+1}, \lambda_{k+1}) - Q_{\rho_{k+1}}(x_{k+1}, \lambda_{k+1})| \notag \\
&\leq \frac{3|\rho_k - \rho_{k+1}|}{\rho_{k}^2 \delta}\big( \| \nabla f(x_{k+1})\|^2 + C_c^2 \| \lambda_k\|^2 \notag \\
&\quad+ \dist^2(\nabla f(x_{k+1}) +\nabla c(x_{k+1})^\top \lambda_k + \rho_{k} \nabla c(x_{k+1})^\top c(x_{k+1}), -N_X(x_{k+1}) \big),
\end{align}
where we note that $\|\lambda_k\|^2$ is finite as per \eqref{eq: lpw2} and hence do not change the order of the dominant terms in this bound.

The second is~\eqref{eq: sow4} where now we will have, in view of \eqref{eq: boe3}, that
\begin{align}
&\frac{\eta_k}{72}\mathbb{E} \dist^2(\nabla f(x_{k+1}) + \nabla c(x_{k+1})^\top \lambda_k  + \rho_{k}\nabla c(x_{k+1})^\top c({x_{k+1}}), - N_X(x_{k+1})) \notag \\
&\leq \mathbb{E}\left[ Y_{k}- Y_{k+1}\right] + \mathcal{E}_{k+1} + |\rho_k - \rho_{k+1}| C_c^2
 + mC_c\tilde \gamma_k ,\label{eq: asq3}
\end{align}
where the additional error term $\tilde \gamma_k$ is summable by definition and the rest of the proof of Lemma~\ref{lem: sps2} would be the same to get $\sum_{k=1}^K |Q_{\rho_k}(x_{k+1}, \lambda_{k+1}) - Q_{\rho_{k+1}}(x_{k+1}, \lambda_{k+1})|=O(1)$.

By combining these modified results as in Theorem~\ref{eq: asb4} we deduce the result.
\end{proof}

\subsection{Deterministic functional constraints}\label{subsec: ext2}
In this section, we consider the case when the constraints are not given in the expectation form. In this case, we will set the parameters accordingly to get the complexity $\tilde O(\varepsilon^{-4})$.

\begin{reptheorem}{th: th_determ_nonl}
For Algorithm~\ref{algorithm:stoc_lalm}, set
\begin{align}
\eta_k &= \frac{1}{9\tilde L \rho(k+1)^{1/2}}, \quad
\rho_k = \rho k^{1/4}, \notag \\
\alpha_{k+1} &=  \frac{72}{81(k+1)^{1/2}},\notag
\end{align}
for some  $\rho>1$ and $\tilde{L}^2 = 4\tilde L_{\nabla f}^2 + 4m^2 (\tilde C_c^2 \tilde L_{\nabla c}^2 + \tilde C_{\nabla c}^2 \tilde L_c^2)$. Let the assumptions in \eqref{eq: htr_eqs}, \eqref{eq: asp_gen2}, \eqref{eq: asp_gen3}, \eqref{eq: asp_gen4} hold with a deterministic $c(x)$. We have that there exists $\lambda$ such that
\begin{align*}
    \mathbb{E}\left[\dist(\nabla f(x_{\hat k+1}) +  \nabla c(x_{\hat k+1})^\top \lambda, -N_X(x_{\hat k +1}))\right] &\leq \varepsilon,\\
    \mathbb{E}\|c(x_{\hat k + 1})\|&\leq\varepsilon,
\end{align*}
with number of iterations bounded by $\widetilde{O}(\varepsilon^{-4})$.
\end{reptheorem}
\begin{proof}
Since the orders of the parameter choices differ in this case, we will show the changes in the analysis of Section~\ref{sec: qp_stoc_const}.

First, in Lemma~\ref{cor: storm_qp}, the main change will be that we use full gradients for the constraints. In particular, we have
\begin{align*}
G_{k}(x) &= \nabla Q_{\rho_k}(x) = \nabla f(x) + \rho_k \sum_{i=1}^m c_i(x) \nabla c_i(x),\\
\tilde G_{k}(x, \xi) &= \tilde \nabla Q_{\rho_k}(x, \xi) = \tilde \nabla f(x, \xi) + \rho_k \sum_{i=1}^m c_i(x) \nabla c_i(x).
\end{align*}
As a result, the main change will be in the variance term, i.e.:
\begin{equation*}
\mathbb{E} \| G_{k}(x_k) - \tilde G_{k}(x_k, \xi_{k+1}) \|^2 = \mathbb{E} \| \nabla f(x) - \tilde \nabla f(x, \xi) \|^2 \leq \sigma_f^2.
\end{equation*}

Since in this case, we have $72\tilde L^2 \rho_{k+1}^2 \eta_k = \frac{72\tilde L \rho}{9}$, it follows that~\eqref{eq: gwq3} holds by $\alpha_{k+1} \leq 1$ due to $1-\alpha_{k+1} + \alpha_{k+1}^2 \leq 1$ (note that this is sufficient as per \eqref{eq: hrk3} due to the term $72\tilde L^2\rho_{k+1}^2\eta_k$ being independent of $k$ in this case). This gives, instead of the result of Lemma~\ref{cor: storm_qp}, that
\begin{align}
   \eta_k\mathbb{E}\|g_k - \nabla Q_{\rho_{k}}(x_{k})\|^2 &\leq \frac{1}{72\tilde L^2 \rho^2 } \mathbb{E}\|g_k - \nabla Q_{\rho_{k}}(x_{k})\|^2 - \frac{1}{72\tilde L^2 \rho^2 }\mathbb{E} \|g_{k+1} - \nabla Q_{\rho_{k+1}}(x_{k+1})\|^2 \notag\\
    &\quad+\frac{7}{18\eta_k} \| x_{k+1} - x_k\|^2 
    + \frac{7 m^2 \tilde C_{\nabla c}^2 \tilde C_c^2}{12\tilde L^2 \rho^2} |\rho_{k+1} - \rho_k|^2 +\frac{\alpha_{k+1}^2\sigma_f^2}{12\tilde L^2 \rho ^2}.
\end{align}
The numerical estimations in Lemma~\ref{eq: one_it_ineq1} are true with the new parameters since we still have $ L_{\rho_k} \leq \tilde L_{\rho_k} \leq \tilde L_{\rho_{k+1}} \leq \tilde L \rho_{k+1}(k+1)^{1/4}=\frac{1}{9\eta_k}$ and hence $\frac{L_{\rho_k}}{2} + \frac{7}{18\eta_k} - \frac{1}{2\eta_k} \leq -\frac{1}{18\eta_k}$. Moreover, for the estimations at the end of the proof of Lemma~\ref{eq: one_it_ineq1}, we still have that $L_{\rho_k}\leq \eta_k^{-1}$ and as a result we have (\emph{cf.} the result of Lemma~\ref{eq: one_it_ineq1})
\begin{align*}
 \frac{\eta_k}{72}\E \dist^2(\nabla f(x_{k+1}) + \rho_{k}\nabla c(x_{k+1})^\top c({x_{k+1}}), - N_X(x_{k+1})) \leq  \E[Y_{k}- Y_{k+1} + |Q_{\rho_k}(x_{k+1}) - Q_{\rho_{k+1}}(x_{k+1})|] + \mathcal{E}_{k+1},
\end{align*}
where 
\begin{equation*}
Y_{k+1} = Q_{\rho_{k+1}}(x_{k+1}) + \frac{1}{72\tilde L^2 \rho^2} \| g_{k+1} - \nabla Q_{\rho_{k+1}}(x_{k+1}) \|^2
\end{equation*}
and
\begin{equation*}
\mathcal{E}_{k+1} = \frac{7 m^2 \tilde C_{\nabla c}^2 \tilde C_c^2}{12\tilde L^2 \rho^2} |\rho_{k+1} - \rho_k|^2 +\frac{\alpha_{k+1}^2\sigma_f^2}{12\tilde L^2 \rho^2}.
\end{equation*}
Note that as Remark \ref{rem: cons1}, we have that $\sum_{k=1}^K\mathcal{E}_{k+1}=O(\log (K+1))$ by the definitions of $\alpha_{k+1}$ and $\rho_k$ since $|\rho_k - \rho_{k+1}|\leq \frac{\rho}{4k^{3/4}}$ and $\alpha_k = \frac{72}{81(k+1)^{1/2}}$.

For Lemma~\ref{lem: sps2}, we use $b_k = \frac{ 72\cdot 18\tilde L}{\delta(k+1)^{3/4}}$ to have
\begin{equation*}
b_k \cdot \frac{\eta_k}{72} = \frac{2}{\rho \delta (k+1)^{5/4}} 
\geq \frac{1}{2\rho\delta(k)^{5/4}} 
\geq \frac{2|\rho_k - \rho_{k+1}|}{\rho_k^2\delta},
\end{equation*}
by also using $|\rho_k - \rho_{k+1}|\leq \frac{\rho}{4k^{3/4}}$ and $\rho_k = \rho k^{1/4}$. We note also that, in the same way as Lemma~\ref{lem: sps2}, we have $\left| \frac{1}{(k+1)^{3/4}} - \frac{1}{k^{3/4}} \right| \leq \frac{1}{(k+1)^{3/4}k}$. With these estimations and by repeating the same arguments as Lemma~\ref{lem: sps2}, we get
\begin{align*}
&\sum_{k=1}^K \mathbb{E} |Q_{\rho_k}(x_{k+1}) - Q_{\rho_{k+1}}(x_{k+1})| \\
&\leq b Q_{\rho_1}(x_1) - \frac{b\underline Q}{(K+1)^{3/4}} + \frac{b}{72\tilde L^2 \rho^2} \| g_1 - \nabla Q_{\rho_1}(x_1)\|^2 \\
&\quad+ \sum_{k=1}^K \frac{b(B_f+\rho C_c^2)}{k(k+1)^{1/2}} + \sum_{k=1}^K \frac{b \mathcal{E}_{k+1}}{(k+1)^{3/4}} + \sum_{k=1}^K  \frac{b \rho C_c^2}{k^{3/2}} + \sum_{k=1}^K \frac{2C_{\nabla f}^2}{4\rho\delta k^{5/4}},
\end{align*}
where $b_k = \frac{72\cdot 18\tilde L}{\delta(k+1)^{3/4}}$, $b = \frac{72\cdot 18\tilde L}{\delta}$. As $\mathcal{E}_k=O\left(\frac{1}{k}\right)$, the right-hand side on this inequality is finite. We can then combine these inequalities the same way as Theorem~\ref{eq: asb4} and use the definitions of $\eta_k = \frac{1}{9\tilde L \rho (k+1)^{1/2} }$ and $\rho_k=\rho k^{1/4}$ to get the result.
\end{proof}
\end{document}